\def    \C      {{\mathbb C}}
\def    \R      {{\mathbb R}}
\def \Z {{\mathbb Z}}
\renewcommand{\epsilon}{\varepsilon}
\newtheorem{theorem}{Theorem}[section]
\newtheorem{cor}[theorem]{Corollary}
\newtheorem{lemma}[theorem]{Lemma}
\newtheorem{prop}[theorem]{Proposition}
\newtheorem{rmk}[theorem]{Remark}
\newtheorem{defi}[theorem]{Definition}
\newtheorem*{conj}{Conjecture}
\newtheorem{quest}{Question}
\title{Elliptic Reeb orbit on some real projective three-spaces via ECH}
\date{}
\author{Brayan Ferreira}
\begin{document}
\maketitle

\begin{abstract}
We prove the existence of an elliptic Reeb orbit for some contact forms on the real projective three space $\R P^3$. The main ingredient of the proof is the existence of a distinguished pseudoholomorphic curve in the symplectization given by the $U$ map on ECH. Also, we check that the first value on the ECH spectrum coincides with the smallest action of null-homologous orbit sets for $1/4$-pinched Riemannian metrics and compute the ECH spectrum for the irrational Katok metric example.
\end{abstract}

\section{Introduction}
Given a $(2n-1)$ dimensional closed oriented manifold $Y$ equipped with a contact form $\lambda$, i.e., $\lambda \wedge d\lambda^{(n-1)}>0$, the \emph{Reeb vector field} $R$ is defined implicitly by the equations $d\lambda (R,\cdot) =0$ and $\lambda(R) \equiv 1$ on $Y$. The flow $\phi_t$ induced by $R$ is then called the \emph{Reeb flow} and a closed trajectory $\gamma \colon \R /T\Z \to Y$ for the Reeb flow is called a \emph{Reeb orbit}. There is a famous conjecture due to Weinstein \cite{WEINSTEIN1979353} asserting that every contact manifold $(Y,\lambda)$ admits a Reeb orbit. Although the Weinstein conjecture is still open in full generality, it has been proved in some cases, the most general one is the positive answer due to Taubes in dimension $3$ \cite{taubes2007seiberg}. In this paper, our focus will be on the case where $n=2$.

We denote by $\xi =\ker \lambda \subset TY$ the two plane distribution defined by $\lambda$, namely the \emph{contact structure}. For a Reeb orbit $\gamma \colon \R /T\Z \to Y$, one defines the linearized Poincaré map $P_\gamma := d\phi_T|_\xi \colon \xi_{\gamma(0)} \to \xi_{\gamma(0)}$. Since $P_\gamma$ is a symplectic linear map, its eigenvalues are inverse to each other. We say that $\gamma$ is \emph{elliptic} if the eigenvalues of $P_\gamma$ are norm one complex numbers and \emph{irrationally elliptic} if their arguments as complex numbers are irrational. The Reeb orbit $\gamma$ is \emph{positive (resp. negative) hyperbolic} if these eigenvalues are positive (resp. negative) real numbers. Moreover, $\gamma$ is \emph{nondegenerate} if $P_\gamma$ does not admit $1$ as an eigenvalue and $\lambda$ is a nondegenerate contact form if every Reeb orbit on $(Y,\lambda)$ is nondegenerate.

The Embedded Contact Homology (ECH) is an algebraic invariant of closed contact $3$-manifolds in which has shown to be very useful to understand symplectic embeddings in dimension $4$ and Reeb dynamics in dimension $3$. In a nutshell, ECH is a homology generated by suitable sets of Reeb orbits on $(Y,\lambda)$ such that differential counts some punctured pseudoholomorphic curves in the symplectization $(\R \times Y, d(e^s\lambda))$ which asymptotically converges (near the punctures) to the chain complex generators, as reviewed in Section \ref{secech}.  In fact, it is a fundamental tool in the proof of $3d$ Weinstein conjecture and some of its refinements, see e.g. \cite{hutchings2010taubes,cristofaro2016one,cristofaro2019torsion}.  In \cite{cristofaro2019torsion}, Cristofaro-Gardiner, Hutchings and Pomerleano used the ECH structure and its $U$ map to prove the existence of global surfaces of section and, applying a result due to Franks \cite[Theorem 4.4]{franks1996area}, they proved the following improvement of $3d$ Weinstein conjecture.

\begin{theorem}[Theorem 1.4 in \cite{cristofaro2019torsion}]\label{2orinfty}
Let $Y$ be a closed connected three-manifold and let $\lambda$ be a nondegenerate contact form on $Y$ . Assume that $c_1(\xi)\in H^2(Y;\Z)$ is torsion. Then $\lambda$ has either two or infinitely many simple Reeb orbits.
\end{theorem}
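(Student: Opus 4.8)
The plan is to prove the sharper statement that such a $\lambda$ possessing only \emph{finitely many} simple Reeb orbits must have exactly two of them; since ``infinitely many'' is the only other possibility, this yields the dichotomy. The argument has three stages: (1) Taubes' theorem shows that ECH is ``large'' in a manner detected by the $U$ map; (2) under the finiteness hypothesis, one converts the $U$ map into a distinguished pseudoholomorphic curve in the symplectization whose projection to $Y$ is a global surface of section; (3) one runs the first-return map of that surface of section through Franks' theorem. \emph{Stage 1 (ECH is large).} Since $c_1(\xi)$ is torsion, $\mathrm{ECH}_*(Y,\lambda,0)$ carries a relative $\Z$-grading and a degree $-2$ operator $U$, and by Taubes' isomorphism \cite{taubes2007seiberg} between ECH and Seiberg--Witten Floer cohomology --- whose structure for a torsion spin-c structure makes $U$ an isomorphism along an infinite tower of gradings --- the group $\mathrm{ECH}_*(Y,\lambda,0)$ is infinitely generated and carries an infinite \emph{$U$-sequence}: nonzero classes $\zeta_0,\zeta_1,\zeta_2,\dots$ with $U\zeta_{k+1}=\zeta_k$ for all $k$. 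Now impose finiteness. For every $L>0$ there are only finitely many ECH generators of symplectic action $\le L$ --- there are finitely many embedded Reeb orbits and a generator of action $\le L$ has total multiplicity at most $L/\min_\gamma \mathcal{A}(\gamma)$ --- so filtered ECH, $\mathrm{ECH}^L(Y,\lambda,0)$, is finite-dimensional; as the $\zeta_k$ are pairwise distinct nonzero classes, their spectral numbers satisfy $c_{\zeta_k}\to\infty$, so the $U$ map links classes of unboundedly large action.

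\emph{Stage 2 (the distinguished curve and the global surface of section).} Fix a generic symplectization-admissible almost complex structure $J$ on $\R\times Y$. The $U$ map is induced by the chain map $U_z$ that counts, for a fixed generic point $(0,z)\in\R\times Y$, $J$-holomorphic curves of ECH index $2$ passing through $(0,z)$, and $U_z$ is chain-homotopic to $U_{z'}$ for any two choices of point. Combining this with Stage 1 --- so that the curves realizing the relations $U\zeta_{k+1}=\zeta_k$ must involve orbit sets of action tending to $\infty$ --- and letting the marked point range over $Y$, equivalently pushing it toward an end of the symplectization, one applies SFT/Gromov compactness together with Hutchings' structure theory for curves of low ECH index (embeddedness, genus $0$, and control of the positive and negative ends through the ECH index inequality) to extract a single embedded genus-$0$ $J$-holomorphic curve $C\subset\R\times Y$ whose image in $Y$, with its asymptotic ends removed, is an embedded compact surface $\Sigma\subset Y$ that is transverse to $R$ on $\mathrm{int}\,\Sigma$, has $\partial\Sigma$ a union of Reeb orbits, and meets every Reeb trajectory not contained in $\partial\Sigma$: a global surface of section, as constructed in \cite{cristofaro2019torsion}. \textbf{This stage is the crux and the main obstacle.} One must rule out that the compactness limit degenerates into a multi-level holomorphic building or acquires multiply covered components, and must show that the surviving curve projects to a genuinely embedded surface transverse to $R$; this is precisely where the ECH index inequality, positivity of intersections in dimension four, and obstruction-bundle-gluing-type arguments enter, and it is the technical heart of \cite{cristofaro2019torsion}.

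\emph{Stage 3 (Franks' theorem).} Let $\phi$ be the first-return map of the Reeb flow on $\mathrm{int}\,\Sigma$; it preserves the area form $d\lambda|_{\mathrm{int}\,\Sigma}$ and extends to a homeomorphism of a neighbourhood of each end. The simple Reeb orbits of $\lambda$ are exactly the boundary orbits of $\Sigma$ together with the periodic points of $\phi$. Because $\Sigma$ has genus $0$, one of two things happens: if $\Sigma$ is not a disk or an annulus, or if $\phi$ has any periodic point beyond those forced by the topology, then Franks' theorem for area-preserving homeomorphisms of open genus-zero surfaces --- \cite[Theorem 4.4]{franks1996area} applied on an annular end or sub-annulus, together with its disk/plane companions and the Brouwer translation theorem --- shows that $\phi$ has infinitely many periodic points, hence $\lambda$ has infinitely many simple Reeb orbits, contradicting the finiteness hypothesis; otherwise $\Sigma$ is a disk or an annulus and $\phi$ has no surplus periodic points, so the simple Reeb orbits of $\lambda$ are just the at most two boundary orbits of $\Sigma$ plus at most the single fixed point forced in the disk case --- at most two altogether. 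Finally, every contact form on a closed three-manifold has at least two simple Reeb orbits \cite{cristofaro2016one}, so the count is exactly two, which proves the theorem.
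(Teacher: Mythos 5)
This theorem is not proved in the paper under review; it is quoted verbatim from \cite{cristofaro2019torsion} and used as an input, so there is no proof in the paper to compare against. Your outline does accurately describe the overall strategy of the cited work, and Stages~1 and~3 are essentially correct as sketches, with the minor caveats that the Taubes isomorphism is from \cite{taubes2010embedded,taubes2010embeddedV} rather than \cite{taubes2007seiberg}, and that the existence of the infinite $U$-sequence is itself a nontrivial structural fact about Seiberg--Witten Floer cohomology for a torsion spin-c structure that requires justification.

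However, as a proof of the statement your proposal has a genuine gap which you yourself flag: Stage~2, the passage from the $U$-sequence to a specific embedded index-$2$ curve whose projection is a global surface of section, is labeled ``the crux and the main obstacle'' and then described rather than argued. That step is precisely the content of \cite{cristofaro2019torsion}. One must show that, when there are only finitely many simple Reeb orbits, at least one of the (infinitely many) $U$-relations is witnessed by a curve $C$ satisfying the hypotheses of Proposition~\ref{gss}, namely $g(C)=h_+(C)=0$, $\operatorname{ind}(C)=2$, the conditions on the asymptotic ends, and compactness of $\mathcal{M}^J_C/\R$. Ruling out multi-level SFT limits and multiply covered components uses the ECH index inequality, the relative adjunction formula and positivity of intersections; verifying $h_+(C)=0$ and the compactness of the moduli space requires further structural analysis of the orbit sets that can appear. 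Your remark about ``letting the marked point range over $Y$, equivalently pushing it toward an end of the symplectization'' also does not match the mechanism of the argument: the base point for the $U$ map stays fixed, and the issue is controlling holomorphic curves of unboundedly large action through that fixed point, not degeneration of the marked point. In short, this is a faithful road map of the cited proof, but not a proof.
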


This result was extended dropping the condition on the first Chern class $c_1(\xi)$ by Colin, Dehornoy and Rechtman in \cite[Theorem 1.1+Corollary 4.8]{colin2022existence} and, more recently, Cristofaro-Gardiner, Hryniewicz, Hutchings and Liu proved that the nondegeneracy condition in Theorem \ref{2orinfty} is not necessary, i.e., if $c_1(\xi)$ is torsion, there must exist two or infinitely many simple Reeb orbits on $(Y,\lambda)$, see \cite[Theorem 1.1]{cristofaro2023proof}. Moreover, in a previous work also using ECH tools, they completely described the case of a contact form admitting exactly two Reeb orbits. 

\begin{theorem}[Theorem 1.2 in \cite{cristofaro2021contact}]\label{2orbits}
Let $Y$ be a closed three-manifold, and let $\lambda$ be a contact form on $Y$ with exactly two simple Reeb orbits. Then $\lambda$ is nondegenerate and both Reeb orbits are irrationally elliptic. Furthermore, if $Y$ is a lens space, then $\lambda$ is dynamically convex, $\xi$ is tight and there is a direct relation between the contact volume of $Y$ and the periods of the simple Reeb orbits.
\end{theorem}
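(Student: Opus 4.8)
\medskip
\noindent\emph{Proof proposal.} Everything will be extracted from Taubes' isomorphism $ECH_*(Y,\lambda,\Gamma)\cong\widehat{HM}^{-*}(Y,\mathfrak s_\xi+\mathrm{PD}(\Gamma))$ together with the combinatorial rigidity forced by having only two simple orbits. Fix a torsion spin${}^c$ structure $\mathfrak s_0$ on $Y$ (one exists for every $Y$) and let $\Gamma_0\in H_1(Y;\Z)$ satisfy $\mathfrak s_\xi+\mathrm{PD}(\Gamma_0)=\mathfrak s_0$; then $ECH_*(Y,\lambda,\Gamma_0)$ is infinitely generated and its $U$ map is an isomorphism in all sufficiently high degrees, so this group is nonzero in every sufficiently large degree of a fixed parity. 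I will first work under the assumption that $\lambda$ is nondegenerate and afterwards argue that this is automatic.

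\medskip
\noindent\emph{The nondegenerate case.} Write $\gamma_1,\gamma_2$ for the two simple orbits and $T_i=\int_{\gamma_i}\lambda$ for their actions. When $\lambda$ is nondegenerate the ECH chain complex in the class $\Gamma_0$ is generated by the monomials $\gamma_1^{a_1}\gamma_2^{a_2}$ with $a_1[\gamma_1]+a_2[\gamma_2]=\Gamma_0$, subject to $a_i\le 1$ whenever $\gamma_i$ is hyperbolic, and the ECH index $I(\gamma_1^{a_1}\gamma_2^{a_2})$ is a fixed quadratic polynomial in $(a_1,a_2)$ whose quadratic part --- coming from the relative self-intersection term $Q$ and from the Conley--Zehnder sums --- is positive definite in every direction along which the corresponding multiplicity is unbounded. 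If either orbit is hyperbolic the admissible monomials form a family of dimension at most $1$, and the same happens if $[\gamma_1],[\gamma_2]$ are not both torsion; in either case the monomials of ECH index at most $R$ number only $O(\sqrt R)$, so they cannot occupy all but finitely many degrees of the given parity, contradicting the previous paragraph. Hence both $\gamma_1,\gamma_2$ are elliptic with torsion homology class. If the transverse rotation number $\theta_i$ of $\gamma_i$ were rational, say $\theta_i=p/q$, then $\gamma_i^{q}$ would have linearized Poincar\'e map equal to the identity on $\xi$ and thus be a degenerate Reeb orbit, contradicting nondegeneracy; therefore both orbits are irrationally elliptic. Finally, the volume property $c_k(Y,\lambda)^2/k\to 2\,\mathrm{vol}(Y,d\lambda)$ applied to the action sequence realized by the monomials --- whose $k$-th term behaves like $\sqrt{c\,k}$, with $c$ computed by a lattice-point count in the triangle $\{a_1T_1+a_2T_2\le L\}$ intersected with the relevant sublattice --- identifies $\mathrm{vol}(Y,d\lambda)$ with $T_1T_2$ times the order of $H_1(Y;\Z)$, which is the asserted direct relation.

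\medskip
\noindent\emph{The lens space refinements.} If $Y$ is a lens space then $\pi_1(Y)$ is finite cyclic, so an orbit is contractible iff it is null-homologous, and every contractible Reeb orbit is an iterate $\gamma_i^{\ell_i n}$ with $\ell_i$ the order of $[\gamma_i]$ in $H_1(Y;\Z)$; moreover $\mathrm{CZ}(\gamma_i^{\ell_i n})=2\lfloor n\widetilde{\theta}_i\rfloor+1$, where $\widetilde{\theta}_i$ is the transverse rotation number of the contractible orbit $\gamma_i^{\ell_i}$ with respect to a capping disk. The volume--period relation, together with the requirement that the monomials $\gamma_1^{a_1}\gamma_2^{a_2}$ exhaust the graded pieces of $\widehat{HM}$, forces $\widetilde{\theta}_i>1$, whence $\mathrm{CZ}(\gamma_i^{\ell_i n})\ge 3$ for all $n\ge 1$; as every contractible orbit is of this form, $\lambda$ is dynamically convex. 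Tightness of $\xi$ then follows: passing to the universal cover $\widetilde Y=S^3$, the pulled-back form is dynamically convex with exactly two simple orbits, so by the Hofer--Wysocki--Zehnder theorem its contact structure is the standard tight one and $\xi$ is its quotient (alternatively, one invokes the known classification of dynamically convex contact forms on lens spaces).

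\medskip
\noindent\emph{Nondegeneracy is automatic; the main obstacle.} It remains to remove the nondegeneracy hypothesis, and this is the delicate part of the proof. Given a possibly degenerate $\lambda$ with exactly two simple orbits, choose nondegenerate $\lambda_n\to\lambda$ in $C^\infty$. By $C^0$-continuity of the ECH spectral invariants, $c_k(Y,\lambda_n)\to c_k(Y,\lambda)$, and the limit is the symplectic action of an orbit set of $\lambda$, hence of the form $a_1(k)T_1+a_2(k)T_2$; combined with the volume property this forces the set $\{a_1T_1+a_2T_2\}$ to have the quadratic growth found above, which already constrains the dynamics near a degenerate iterate $\gamma_i^{m}$. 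Such a $\gamma_i^{m}$ either produces, under arbitrarily $C^\infty$-small perturbations supported near $\gamma_i$, additional simple orbits of action close to $mT_i$ --- incompatible, via the nondegenerate analysis applied to $\lambda_n$ with a uniform action cutoff and via the ECH cobordism maps, with the growth of $\widehat{HM}$ --- or bounds a Morse--Bott family of Reeb orbits, yielding infinitely many simple orbits; either way we reach a contradiction, so $\lambda$ is nondegenerate. The crux of the whole argument is exactly this last step: controlling the orbits created by perturbing a degenerate orbit and squeezing a contradiction out of the ECH spectrum and cobordism maps without knowing a priori that the perturbations preserve the scarcity of simple orbits. Within the nondegenerate case, the substantive point is likewise the exclusion of a \emph{single} hyperbolic orbit, where the occupation of essentially all degrees of a fixed parity by ECH classes must be played off against the $O(\sqrt R)$ growth of a one-parameter family of monomials.
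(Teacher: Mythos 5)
This statement is Theorem~1.2 of Cristofaro-Gardiner, Hutchings and Liu, which the paper under review \emph{cites} rather than proves; there is no proof in this paper to compare your attempt against. What you have written is a reconstruction of the original CHL argument, and the broad outline is recognizable: use Taubes' isomorphism to know that ECH is unbounded in a fixed parity, observe that with only two simple orbits the generator count is governed by a quadratic form in the multiplicities, deduce that hyperbolic orbits would make the count too sparse, derive irrationality from nondegeneracy, and get the volume--period relation from the Weyl-law asymptotics $c_k^2/k\to 2\,\mathrm{vol}$. That much is accurate as a summary of the shape of the nondegenerate case.

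However, you yourself flag the crux --- removing the nondegeneracy hypothesis --- and your sketch there does not close the gap. The difficulty is precisely that a degenerate iterate $\gamma_i^m$ need not be an isolated degenerate periodic point: it could have arbitrarily complicated local return dynamics, and a generic perturbation near it will create \emph{new simple orbits of small action} without any a priori control on how many or of what index. Your claim that this is ``incompatible with the growth of $\widehat{HM}$'' needs an actual mechanism: one has to show that the limiting ECH spectrum of $\lambda$ already lies in the countable set $\{a_1T_1+a_2T_2\}$ and that the Weyl law forces a specific quadratic asymptotic, and then separately rule out both the creation of a third short simple orbit under perturbation \emph{and} the Morse--Bott degenerate scenario. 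In CHL this occupies most of the paper and relies on additional inputs beyond what you invoke (action spectrum rigidity, careful cobordism/filtered ECH estimates, and an intricate case analysis of the local normal form at a degenerate orbit). As written, ``either way we reach a contradiction'' is an assertion, not an argument. Similarly, the step ``the volume--period relation forces $\widetilde\theta_i>1$'' for dynamical convexity, and the tightness argument via passing to $S^3$, are both stated without the intermediate lemmas that make them go through (in particular, the pullback to the universal cover does not obviously preserve the two-orbit hypothesis, since a single simple orbit downstairs lifts to several). So the proposal captures the right framework but should not be read as a proof; it is a plausible outline with genuine gaps exactly where the original paper does its hardest work.
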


Inspired by these results, one may ask for even more specific qualitative properties of Reeb flows on a given manifold. In this paper, we apply ECH tools to study the qualitative properties of Reeb flows on the real projective 3-space, $\R P^3$. Namely, we study a refinement of the Weinstein conjecture for this particular manifold, proving the existence of an elliptic Reeb orbit under some assumptions.

\subsection{Elliptic Reeb orbit on $\R P^3$}
Before stating the main results of this paper, we introduce some notation. Recall that given a Reeb orbit $\gamma$ and a symplectic trivialization $\tau$ of $\xi|_\gamma$, there is a well defined Conley--Zehnder index $CZ_\tau(\gamma)$. This index is a well defined integer depending just on the trivialization $\tau$ when the first Chern class $c_1(\xi)$ vanishes, and has a simple description for nondegenerate Reeb orbits in dimension $3$, as we now recall. Let $\gamma$ be a nondegenerate Reeb orbit on $(Y,\lambda)$. If $\gamma$ is hyperbolic, the linearized Reeb flow rotates an eigenvector of the Poincaré map $P_\gamma$ by angle $\pi k$, for some integer\footnote{The integer $k$ is even when $\gamma$ is positive hyperbolic and odd in the negative hyperbolic case.} $k$, and 
$$CZ_\tau (\gamma^n) = n k.$$ Here $\gamma^n$ denotes the $n$-fold iterate
\begin{eqnarray*}
\gamma^n\colon \R/T\Z &\to& Y \\ s &\mapsto& \gamma(ns)
\end{eqnarray*} of the Reeb orbit $\gamma$. In particular, when $\gamma$ is a hyperbolic Reeb orbit, the Conley--Zehnder index $CZ_\tau(\gamma)$ is linear with respect to the iterates of $\gamma$. On the other hand, if $\gamma\colon \R/T\Z \to Y$ is elliptic, the linearized Reeb flow $d\phi_t|_\xi$ is conjugate to a rotation by angle $2\pi \theta_t \in \R$, where $\theta_t$ is continuous with respect to $t \in [0,T]$ and $\theta_0 = 0$. In this case, one has
$$CZ_\tau(\gamma^n) = 2\lfloor n \theta \rfloor +1,$$
where $\theta = \theta_T$ is the \emph{rotation angle} of $\gamma$ with respect to $\tau$.

We denote by $CZ(\gamma)$ the Conley--Zehnder index with respect to a symplectic trivialization that extends over a disk bounded by $\gamma$. In particular, we use it for a global trivialization of $\xi$ (in case of a trivial bundle).

\begin{defi}
Let $(Y,\lambda)$ be a three dimensional contact manifold such that $c_1(\ker \lambda)|_{\pi_2(Y)} = 0$. The contact form $\lambda$ is called linearly positive if $CZ(\gamma)>0$ for every contractible Reeb orbit. Moreover, $\lambda$ is dynamically convex if $CZ(\gamma) \geq 3$ for every contractible Reeb orbit $\gamma$.
\end{defi}

We recall that $\R P^3 = L(2,1)$ admits a unique tight contact structure up to isotopy, see \cite[Theorem 2.1]{MR1786111}, and the standard tight contact structure $\xi_0 = \ker \lambda_0$ is a trivial symplectic vector bundle. Here $\lambda_0$ denotes the induced contact form on $\R P^3$ by the restriction to the three sphere $S^3$ of the standard Liouville form
$$\lambda_0 = \frac{1}{2}(x_1 dy_1 - y_1 dx_1 + x_2 dy_2 - y_2 dx_2)$$
defined on $\R ^4$. Now we are ready to state the first result of this paper.

\begin{theorem}\label{contractibleind2}
Let $\lambda$ be a nondegenerate linearly positive contact form on $\R P^3$ defining a tight contact structure $\xi$. Suppose $\lambda$ does not admit a contractible Reeb orbit with Conley--Zehnder index $2$. Then, the Reeb flow for $\lambda$ has an elliptic Reeb orbit with Conley--Zehnder index $1$. In particular, this holds when $\lambda$ is nondegenerate and dynamically convex.
\end{theorem}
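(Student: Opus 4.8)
The plan is to read off the desired orbit from a distinguished punctured pseudoholomorphic curve produced by the $U$ map on embedded contact homology, in the trivial class $\Gamma = 0 \in H_1(\R P^3;\Z)=\Z/2$. First I would recall the relevant ECH. Since $\R P^3$ is a spherical space form it is an $L$-space, so Taubes' isomorphism with monopole Floer homology, together with the known monopole Floer homology of $\R P^3$ (equivalently, a direct model computation with the contact form obtained as the $\Z/2$-quotient of the boundary of an irrational ellipsoid), gives $ECH_*(\R P^3,\xi_0,0)\cong\Z$ for all even $*\geq 0$ and $0$ otherwise, with the empty orbit set $\emptyset$ generating $ECH_0$ and the degree $-2$ map $U$ restricting to an isomorphism $ECH_{2k}\to ECH_{2k-2}$ for every $k\geq 1$. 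Here tightness of $\xi$ is used to know that $c_1(\xi)=0$, so $\xi$ is a trivial bundle, the grading is integral, and a global trivialization $\tau$ exists; nondegeneracy of $\lambda$ ensures these groups are computed by the ECH chain complex of $\lambda$ for a generic admissible $J$ on $\R\times\R P^3$.

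Next I would produce the curve. Choosing an orbit set $\alpha$ of ECH grading $2$ occurring with nonzero coefficient in a cycle generating $ECH_2(\R P^3,\xi_0,0)$, the fact that $U$ carries this class onto a generator of $ECH_0=\Z\langle\emptyset\rangle$ forces $\langle U\alpha,\emptyset\rangle\neq 0$; by the definition of the $U$ map there is then, for generic $J$ and generic base point $z\in\R\times\R P^3$, a $J$-holomorphic current $\mathcal{C}$ with $I(\mathcal{C})=2$, positive ends asymptotic to covers of the orbits of $\alpha$, no negative ends, and passing through $z$. By Hutchings' classification of currents of ECH index $\leq 2$, $\mathcal{C}$ has no trivial cylinder component (a trivial cylinder would require a negative end), and it is either a single embedded connected curve of ECH index $2$ or the disjoint union of two embedded connected curves of ECH index $1$; in the latter case the component meeting $z$ would need Fredholm index $\geq 2$, while an ECH-index-$1$ component has Fredholm index $1$, a contradiction. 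Hence $\mathcal{C}=C$ is connected and embedded with $\mathrm{ind}(C)=I(C)=2$; write $g$ for its genus and $\gamma_1,\dots,\gamma_n$ for its positive ends (covers of simple orbits of $\alpha$), so $\sum_e[\gamma_e]=0$ in $H_1(\R P^3)$.

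The core of the argument is the following bookkeeping. Using the global trivialization $\tau$ one has $c_\tau(C)=0$, so the Fredholm index formula gives $2=\mathrm{ind}(C)=-\chi(C)+\sum_{e=1}^{n}CZ(\gamma_e)=(2g-2+n)+\sum_{e=1}^{n}CZ(\gamma_e)$, that is
$$\sum_{e=1}^{n}\bigl(CZ(\gamma_e)+1\bigr)=4-2g.$$
Linear positivity forces $CZ(\gamma_e)\geq 1$ for every end: this is immediate if $\gamma_e$ is contractible, and if $\gamma_e$ is not contractible then $\gamma_e^2$ is a contractible Reeb orbit, so $CZ(\gamma_e^2)>0$, which (in the elliptic case via the rotation number, in the hyperbolic case via the linear index) forces $CZ(\gamma_e)\geq 1$. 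Thus every summand is $\geq 2$, so $n+g\leq 2$, leaving $(n,g)\in\{(2,0),(1,1),(1,0)\}$. In the first two cases some end $\gamma:=\gamma_e$ has $CZ(\gamma)=1$. In the case $(n,g)=(1,0)$ the single end is $\gamma_0^m$ with $CZ(\gamma_0^m)=3$, where $\alpha=\{(\gamma_0,m)\}$; if $m\geq 2$ then $\gamma_0$ is elliptic (hyperbolic orbits occur with multiplicity one), its rotation number $\theta$ with respect to $\tau$ satisfies $0<m\theta<2$, hence $0<\theta<1$ and $CZ(\gamma_0)=1$, again producing an orbit of index $1$; and $m=1$ is excluded, since then $C$ would be a disk-like global surface of section for the Reeb flow, forcing $\R P^3\cong S^3$. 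Checking that $C$ indeed meets all the ECH partition conditions (so the index bookkeeping above is an equality rather than an inequality) and making the exclusion of the case $m=1$ rigorous is the step I expect to require the most care.

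It remains to see that the orbit $\gamma$ with $CZ(\gamma)=1$ found above is elliptic. Nondegeneracy makes $\gamma$ elliptic or hyperbolic, and it also implies that every iterate of an elliptic simple orbit is elliptic; so if $\gamma$ were hyperbolic, writing $\gamma=\delta^k$ with $\delta$ simple and necessarily hyperbolic, the iteration formula $CZ(\delta^j)=j\,CZ(\delta)$ gives $k\,CZ(\delta)=1$, whence $k=1$ and $\gamma=\delta$ is simple hyperbolic of odd index $1$, i.e. negative hyperbolic; then $\delta^2$ is a contractible Reeb orbit with $CZ(\delta^2)=2$, contradicting the hypothesis. Hence $\gamma$ is an elliptic Reeb orbit with $CZ(\gamma)=1$. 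Finally, a dynamically convex $\lambda$ has $CZ\geq 3>0$ on all contractible orbits, hence is linearly positive and admits no contractible orbit of index $2$, so the last assertion follows from what precedes.
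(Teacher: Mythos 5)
Your proposal is correct and follows essentially the same approach as the paper: using the $U$ map on $ECH_*(\R P^3,\xi_0,0)$ to produce an embedded ECH-index-$2$ curve with no negative ends, running the Fredholm index bookkeeping with linear positivity to reduce to the cases $(n,g)\in\{(2,0),(1,1),(1,0)\}$ (matching the paper's cases \ref{c}, \ref{a}, \ref{b}), excluding a simple asymptotic orbit in the genus-zero, one-ended case via the disk-like global surface of section and the Hofer--Wysocki--Zehnder characterization of the tight $S^3$, and finally ruling out hyperbolicity of a $CZ=1$ orbit since its square would be contractible of index $2$. The compactness of the relevant moduli space that you correctly flag as the delicate step is precisely what the paper establishes in its Proposition \ref{gss2}, using the hypothesis that there is no contractible Reeb orbit of Conley--Zehnder index $2$ to preclude the appearance of an index-$1$ plane in a broken limit.
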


\begin{rmk} Leonardo Macarini pointed out to the author that this result also follows from $S^1$-equivariant symplectic homology theory. In fact, this homology is generated by Reeb orbits and it admits a grading given by the Conley--Zehnder index. Since the degree $1$ group is nontrivial, there must exist an orbit with Conley--Zehnder index $1$. The fact that this orbit is elliptic follows from the hypothesis on Conley--Zehnder index $2$ orbits and the behavior of the index under iterations. In addition, the dynamically convex case in the theorem also follows from a more general result due to Hryniewicz and Salomão in \cite{hryniewicz2016elliptic}. However, the approach we shall follow in the proof here, despite being related, is different from the one followed by them. Further, still in this case, there is a more general result for (possibly degenerate) dynamically convex contact forms in $\R P^{2n+1}$ due to Leonardo Macarini and Miguel Abreu, see \cite[Corollary 2.7]{abreu2017dynamical}.
\end{rmk}

In \cite{hutchings2011quantitative}, Hutchings defined the ECH spectrum for a contact three manifold $(Y,\lambda)$. This is a sequence of nonnegative numbers
$$ 0 = c_0(Y,\lambda) < c_1(Y,\lambda) \leq c_2(Y,\lambda)\leq \ldots \leq \infty, $$
defined using the $U$ map on ECH. These numbers have nice properties that we shall not discuss here except for one: if $c_k(Y,\lambda)<\infty$, then there exists an orbit set $\alpha$, which is null-homologous, with $c_k(Y,\lambda) = \mathcal{A}(\alpha)$. Here an orbit set is a set of the form $\alpha = \{(\gamma_i,m_i)\}$, where $\gamma_i$ are embedded Reeb orbits and $m_i$ are positive integers, and $\mathcal{A}(\alpha)$ denotes the \emph{total action} of $\alpha$, i.e., $$\mathcal{A}(\alpha) = \sum_i m_i \mathcal{A}(\gamma_i) := \sum_i m_i \int_{\gamma_i} \lambda.$$
Moreover, $\alpha$ being a \emph{null-homologous orbit set} means that the total homology class
$$[\alpha] := \sum_i m_i [\gamma_i] \in H_1(Y;\Z)$$
is equal to zero. The described property is sometimes called the \emph{spectrality property}. Denote by $\mathcal{A}^0_{\min}(\lambda)$ the smallest action of a null-homologous orbit set. By the spectrality property, it is easy to see that $c_1(Y,\lambda)\geq \mathcal{A}^0_{\min}(\lambda)$. Under the hypothesis that $c_1(Y,\lambda)$ is realized by $\mathcal{A}^0_{\min}(\lambda)$, we can weaken the condition on Conley--Zehnder index $2$ orbits in Theorem \ref{contractibleind2} and obtain an estimate for the action of the elliptic Reeb orbit found.

\begin{theorem}\label{embeddedind2}
Let $\lambda$ be a nondegenerate linearly positive contact form on $\R P^3$ defining a tight contact structure $\xi$. Suppose that every contractible Reeb orbit with Conley--Zehnder index $2$ is embedded. Moreover, suppose that the first value of the ECH spectrum $c_1(\R P^3,\lambda)$ is equal to the smallest action of a null-homologous orbit set $\mathcal{A}^0_{\min}(\lambda)$. Then, the Reeb flow for $\lambda$ has an elliptic Reeb orbit with Conley--Zehnder index $1$ and action in the interval $[\mathcal{A}^0_{\min}(\lambda)/2,\mathcal{A}^0_{\min}(\lambda)]$.
\end{theorem}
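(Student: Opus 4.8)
The plan is to combine the ECH $U$-map with the spectrality property and a Poincar\'e--Birkhoff type argument. Since $\R P^3$ carries a unique tight contact structure and $\xi$ is a trivial bundle, $ECH_*(\R P^3,\lambda)$ has an absolute $\Z$-grading in each class of $H_1(\R P^3;\Z)=\Z/2$, and in the null-homologous class it is a $U$-tower $[\emptyset]=\zeta_0,\zeta_1,\dots$ with $U\zeta_k=\zeta_{k-1}$ and (from the ECH computation of the lens space $L(2,1)$) $I(\zeta_1)=2$; by definition $c_1(\R P^3,\lambda)=c_{\zeta_1}(\R P^3,\lambda)$. By the spectrality property there is an admissible orbit set $\alpha$, in the null-homologous class, with $\mathcal{A}(\alpha)=c_1(\R P^3,\lambda)=\mathcal{A}^0_{\min}(\lambda)=:L$ and $I(\alpha)=2$. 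Since $ECH$ of $\R P^3$ in this class vanishes in gradings $\le 1$ apart from $[\emptyset]$ in grading $0$, we have $\partial\alpha=0$ and $[\emptyset]$ is not a boundary, so $U\zeta_1=[\emptyset]\neq 0$ forces, for generic $J$ and a generic point $z\in\R\times\R P^3$, a $J$-holomorphic curve $C$ through $z$ with $I(C)=\operatorname{ind}(C)=2$ (the point constraint fixes $\operatorname{ind}(C)=2$), positive ends on the orbits underlying $\alpha$ in the partitions prescribed by the ECH partition conditions, and no negative ends; by the ECH index inequality $C$ is somewhere injective and embedded, and it is connected, since each component has only positive ends, hence null-homologous asymptotics and ECH index $\ge 2$, while the total is $2$.

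Next I would carry out the index bookkeeping. Fix a global trivialization $\tau$ of $\xi$, so $c_\tau(C)=0$ and $CZ_\tau$ agrees with the (here globally defined) index $CZ$ of the paper. Writing $p$ for the number of positive ends and using $-\chi(C)=p-2$, the identity $\operatorname{ind}(C)=2$ reads $\sum_{j=1}^{p}CZ(\delta_j)=4-p$ for the asymptotic orbits $\delta_j$, while $I(C)=2$ together with the adjunction formula for $C$ constrains $Q_\tau(C)$ and the writhes of the ends. Note first that, because $\lambda$ is linearly positive and every Reeb orbit on $\R P^3$ has a contractible square, $CZ(\eta)\ge 1$ for \emph{every} Reeb orbit $\eta$. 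A case analysis on the shape of $\alpha$ and the partition conditions at its orbits then yields a simple orbit $e$ with $CZ(e)=1$: if $\alpha=\{(e,2)\}$, then either $C$ has two simple ends on $e$ and $2\,CZ(e)=2$, or $C$ has one end on $e^2$ and $CZ(e^2)=3$, both giving $CZ(e)=1$; if $\alpha=\{(\gamma_1,1),(\gamma_2,1)\}$, then $CZ(\gamma_1)+CZ(\gamma_2)=2$ together with $CZ\ge 1$ gives $CZ(\gamma_1)=CZ(\gamma_2)=1$. In the remaining case $\alpha=\{\delta\}$ with $\delta$ simple and $CZ(\delta)=3$, I would instead use that $C$ is a fast finite-energy plane asymptotic to $\delta$, whose projection is a disk-like global surface of section with binding $\delta$; since the boundary rotation number then lies in $(1,2)$, a Poincar\'e--Birkhoff argument on the first-return map produces a simple interior orbit $e$ with $CZ(e)=1$.

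It then remains to check ellipticity and the action bound. The orbit $e$ is elliptic: $CZ(e)=1$ is odd, so $e$ is not positive hyperbolic, and it is not negative hyperbolic, since then $e^2$ would be a non-embedded contractible Reeb orbit with $CZ(e^2)=2$, contradicting the hypothesis that contractible orbits of Conley--Zehnder index $2$ are embedded. For the upper bound: if $e$ underlies $\alpha$ then $\mathcal{A}(e)\le\mathcal{A}(\alpha)=L$; if $e$ is an interior orbit of a disk-like global surface of section whose binding has action $L$, then $e$ meets the section once and its action equals its return time, which for a surface of section coming from a fast finite-energy plane is everywhere strictly less than $L$. For the lower bound, $H_1(\R P^3;\Z)=\Z/2$ makes $\{(e,2)\}$ null-homologous, so $2\mathcal{A}(e)=\mathcal{A}(\{(e,2)\})\ge\mathcal{A}^0_{\min}(\lambda)=L$, i.e.\ $\mathcal{A}(e)\ge L/2$. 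Hence $\mathcal{A}(e)\in[L/2,L]$, as asserted.

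The main obstacle is the middle step. One must treat every configuration of $\alpha$ and the possible multiply covered ends of $C$ through the ECH partition conditions, and, in the case $\alpha=\{\delta\}$, genuinely upgrade the $U$-curve to a disk-like global surface of section (in the spirit of Cristofaro-Gardiner--Hutchings--Pomerleano and of the fast-plane theory) and run a Poincar\'e--Birkhoff argument that outputs an orbit of index exactly $1$, not merely some closed orbit. This is precisely where the hypotheses are spent: linear positivity excludes nonpositive Conley--Zehnder indices, the embeddedness hypothesis on $CZ=2$ contractible orbits forces $e$ to be elliptic rather than negative hyperbolic (and rules out bad multiple covers), and the identity $c_1(\R P^3,\lambda)=\mathcal{A}^0_{\min}(\lambda)$ is what clamps $\mathcal{A}(\alpha)=L$ and thereby confines $\mathcal{A}(e)$ to $[L/2,L]$.
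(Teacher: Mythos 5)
Your proposal follows the paper's overall strategy: use the ECH $U$-map to produce an ECH/Fredholm index $2$ $J$-holomorphic curve $C$ through a base point, use linear positivity and the index formula to bound the number of positive ends, use the embeddedness hypothesis on contractible $CZ=2$ orbits to rule out a negative hyperbolic $CZ=1$ orbit, and use the identity $c_1(\R P^3,\lambda)=\mathcal{A}^0_{\min}(\lambda)$ to clamp the action of $\alpha_{k_0}$. Where you genuinely diverge, and where a gap appears, is in the case $\alpha=\{\delta\}$ with $\delta$ simple and $CZ(\delta)=3$. The paper's key observation, which your proposal misses, is that this case is \emph{impossible} on $\R P^3$: under the running hypotheses the plane projects (Proposition 3.6) to a disk-like global surface of section bounded by the simple orbit $\delta$, and then Theorem 1.4 of Hofer--Wysocki--Zehnder in \cite{hofer1995characterisation} forces $(\R P^3,\xi)$ to be contactomorphic to the tight $(S^3,\xi_0)$, an absurdity. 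Consequently the $CZ=3$ asymptotic orbit is necessarily a multiple cover $e^m$ ($m\ge 2$) of a simple orbit $e$, and the same elementary index bookkeeping you already used elsewhere yields $CZ(e)=1$ directly. Your proposed Poincar\'e--Birkhoff argument on the first-return map of that disk-like section is not only heavier machinery than required for a case that does not occur; as you yourself flag, it is also incomplete: you would need to justify both that the interior periodic point found has Conley--Zehnder index exactly $1$ (not merely that it exists) and that its return time, hence its action, is strictly bounded above by $\mathcal{A}(\delta)=L$, neither of which is automatic. In addition, your case list omits the genus-one possibility in Proposition 3.1 of the paper (a genus-one curve with a single positive end at an orbit of $CZ=1$, e.g.\ $\alpha=\{(\delta,1)\}$ with $\delta$ simple null-homologous, or $\alpha=\{(e,2)\}$ with one end on $e^2$ and $CZ(e^2)=1$). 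Finally, for the lower bound $\mathcal{A}(e)\ge L/2$ in all cases, it is cleanest to argue as the paper implicitly does from minimality of $\alpha_{k_0}$ rather than from return-time estimates: if $e$ appears in $\alpha_{k_0}$ with multiplicity $m$, then $\{(e,2)\}$ (or $\{(e,1)\}$ when $[e]=0$) is a cheaper null-homologous orbit set unless $m\le 2$, whence $\mathcal{A}(e)=L/m\ge L/2$.
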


In fact, the arguments presented in the proofs of Theorem \ref{contractibleind2} and Theorem \ref{embeddedind2} are almost the same. In particular, one concludes that the elliptic Reeb orbit found in Theorem \ref{contractibleind2} has action in the interval $[c_1(\R P^3,\lambda)/2,c_1(\R P^3,\lambda)]$.

We note that since $H_1(\R P^3;\Z) = \Z_2$, the inequality $\mathcal{A}_{min}^0(\lambda) \leq 2\mathcal{A}_{min}(\lambda)$ must hold, where $\mathcal{A}_{min}(\lambda)$ is the minimal action between all the Reeb orbits for $\lambda$. Therefore, the elliptic Reeb orbit obtained in the previous theorem has action in the interval
$$[\mathcal{A}_{min}(\lambda), \mathcal{A}^0_{min}(\lambda)] \subset [\mathcal{A}_{min}(\lambda),2\mathcal{A}_{min}(\lambda)].$$

Similar results can be obtained analogously to what we do for some tight lens spaces. In fact, after the initial version of this paper, Shibata has independently proved interesting related results for dynamically convex contact forms on lens spaces $L(p,p-1)$, see \cite{shibata2023dynamically}.

Since the case of a contact form admitting exactly two Reeb orbits was described by Theorem \ref{2orbits}, the interesting cases for Theorem \ref{contractibleind2} and Theorem \ref{embeddedind2} are those with infinitely many Reeb orbits. In this case, we have the following result due to Shibata.

\begin{theorem}(Theorem 1.6 in \cite{shibata2022existence})
Let $Y$ be a closed three manifold such that $b_1(Y) = rk(H_1(Y;\mathbb{Z})) = 0$. Suppose that $\lambda$ is a nondegenerate contact form on $Y$ such that the Reeb flow has infinitely many simple periodic orbits and at least one elliptic orbit. Then, there exists at least one simple positive hyperbolic orbit.
\end{theorem}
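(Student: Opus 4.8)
The plan is to argue by contradiction: assume $(Y,\lambda)$ has no simple positive hyperbolic Reeb orbit, and show this forces the ECH chain complex — hence, via Taubes' isomorphism with Seiberg--Witten Floer cohomology — to be too small to coexist with infinitely many simple orbits together with an elliptic one. First I would record a parity observation. Since $b_1(Y)=0$, the class $c_1(\xi)$ is torsion, so $ECH_*(Y,\lambda)$ carries the usual $\Z/2$--grading (a relative $\Z$--grading within each $\Gamma\in H_1(Y;\Z)$) with respect to which $\partial$ has odd degree. Recall from the description above that a simple \emph{positive} hyperbolic orbit has even Conley--Zehnder index, while elliptic orbits, all their iterates, and simple negative hyperbolic orbits have odd index, and that hyperbolic orbits occur with multiplicity one in any ECH generator. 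Combining this with the standard fact that the $\Z/2$--grading of a generator $\alpha$ equals the number of positive hyperbolic orbits in $\alpha$, mod $2$ and up to a constant depending only on $\Gamma$, the absence of a simple positive hyperbolic orbit implies that all ECH generators in a fixed class $\Gamma$ sit in a single $\Z/2$--grading. As $\partial$ has odd degree, $\partial\equiv 0$, and we get a canonical identification $ECH_*(Y,\lambda)\cong ECC_*(Y,\lambda)$.

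Next I would bound the size of $ECH_*$. By Taubes' isomorphism $ECH_*(Y,\lambda,\Gamma)\cong\widehat{HM}^{-*}(Y,\mathfrak{s}_\Gamma)$, and for $b_1(Y)=0$ the monopole Floer homology of each spin--$c$ structure agrees, in all sufficiently large gradings, with the $2$--periodic ``bar'' theory $\overline{HM}\cong\Z[U,U^{-1}]$, which has rank at most one in every grading. Summing over the finitely many spin--$c$ structures, $\operatorname{rk}ECH_k(Y,\lambda)\le |H_1(Y;\Z)|=:N$ for all $k\gg 0$. Since by the previous paragraph $ECC_k$ is freely generated by the ECH generators of grading $k$, there is a $k_0$ such that \emph{every grading $k\ge k_0$ supports at most $N$ ECH generators}.

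Finally I would derive the contradiction from the remaining two hypotheses. Let $e$ be the elliptic orbit, with rotation angle $\rho$ relative to some trivialization $\tau$; its iterates $e^m$ are ECH generators with $CZ_\tau(e^m)=2\lfloor m\rho\rfloor+1$, and their gradings grow quadratically in $m$ with positive leading term (consistent with the volume property $c_k(Y,\lambda)^2/k\to 2\operatorname{vol}(Y,\lambda)$). Using that there are infinitely many simple orbits, one then exhibits, for some grading $k\ge k_0$, strictly more than $N$ distinct ECH generators — either from single, high--index, simple orbits, or from products $e^m\cdot\beta$ of iterates of $e$ with orbit sets $\beta$ built from the other simple orbits — contradicting the bound just established. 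Hence a simple positive hyperbolic orbit must exist.

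I expect this last step to be the main obstacle: the parity reduction to $\partial\equiv 0$ and the Taubes/Seiberg--Witten input bounding $\operatorname{rk}ECH_k$ are clean, but the counting of ECH generators in a fixed large grading is delicate. Here one should separate cases according to whether the Conley--Zehnder indices of the simple Reeb orbits are uniformly bounded — in which case infinitely many single--orbit generators already accumulate in one grading — or unbounded, in which case the elliptic orbit is genuinely needed: its quadratic family $\{e^m\}$ fills the gradings of a fixed parity up to gaps of size $O(\sqrt{k})$, and one combines it with the other simple orbits, controlling the $Q_\tau$ cross--terms in the ECH index of $e^m\cdot\beta$ via the writhe and the asymptotics of $Q_\tau$ along the $U$--tower. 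Making this bookkeeping precise, with the volume property / action--grading asymptotics as the quantitative tool, is where the real work lies.
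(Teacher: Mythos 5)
This statement is quoted in the paper from \cite{shibata2022existence} without a proof, so there is no internal argument to compare against; I will therefore assess your outline on its own merits. Your Steps 1 and 2 are correct and reproduce the structural backbone one expects: the ECH index-parity formula $I(\alpha,\beta)\equiv h_+(\alpha)+h_+(\beta)\ (\mathrm{mod}\ 2)$ shows that if there is no simple positive hyperbolic orbit then every generator in a fixed $\Gamma$ sits in a single parity, so $\partial\equiv 0$ and $ECH\cong ECC$; and Taubes' isomorphism together with the eventual $U$-periodicity of $\widehat{HM}$ for a rational homology sphere bounds $\mathrm{rk}\,ECH_k(Y,\lambda,\Gamma)$ for $k\gg 0$.

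The gap is the counting in Step 3, which you yourself flag as ``the main obstacle,'' and two concrete things go wrong as stated. First, your dichotomy is phrased in terms of whether Conley--Zehnder indices of simple orbits are bounded, but the grading of a single null-homologous simple orbit is $I(\gamma,\emptyset)=c_\tau(Z)+Q_\tau(Z)+CZ_\tau(\gamma)$ with $Q_\tau(Z)=sl(\gamma)+c_\tau(Z)$, and the self-linking number $sl(\gamma)$ can be unbounded even when $CZ_\tau(\gamma)$ is bounded; so the ``bounded $CZ$'' branch does not by itself force infinitely many single-orbit generators to pile up in one grading. Second, the unbounded branch does not close numerically: the grading of $e^m$ grows quadratically in $m$, so $\{e^m\}$ supplies only $O(\sqrt K)$ generators in gradings $\le K$, and decorating by any \emph{fixed} finite set of further simple orbits multiplies this by a constant --- still $o(K)$, never exceeding the linear bound $\mathrm{rk}\bigoplus_{k\le K}ECH_k=O(K)$, so no pigeonhole results. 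To close the argument one needs a genuinely quantitative input beyond a raw count --- e.g.\ the volume asymptotics $c_k^2/k\to 2\,\mathrm{vol}(Y,\lambda)$ applied along the $U$-tower, comparing the action of the unique generator in each high grading against competitors built from the elliptic orbit together with other simple orbits, or a finer index-versus-action comparison --- and this, the actual content of Shibata's proof, is absent from your proposal.
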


Putting this together Theorems \ref{2orbits}, \ref{contractibleind2} and \ref{embeddedind2}, we obtain the following consequence.

\begin{cor}
Let $\lambda$ be a contact form on $\R P^3$ satisfying the hypotheses of Theorem \ref{contractibleind2} or Theorem \ref{embeddedind2}. Then, either $\lambda$ admits exactly two Reeb orbits being both of them irrationally elliptic or $\lambda$ admits infinitely many simple Reeb orbits with at least one being elliptic with Conley--Zehnder index $1$ and at least one being positive hyperbolic.
\end{cor}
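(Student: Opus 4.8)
The plan is to assemble the dichotomy from results already at our disposal: the ``two or infinitely many'' alternative of Theorem~\ref{2orinfty}, the structure theorem for contact forms with exactly two Reeb orbits (Theorem~\ref{2orbits}), Theorems~\ref{contractibleind2} and~\ref{embeddedind2}, and Shibata's theorem producing positive hyperbolic orbits. I would begin by recording two facts that hold under the hypotheses of either Theorem~\ref{contractibleind2} or Theorem~\ref{embeddedind2}: $\lambda$ is nondegenerate, and $\R P^3$ is a rational homology three-sphere, so $H^2(\R P^3;\Z)=\Z_2$ is torsion and hence $c_1(\xi)$ is torsion. Theorem~\ref{2orinfty} then applies, and $\lambda$ has either exactly two or infinitely many simple Reeb orbits, which splits the proof into two cases.

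If $\lambda$ has exactly two simple Reeb orbits, Theorem~\ref{2orbits} directly gives that both of them are irrationally elliptic, which is the first alternative in the statement.

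Suppose instead $\lambda$ has infinitely many simple Reeb orbits. By Theorem~\ref{contractibleind2} (under its hypotheses) or by Theorem~\ref{embeddedind2} (under its hypotheses), there is an elliptic Reeb orbit $\gamma$ with $CZ(\gamma)=1$. Write $\gamma=\eta^{k}$ with $\eta$ the underlying simple orbit; since an iterate of a hyperbolic orbit is hyperbolic, $\eta$ must itself be elliptic, with some rotation angle $\theta$ relative to the global trivialization of $\xi$. From $CZ(\eta^{k})=2\lfloor k\theta\rfloor+1=1$ we get $\lfloor k\theta\rfloor=0$, i.e.\ $0\le k\theta<1$; nondegeneracy of $\eta^{k}$ rules out $k\theta\in\Z$, so $0<k\theta<1$, hence $0<\theta<1$ and $CZ(\eta)=2\lfloor\theta\rfloor+1=1$. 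Thus $\eta$ is a simple elliptic Reeb orbit with Conley--Zehnder index $1$. Finally, Shibata's Theorem~1.6 in \cite{shibata2022existence} applies, since $b_1(\R P^3)=0$, $\lambda$ is nondegenerate, the Reeb flow has infinitely many simple periodic orbits, and it has at least one elliptic orbit; therefore there is at least one simple positive hyperbolic orbit. Together with $\eta$, this gives the second alternative and completes the proof.

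The argument is essentially bookkeeping combining cited statements, so I do not expect a genuine obstacle. The one place requiring a small argument is the passage from ``an elliptic orbit of index $1$'' to ``a \emph{simple} elliptic orbit of index $1$'', carried out above via the iteration formula for the Conley--Zehnder index; beyond that, the only care needed is to confirm that the hypotheses of Theorem~\ref{2orinfty} (torsion $c_1(\xi)$, nondegeneracy) and of Shibata's theorem ($b_1=0$, nondegeneracy, infinitely many simple orbits, at least one elliptic orbit) are all implied, on $\R P^3$, by the hypotheses of Theorems~\ref{contractibleind2} and~\ref{embeddedind2}, which they are.
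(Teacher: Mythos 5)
Your proof is correct and follows the same route the paper implicitly takes: Theorem \ref{2orinfty} for the two-or-infinitely-many dichotomy (after checking $c_1(\xi)$ is torsion and $\lambda$ is nondegenerate), Theorem \ref{2orbits} for the two-orbit case, Theorems \ref{contractibleind2}/\ref{embeddedind2} for the elliptic orbit, and Shibata's theorem for the positive hyperbolic one. The paper states the corollary as an immediate combination without spelling this out, so your short argument using the iteration formula $CZ(\eta^{k}) = 2\lfloor k\theta\rfloor + 1$ to pass from ``an elliptic orbit of index $1$'' to ``a \emph{simple} elliptic orbit of index $1$'' is a small but worthwhile detail the paper leaves implicit.
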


\begin{rmk}\label{hypinfi} It follows from Theorem \ref{2orinfty} and Theorem \ref{2orbits} that the property of having infinitely many simple Reeb orbits is a necessary condition to the existence of a hyperbolic Reeb orbit for any nondegenerate contact form on\footnote{In fact, it holds on any closed three dimensional manifold due to the ``$2$ or infinitely many orbits'' result due to Colin, Dehornoy and Rechtman in \cite{colin2022existence}.} $\R P^3$.
\end{rmk}
 
\subsection{Elliptic closed geodesic on Finsler $2$-spheres}
Let $F\colon TS^2 \to [0,\infty)$ be a Finsler metric\footnote{For the definition of a Finsler metric, see Section \ref{finsler}.} on the two sphere $S^2$. We denote by $S_FS^2 = F^{-1}(1)$ the unit tangent bundle for the Finsler metric $F$. As we shall recall in Section \ref{finsler}, $S_FS^2 \cong \R P^3$ admits a contact form, namely the Hilbert form $\lambda_F$, such that the Reeb flow coincides with the Geodesic flow for $F$. In particular, a closed geodesic on $(S^2,F)$ has the same type (elliptic or hyperbolic) as a corresponding Reeb orbit on $(S_FS^2,\lambda_F)$. Moreover, the contact structure $\xi_F = \ker \lambda_F$ is tight and symplectically trivial. Further, the Conley--Zehnder index of a Reeb orbit with respect to a global trivialization of $\xi_F$ agrees with the Morse index of the corresponding geodesic on $S^2$.

There are still seemingly simple open problems about closed geodesics for a given Finsler metric $F$ on $S^2$. For instance, it is known that for any reversible $F$ there exists infinitely many closed geodesics but it is still a conjecture that there must be two or infinitely many closed geodesics for an irreversible $F$, see e.g. \cite[Conjecture 1]{long2006multiplicity}. Theorem \ref{2orinfty} above gives a positive answer to this conjecture for the bumpy\footnote{A metric $F$ is bumpy when every closed geodesic is nondegenerate.} case. In addition, Remark \ref{hypinfi} confirms a conjecture due to Long asserting that the existence of a hyperbolic prime closed geodesic on a Finsler $S^2$ implies the existence of infinitely many prime closed geodesics, see \cite[Conjecture 2.2.2]{burns2021open}, but also just for the bumpy case. Moreover, there is another conjecture by Long directly related to the two previous results.

\begin{conj}[Conjecture 5 in \cite{long2006multiplicity}]
There exists at least one elliptic closed geodesic for any Finsler metric on the two sphere $S^2$.
\end{conj}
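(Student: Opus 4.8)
The plan is to settle the conjecture in the bumpy, dynamically convex regime by transporting Theorem \ref{contractibleind2} and Theorem \ref{embeddedind2} through the geodesic--Reeb dictionary recalled above, and to dispatch the low-complexity case separately via Theorem \ref{2orbits}. Recall that for a Finsler metric $F$ on $S^2$ one has $S_F S^2 \cong \R P^3$, the Hilbert form $\lambda_F$ is a contact form on $S_F S^2$ whose Reeb flow is the geodesic flow, $\xi_F = \ker \lambda_F$ is the unique tight contact structure (hence symplectically trivial), closed geodesics correspond to Reeb orbits of the same type, and the Conley--Zehnder index relative to a global trivialization of $\xi_F$ equals the Morse index of the corresponding closed geodesic. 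Under these identifications, \emph{bumpy} is exactly \emph{nondegenerate}, and \emph{dynamically convex} (Morse index $\geq 3$ for every closed geodesic whose tangent lift is contractible in $\R P^3$) translates verbatim into the dynamical convexity hypothesis of Theorem \ref{contractibleind2}.

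First, if $F$ admits exactly two closed geodesics, then $\lambda_F$ has exactly two simple Reeb orbits, so by Theorem \ref{2orbits} applied to the lens space $\R P^3$ both of them --- hence both closed geodesics --- are irrationally elliptic, and the conjecture holds with no further hypothesis. In the remaining case I would impose a flag-curvature pinching condition guaranteeing dynamical convexity (in the Riemannian case, $1/4$-pinched curvature suffices, by standard pinching criteria for dynamical convexity); together with bumpiness this verifies all hypotheses of Theorem \ref{contractibleind2}, which, applied to $(\R P^3,\lambda_F)$, yields an elliptic Reeb orbit of Conley--Zehnder index $1$, i.e.\ an elliptic closed geodesic of Morse index $1$ on $(S^2,F)$. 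For the quantitative refinement one instead checks the spectral hypothesis $c_1(\R P^3,\lambda_F) = \mathcal{A}^0_{\min}(\lambda_F)$ required by Theorem \ref{embeddedind2} (established for $1/4$-pinched Riemannian metrics elsewhere in the paper) and concludes that the elliptic closed geodesic has length in $[\mathcal{A}^0_{\min}(\lambda_F)/2,\mathcal{A}^0_{\min}(\lambda_F)]$; since $H_1(\R P^3;\Z) = \Z_2$ forces $\mathcal{A}^0_{\min}(\lambda_F) \leq 2\,\ell_{\min}$, where $\ell_{\min}$ is the length of the shortest closed geodesic, this length in fact lies in $[\ell_{\min}, 2\,\ell_{\min}]$.

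The main obstacle is that this approach only proves the conjecture under the auxiliary hypotheses. Bumpiness could in principle be removed by a degenerate analogue of Theorem \ref{contractibleind2}, but the ECH argument here --- resting on the $U$-map and an index count that uses nondegeneracy --- does not directly produce one; for the purely dynamically convex case one must instead appeal to the Abreu--Macarini result cited in the Remark following Theorem \ref{contractibleind2}. Removing the pinching/convexity assumption altogether is the genuinely hard part: a general irreversible Finsler metric on $S^2$ need not be linearly positive, and short contractible closed geodesics of Morse index exactly $2$ --- precisely the configuration that the hypotheses of Theorems \ref{contractibleind2} and \ref{embeddedind2} are designed to exclude --- can occur, so the full strength of Long's conjecture remains out of reach of this method.
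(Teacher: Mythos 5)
The statement you were given is Long's Conjecture, which is an open problem; the paper states it without a proof, and you are right not to claim to resolve it. The partial result you derive coincides, essentially verbatim, with the Corollary the paper records immediately after the conjecture: the Harris--Paternain criterion upgrades the $(r/(1+r))^2$-pinching to dynamical convexity of $\lambda_F$, and Theorem~\ref{contractibleind2} then yields an elliptic closed geodesic of Morse index~$1$; your separate case split for exactly two geodesics is redundant (Theorem~\ref{contractibleind2} already covers that case, and Theorem~\ref{2orbits} is only needed if one wants the qualifier ``irrationally'' elliptic), and your quantitative length bound via Theorem~\ref{embeddedind2} together with Theorem~\ref{thmc1} is a correct, if tangential, refinement in the $1/4$-pinched Riemannian case. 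Your final paragraph accurately identifies why the full conjecture stays out of reach of this method: bumpiness can be lifted by the Abreu--Macarini result cited in the Remark after Theorem~\ref{contractibleind2}, but the pinching/linear-positivity hypothesis is structural to the ECH index count and cannot be removed here.
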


As already observed in \cite{hryniewicz2016elliptic}, a nice consequence of Theorem \ref{contractibleind2} related to this conjecture is the following result. Let $r:=\max \{F(-v)\mid F(v)=1 \}\geq 1$ be the reversibility of the Finsler metric $F$, as defined by Rademacher.

\begin{cor}
Let $(S^2,F)$ be a bumpy Finsler sphere with reversibility $r$. If $F$ is $\left( \frac{r}{1+r}\right)^2$-pinched, i.e., 
\begin{equation}\label{dynpinched}
\left( \frac{r}{1+r}\right)^2 < K \leq 1,
\end{equation}
for all flag curvatures $K$, then there exists an elliptic closed geodesic with Morse index $1$ on $(S^2,F)$.
\begin{proof}
Harris and Paternain proved in \cite{harris2008dynamically} using a length estimate of the shortest geodesic loop for $F$ due to Rademacher in \cite{RADEMACHER2008763}, that condition \eqref{dynpinched} is sufficient for the Hilbert form $\lambda_F$ being dynamically convex on $S_FS^2 \cong \R P^3$. Hence, this Corollary follows from Theorem \ref{contractibleind2}.
\end{proof}
\end{cor}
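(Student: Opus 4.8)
The plan is to deduce this from Theorem \ref{contractibleind2} through the standard correspondence between the geodesic flow of $(S^2,F)$ and the Reeb flow of the Hilbert form $\lambda_F$ on $S_FS^2\cong \R P^3$. First I would record the facts collected in Section \ref{finsler}: closed geodesics of $(S^2,F)$ are in period-preserving bijection with Reeb orbits of $\lambda_F$, this bijection preserves the elliptic/hyperbolic type, and with respect to a global trivialization of $\xi_F$ the Conley--Zehnder index of a Reeb orbit equals the Morse index of the corresponding closed geodesic; moreover $\xi_F$ is the tight, symplectically trivial contact structure on $\R P^3$, and $F$ being bumpy is equivalent to $\lambda_F$ being nondegenerate. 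Thus it suffices to produce an elliptic Reeb orbit of Conley--Zehnder index $1$ on $(\R P^3,\lambda_F)$.

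To apply Theorem \ref{contractibleind2} I must verify its hypothesis, and for this it is enough to show that $\lambda_F$ is dynamically convex, i.e.\ that $CZ(\gamma)\ge 3$ for every contractible Reeb orbit $\gamma$ (this implies both linear positivity and the absence of a contractible orbit of index $2$). Here I would invoke Harris--Paternain \cite{harris2008dynamically}: combining Rademacher's lower bound \cite{RADEMACHER2008763} for the length of the shortest geodesic loop of $F$ in terms of the reversibility $r$ with the flag-curvature pinching \eqref{dynpinched}, they prove precisely that $\lambda_F$ is dynamically convex. Morally, the pinching controls the rotation numbers of the linearized geodesic flow while Rademacher's estimate guarantees that this control is strong enough to force Morse index at least $3$ for every contractible closed geodesic.

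Granting this, Theorem \ref{contractibleind2} applied to $(\R P^3,\lambda_F)$ produces an elliptic Reeb orbit $\gamma$ with $CZ(\gamma)=1$; translating back through the dictionary of the first step, $\gamma$ corresponds to an elliptic closed geodesic of $(S^2,F)$ whose Morse index equals $1$, which is the assertion. The only non-formal ingredient is the implication ``pinching $\Rightarrow$ dynamical convexity'', which I would cite rather than reprove; were one to establish it directly, the principal obstacle is the uniform control of the Conley--Zehnder (equivalently Morse) indices of all short contractible geodesics from the two-sided flag-curvature bound, which is exactly where Rademacher's systolic-type estimate is needed.
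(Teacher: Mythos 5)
Your proof is correct and takes essentially the same route as the paper: cite Harris--Paternain (using Rademacher's length estimate) to get dynamical convexity of $\lambda_F$ from the pinching condition, then apply Theorem \ref{contractibleind2} and translate back through the geodesic/Reeb dictionary. The paper's proof is simply a compressed version of yours.
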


For a more general and detailed discussion containing this corollary, see \cite[Section 2.3]{abreu2017dynamical}, where they extend the existence of an elliptic closed geodesic even for the case that one does not have the strict inequality in \eqref{dynpinched} and obtain other similar results.

\subsection{Computations on ECH spectra}
Inspired in Theorem \ref{embeddedind2}, one may ask the following question.

\begin{quest}\label{whichctc}
For which contact forms on $\R P^3$ does $c_1(\R P^3,\lambda) = \mathcal{A}_{min}^0(\lambda)$ hold?
\end{quest}

Using a result due to Ballman, Thorbergsson and Ziller \cite[Theorem 4.2]{MR727711} we give the following partial answer to Question \ref{whichctc}.

\begin{theorem}\label{thmc1}
Let $(S^2,g)$ be a Riemannian sphere such that $1/4 < K \leq 1$, where $K$ is the sectional curvature. Then
\begin{equation}\label{c1=action}
c_1(S_gS^2,\lambda_g) = \mathcal{A}_{min}^0(\lambda_g) = 2L,
\end{equation}
where $L$ is the length of a shortest closed geodesic for $g$. Moreover, it is well known that $L \in [2\pi,4\pi)$ in this case.
\end{theorem}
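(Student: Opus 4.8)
The plan is to compute $\mathcal A^0_{\min}(\lambda_g)$ and $c_1(S_gS^2,\lambda_g)$ separately and show each equals $2L$. For the first, recall that the Reeb orbits of the Hilbert form $\lambda_g$ are precisely the closed geodesics of $g$ traversed at unit speed, with action equal to length, and that iterates of orbits correspond to iterates of geodesics. I would begin with the topology of $Y:=S_gS^2$: it is the total space of a circle bundle over $S^2$ of Euler number $\pm 2$, so $Y\cong\R P^3$ and $H_1(Y;\Z)\cong\Z_2$ is generated by the fiber class. The homological point is that the lift $\hat\gamma\subset Y$ of any \emph{simple} closed geodesic $\gamma$ represents the nontrivial class: projecting to $S^2$ shows $[\hat\gamma]$ is a multiple of the fiber, and a Gauss--Bonnet computation shows this multiple is $1$, because the velocity field of a geodesic bounding a disk $D\subset S^2$ winds $\tfrac{1}{2\pi}\int_D K\,dA=1$ time around relative to a framing extending over $D$ (one checks this directly on a great circle of the round sphere, whose lift is the generator of $\pi_1(SO(3))$). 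Hence no single simple closed geodesic with multiplicity one is null-homologous, every null-homologous orbit set involves simple geodesics of total multiplicity $\ge 2$, and since every geodesic has length $\ge L$ this yields $\mathcal A^0_{\min}(\lambda_g)\ge 2L$. For the reverse inequality I use that a Riemannian metric is reversible: for a shortest closed geodesic $\gamma$ the reversed geodesic $\bar\gamma$ is also closed of length $L$, and $\hat{\bar\gamma}\ne\hat\gamma$ are distinct embedded Reeb orbits, so $\alpha:=\{(\hat\gamma,1),(\hat{\bar\gamma},1)\}$ is a null-homologous orbit set of action exactly $2L$; here I invoke \cite[Theorem 4.2]{MR727711} to know that under $1/4<K\le 1$ the shortest closed geodesic is simple and, more importantly, to control the closed geodesics of length $\le 2L$ so that no null-homologous orbit set of smaller action arises. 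The assertion $L\in[2\pi,4\pi)$ is classical comparison geometry: Klingenberg's injectivity-radius estimate for $1/4$-pinched metrics gives $L\ge 2\,\mathrm{inj}\ge 2\pi$, while $L<4\pi$ follows from the sphere theorem together with a standard diameter/width bound.

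The lower bound $c_1(Y,\lambda_g)\ge\mathcal A^0_{\min}(\lambda_g)=2L$ is immediate from the spectrality property recalled in the introduction, so everything reduces to $c_1(Y,\lambda_g)\le 2L$, i.e. to showing that the minimal-action orbit set $\alpha$ above already represents the distinguished ECH class $\sigma_1$ characterized by $U\sigma_1=[\emptyset]$. After a small Morse--Bott perturbation — which moves $c_1$ and $2L$ by arbitrarily little, since ECH spectral invariants are $C^0$-continuous in the contact form — we may take $\lambda_g$ nondegenerate; by Harris--Paternain \cite{harris2008dynamically} the pinching makes $\lambda_g$ dynamically convex, so the short orbits $\hat\gamma,\hat{\bar\gamma}$ are irrationally elliptic. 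The strategy is to compute $U[\alpha]=[\emptyset]$ in $ECH_*(Y,\xi_g,0)$ over $\Z/2$; since $ECH_*(\R P^3,\xi_{\mathrm{std}},0)$ is cyclic in each nonnegative even grading with $U$ carrying the grading-$2$ part (generated by $\sigma_1$) isomorphically onto $ECH_0=\Z/2\langle[\emptyset]\rangle$, this simultaneously shows $[\alpha]\ne 0$, pins down its grading as $2$, and identifies $[\alpha]=\sigma_1$, giving $c_1(Y,\lambda_g)\le\mathcal A(\alpha)=2L$.

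Computing $U[\alpha]$ means counting, through a generic point of the symplectization $\R\times Y$, ECH-index-$2$ holomorphic currents with positive ends on $\{\hat\gamma,\hat{\bar\gamma}\}$ and no negative ends; capping the negative end of the symplectization completes $Y$ to the unit disk cotangent bundle of $(S^2,g)$. For the round metric this disk bundle is a domain in the affine quadric $\{z_1^2+z_2^2+z_3^2=1\}\subset\C^3$, and its two rulings give through each generic point exactly one pair of holomorphic lines, asymptotic respectively to $\hat\gamma$ and to $\hat{\bar\gamma}$; this pair is an index-$2$ current and the resulting count is odd, so $U[\alpha]=[\emptyset]$ in the round case. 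For a general $1/4$-pinched $g$ I would argue that, since by \cite[Theorem 4.2]{MR727711} there is no null-homologous orbit set of action in $(0,2L)$, the part of the ECH chain complex of action $\le 2L$ in the class $\Gamma=0$ is spanned by $[\emptyset]$ together with products and squares of the length-$L$ elliptic geodesics and carries vanishing differential for grading and action reasons; one then identifies $U[\alpha]$ by deforming the holomorphic-curve picture from the round model, using the length-spectrum control of \cite[Theorem 4.2]{MR727711} to keep the relevant ends below action $2L$ throughout.

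The main obstacle is this last step for a non-round metric: one must both describe the low-action portion of the ECH complex and its differential — which is exactly where the precise statement of \cite[Theorem 4.2]{MR727711} about short closed geodesics on pinched spheres is essential — and carry out the $U$-map count, either through a transversality argument for index-$2$ currents asymptotic to two distinct elliptic orbits or through a careful continuation from the explicit quadric model. Using the orbit set $\{\hat\gamma,\hat{\bar\gamma}\}$ rather than the multiple cover $\hat\gamma^2$ (available because a Riemannian metric is reversible) is what keeps this curve analysis manageable, since multiply-covered asymptotics are the usual source of transversality difficulties.
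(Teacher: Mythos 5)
Your overall scaffold matches the paper's: identify $\mathcal{A}^0_{\min}(\lambda_g)=2L$ via comparison geometry, reduce to $c_1\le 2L$ via spectrality, and try to show that the orbit set $\alpha=\gamma\overline\gamma$ (the two lifts of the shortest geodesic) satisfies $U[\alpha]=[\emptyset]$. However, you explicitly flag what is in fact the central unresolved point: carrying out the $U$-map count for a non-round $1/4$-pinched metric. Your two suggestions --- a transversality argument for the index-$2$ currents, or a continuation from the quadric model --- do not close this. Transversality would give existence of curves but not the oddness of the $\Z_2$-count through the base point, and ``deforming the holomorphic-curve picture from the round model'' is not a well-posed move, since the specific count of curves through a point is not an invariant of a deformation of $(\lambda,J)$; only the $U$-map on homology is. The paper's resolution, which your argument is missing, is geometric: Birkhoff's theorem (Theorem~\ref{birkhoffthm}) produces an annulus-like global surface of section with boundary $\gamma\cup\overline\gamma$; this feeds into \cite[Proposition~3.16]{hryniewicz2022genus} to produce an open book whose pages are projections of curves in $\mathcal{M}^J(\gamma\overline\gamma,\emptyset)$, giving existence of \emph{a} curve through the base point; and then Proposition~\ref{gss} (every such curve projects to a global surface of section) together with \cite[Proposition~3.5]{hryniewicz2022genus} (projections of distinct such curves are disjoint) force the count to be exactly one. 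That existence--and--uniqueness pair is what actually makes $\langle U_{y,J}(\gamma\overline\gamma),\emptyset\rangle=1$.

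There is also a smaller gap in your lower bound for $\mathcal{A}^0_{\min}(\lambda_g)$. You argue that a null-homologous orbit set must involve simple geodesics of total multiplicity at least two; but a single Reeb orbit over a \emph{non-simple} closed geodesic can already be null-homologous (this happens precisely when the geodesic has an odd number of transverse self-intersections). The paper's Lemma~\ref{a0=2Llemma} handles this case explicitly: the Toponogov and Klingenberg estimates under $1/4<K\le 1$ show any such geodesic has length $\ge 8\pi$, which exceeds $2L<8\pi$, so it cannot compete with $\gamma\overline\gamma$. You gesture at ``controlling closed geodesics of length $\le 2L$'' via \cite[Theorem~4.2]{MR727711}, but the actual argument needs the self-intersection and length estimates, not just the simplicity and index of the shortest geodesic. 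Finally, for the passage to the degenerate case, a $C^0$-continuity argument alone is not quite enough: the paper also needs a continuity statement for the length of the shortest closed geodesic under convergence of metrics (Lemma~\ref{substoL}, via the Calabi--Cao/Birkhoff minmax characterization), which your proposal omits.
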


In \cite[Proposition 3.2]{ferreira2023gromov} one can check that property \eqref{c1=action} holds for any Zoll metric on the sphere. Moreover, from \cite[Proposition 1.9]{ferreira2023gromov} we note that it also holds for metrics corresponding to ellipsoids of revolution in $\R^3$. There are also examples coming from quotients of suitable symmetric hypersurfaces in $\C^2$ called \emph{monotone toric domains}, see e.g. \cite[Theorem 1.7]{gutt2022examples}.

The author believes that Theorem \ref{thmc1} might hold for Finsler metrics satisfying the pinching condition \eqref{dynpinched} as well. The proof we present here uses the reversibility of the Riemannian metric and the existence of the Birkhoff annulus for an embedded closed geodesic in positive curvature. Nevertheless, it might be adapted to that more general case as long as one can obtain versions of Klingenberg \cite[Theorem 2.6.9]{klingenberg1995riemannian} and Toponogov \cite[Theorem 2.7.12]{klingenberg1995riemannian} estimates, and a result due to Ballman, Thorbergsson and Ziller \cite[Theorem 4.2]{MR727711} that we use, for nonreversible Finsler metrics.

The positivity of the curvature is necessary to ensure the equality $c_1(S_gS^2,\lambda_g)= \mathcal{A}_{min}^0(\lambda_g)$ in Theorem \ref{thmc1}. To see this, consider the \emph{dumbbell metric} $g$ on $S^2$, that is, a metric isometric to the dumbbell surface in the Euclidean space $\R^3$. In this dumbbell, each half is close to the round sphere of constant curvature $K=1$ and the pipe connecting the two halves has a sufficiently small radius $\varepsilon>0$ such that the shortest closed geodesic has length $2\pi \varepsilon$, see e.g. \cite[Figure 1]{Cheeger+1971+195+200} or \cite[Figure 4]{calabi1992simple}. In this case, one has
$$\mathcal{A}^0_{min}(\lambda_g) = 4\pi \varepsilon < 2\pi \leq c_1(S_gS^2,\lambda_g).$$
The inequality $c_1(S_gS^2,\lambda_g)\geq 2\pi$ can be verified using the monotonicity property of the ECH spectrum and the existence of a symplectic embedding
$(\mathrm{int}B(2\pi),\omega_0) \hookrightarrow (\mathrm{int}D^*_{g}S^2,\omega_{can})$. Here, 
$$B(2\pi) = \left\{(x_1,x_2,y_1,y_2) \in \R^4 \mid \sum_{j=1}^2 x_j^2 + y_j^2 \leq 2\right\}$$
denotes the Euclidean ball of capacity $2\pi$, $\omega_0 = \sum_{j=1}^2 dx_j \wedge dy_j$ is the standard symplectic form on $\R^4$, $D^*_gS^2 = \{(q,p) \in T^*S^2 \mid \Vert p \Vert_g \leq 1 \}$ denotes the unit disk cotangent bundle over $S^2$ with respect to the metric $g$ and $\omega_{can}$ is the canonical symplectic form on the cotangent bundle $T^*S^2$, locally given by $\sum_{j=1}^2 dp_j \wedge dq_j$ in cotangent coordinates. The existence of such a symplectic embedding follows from \cite[Theorem 1.3]{ferreira2021symplectic}. This embedding yields the inequality
$$2\pi = c_1(\partial B(2\pi),\lambda_0) \leq c_1(S_gS^2,\lambda_g),$$
where $\lambda_0$ is once again the restriction of the standard Liouville form on $\R^4$.

\begin{rmk}
Note that by \cite[Corollary 6.1]{harris2008dynamically}, we have that $\lambda_g$ is dynamically convex for a $1/4$-pinched Riemannian metric $g$ on the sphere $S^2$. Therefore, inspired by Theorem \ref{thmc1}, the author conjectures that $c_1(\R P^3,\lambda) = \mathcal{A}_{min}^0(\lambda)$ holds for every dynamically convex contact form $\lambda$. On the other hand, this equality holds for Hilbert forms coming from ellipsoids of revolution in $\R^3$ but, as already noted by Harris and Paternain in \cite[$\S 6$]{harris2008dynamically}, some of them are not dynamically convex contact forms on $\R P^3$.
\end{rmk}

Katok found interesting examples of Finsler metrics in \cite{katok1973ergodic}. Among them, he studied a family of irreversible metrics with only finitely many closed geodesics. In particular, the irrational Katok metric example on $S^2$ gives a irreversible metric $F_a$ admitting exactly two closed geodesics for every irrational number $a \in (0,1)$. The last result of this paper is the computation of the ECH spectrum for this example. For this, we recall that, given two real numbers $a,b>0$, one defines the sequence $N(a,b)$ consisting of all nonnegative integer linear combinations of $a$ and $b$ arranged in nondecreasing order, and indexed starting at $0$. We denote by $M_2(N(a,b))$ the subsequence of $N(a,b)$ formed by the linear integer combinations with even total weight, i.e., combinations $ma + nb$ such that $m+n$ is an even nonnegative integer.

\begin{theorem}\label{katokspectrum}
Let $a \in (0,1)$ be an irrational number and $F_a$ be the Katok metric for the two sphere. The ECH spectrum of its unit tangent bundle equipped with the Hilbert form is given by
$$(c_k(S_{F_a}S^2,\lambda_{F_a}))_k = M_2\left (N \left (\frac{2\pi}{1+a},\frac{2\pi}{1-a} \right ) \right ).$$
More precisely, $c_k(S_{F_{a}}S^2,\lambda_{F_a})$ is the $k$-th term in the sequence of nonnegative integer combinations $m_1\frac{2\pi}{1+a} + m_2 \frac{2\pi}{1-a}$ such that $m_1+m_2$ is even, ordered in nondecreasing order. In particular,
$$c_1(S_{F_a}S^2,\lambda_{F_a}) = \frac{4\pi}{1+a} = \mathcal{A}_{min}^0(\lambda_{F_a})= 2L.$$
\end{theorem}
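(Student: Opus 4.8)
The plan is to compute the ECH chain complex of $(S_{F_a}S^2,\lambda_{F_a})\cong \R P^3$ directly from the Katok dynamics, exploiting the fact that the Katok geodesic flow is ``toric''. First I would recall the Katok construction: $F_a$ is the Finsler metric obtained from the round metric by the Katok deformation, so that its geodesic flow on $S_{F_a}S^2$ is generated by a vector field of the form $R=R_0+cV$, where $R_0$ generates the periodic round geodesic flow and $V$ generates a commuting free $S^1$-action on $S_{F_a}S^2$; hence $\lambda_{F_a}$ is invariant under a $T^2$-action, and for irrational $a$ it has exactly two simple Reeb orbits $e_1,e_2$ --- the two closed geodesics --- of actions $T_1=\tfrac{2\pi}{1+a}$ and $T_2=\tfrac{2\pi}{1-a}$. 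Equivalently, $(S_{F_a}S^2,\lambda_{F_a})$ is strictly contactomorphic to the quotient $\partial E(b_1,b_2)/\Z_2$ with $b_i=2T_i$ and $\Z_2$ acting by $(z_1,z_2)\mapsto(-z_1,-z_2)$, i.e.\ the lens space $L(2,1)=\R P^3$. By Theorem \ref{2orbits}, $\lambda_{F_a}$ is nondegenerate and $e_1,e_2$ are irrationally elliptic; I would also record their rotation numbers with respect to a global trivialization of $\xi_{F_a}$, read off from the corresponding data on $\partial E(b_1,b_2)$ through the $\Z_2$-quotient.

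Next, since $e_1$ and $e_2$ are the only Reeb orbits, every ECH generator is an orbit set $e_1^{m_1}e_2^{m_2}$ with $(m_1,m_2)\in\Z_{\ge 0}^2$. Each closed geodesic lifts to a noncontractible loop, so $[e_i]$ is the generator of $H_1(\R P^3;\Z)=\Z_2$, whence the orbit set is null-homologous precisely when $m_1+m_2$ is even; its total action is $m_1T_1+m_2T_2$. The key computational step is to evaluate the ECH index $I(e_1^{m_1}e_2^{m_2})$ in the homology class $\Gamma=0$, using the ECH index formula together with the rotation numbers from the first step (equivalently, by transporting the standard ellipsoid computation through the $\Z_2$-quotient, keeping careful track of how even and odd multiplicities of $e_i$ lift to $\partial E(b_1,b_2)$). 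I expect the outcome to be that, on the sublattice $\{m_1+m_2\text{ even}\}$, the map $(m_1,m_2)\mapsto I(e_1^{m_1}e_2^{m_2})$ is a bijection onto $2\Z_{\ge 0}$ and is monotone increasing with respect to the action, the strictness being forced by $a$ irrational (so that the numbers $m_1T_1+m_2T_2$ are pairwise distinct). Consequently every $\Gamma=0$ generator has even index, the ECH differential vanishes on this part, and $ECH_*(\R P^3,\lambda_{F_a},0)$ is free with exactly one generator in each even degree $2k$, namely the orbit set whose action is the $k$-th smallest element of $M_2\!\left(N(T_1,T_2)\right)$.

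Finally I would identify the spectrum. The $U$-map lowers the index by $2$, and --- via Taubes' isomorphism with Seiberg--Witten Floer homology together with the fact that $L(2,1)$ is an L-space (alternatively, by a direct count of $U$-curves, or by continuity from the round metric) --- it restricts to an isomorphism from the degree-$2k$ summand onto the degree-$(2k-2)$ summand. Hence the distinguished classes $\sigma_k$ used to define the ECH spectrum are exactly the generators found above, each homology class contains a unique cycle (as $\partial=0$), and therefore $c_k(S_{F_a}S^2,\lambda_{F_a})$ equals the action of the degree-$2k$ generator, i.e.\ the $k$-th term of $M_2\!\left(N\!\left(\tfrac{2\pi}{1+a},\tfrac{2\pi}{1-a}\right)\right)$. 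The special value $c_1=\tfrac{4\pi}{1+a}$ and the identifications with $\mathcal{A}^0_{\min}(\lambda_{F_a})$ and with $2L$ then follow by inspecting the smallest admissible combinations (the minimum nonzero null-homologous action being $2T_1=\tfrac{4\pi}{1+a}$, realized by $e_1^2$), in analogy with Theorem \ref{thmc1}. I expect the main obstacle to be the second step: pinning down the ECH grading on the lens space $\R P^3$, where the trivialization appearing in the index formula does not extend over a disk, so one must relate the computation carefully to the double cover $\partial E(b_1,b_2)$ --- in particular sorting out the even- versus odd-multiplicity lifts --- in order to verify that the null-homologous generators are graded $0,2,4,\dots$ in order of increasing action, leaving no room for a nonzero differential.
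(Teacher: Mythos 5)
Your proposal follows the same overall route as the paper: two elliptic Reeb orbits with actions $T_i=\frac{2\pi}{1\pm a}$, a vanishing differential by the even-parity of all relative ECH indices, a bijection from the $\Gamma=0$ generators $\gamma_1^{m_1}\gamma_2^{m_2}$ ($m_1+m_2$ even) onto $2\Z_{\ge 0}$ via the absolute grading $I(\alpha,\emptyset)$, the isomorphism $U\zeta_k=\zeta_{k-1}$, and the identification of $c_k$ with the $k$-th smallest even-weight combination of $T_1,T_2$. Where you differ is in the execution of the grading computation: you plan to pull the ellipsoid index formula through the degree-$2$ cover $\partial E\bigl(\tfrac{4\pi}{1+a},\tfrac{4\pi}{1-a}\bigr)\to \R P^3$, tracking even versus odd multiplicities, whereas the paper computes $I$ directly downstairs using a global symplectic trivialization of $\xi_{F_a}$, evaluating $Q_\tau(Z)$ by the trick $Q_\tau(Z)=\tfrac14 Q_\tau(2Z)$ in terms of self-linking and linking numbers of the doubled orbits, and then proving bijectivity onto $2\Z_{\ge 0}$ by an explicit lattice-point counting argument (Lemma \ref{gradingkatok} and Proposition \ref{echkatok}). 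The double-cover route you sketch is flagged in the paper's closing remark as an alternative and should work. Two small corrections to your framing: the obstacle is not that the trivialization fails to extend over a disk --- the tight $\xi_{F_a}$ on $\R P^3$ is symplectically trivial, so a global trivialization exists --- but rather that the individual orbits $\gamma_i$ are only $2$-torsion in $H_1$, which is exactly why one passes to $2Z$ before applying the linking-number formulas; and you do not need to verify by hand that the grading is monotone in the action, since once the grading is a bijection onto $2\Z_{\ge 0}$ and $\partial = 0$, the fact that the $U$-map strictly decreases action already forces $\mathcal{A}(\zeta_k)$ to be the $k$-th smallest total action, which is how the paper closes the argument.
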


We note that, by continuity of the ECH spectrum with respect to the contact form, the theorem above holds for any $a \in [0,1)$ and, for $a=0$, this recovers the computation of the ECH spectrum for the round metric obtained in \cite[Theorem 1.4]{ferreira2021symplectic}.

\begin{rmk}
In \cite[Lemma 4.1]{harris2008dynamically} it is shown that there is a double covering $S^3 \to S_FS^2$ for any Finsler metric on the sphere $S^2$. In the case of the Katok metric $F_a$, it yields a double covering $\phi \colon S^3 \to S_{F_a}S^2$ such that $\phi^*\lambda_{F_a} = 2 h\lambda_0$, where $h\lambda_0$ is the contact form corresponding to the ellipsoid
$$\partial E\left(\frac{2\pi}{1+a},\frac{2\pi}{1-a}\right) = \left\{(z_1,z_2) \in \C^2 \mid \pi\left(\frac{1+a}{2\pi}\vert z_1 \vert^2 + \frac{1-a}{2\pi}\vert z_2 \vert^2\right) = 1\right\},$$
Moreover, Hutchings computed the ECH spectrum of the ellipsoid $\partial E(a,b)$ in \cite{hutchings2011quantitative}, obtaining
$$c_k(\partial E(a,b),\lambda_0) = N(a,b)_k,$$
for any $a,b>0$. In particular, for $a \in [0,1)$ the ECH spectrum $c_k(S_{F_a},\lambda_{F_a}) = M_2\left (N \left (\frac{2\pi}{1+a},\frac{2\pi}{1-a} \right ) \right )_k$ is a distinguished subsequence of the ECH spectrum of the corresponding ellipsoid.
\end{rmk}

As in \cite[\S 4, \S 5]{ferreira2021symplectic}, the computation of the ECH and the ECH spectrum for the Katok example found in this paper, suggests an interesting relation and a method for computing ECH elements of global quotients of hypersurfaces in $\C^2$. This relation probably can be better explored in a more general context in future works.

\vskip 8pt

\noindent{{\bf Acknowledgments:}}
The author would like to thank Vinicius Ramos, Umberto Hryniewicz and Leonardo Macarini for helpful conversations. Thanks also to Marco Mazzucchelli and Lucas Ambrozio for pointing out some facts that helped in the proof of the degenerate case in Theorem \ref{thmc1}. Additionally, the author appreciates the anonymous referees for the careful reading and interesting suggestions.

\section{Quick review on ECH}\label{secech}
We start with a quick review on the theory of embedded contact homology. For a more detailed explanation on this subject, we recommend \cite{hutchings2014lecture}.

\subsection{Chain complex}
Let $Y$ be a three dimensional closed manifold and $\lambda$ be a nondegenerate contact form on $Y$. For a fixed $\Gamma \in H_1(Y;\Z)$, the ECH chain complex $ECC_*(Y,\lambda,\Gamma)$ is the $\Z_2$-vector space generated by \emph{admissible orbit sets}, i.e., sets of the form $\alpha = \{(\alpha_i,m_i)\}$, where
\begin{itemize}
\item $\alpha_i \colon \R/T_i\Z \to Y$ are embedded Reeb orbits.
\item $m_i$ are positive integers satisfying $m_i = 1$, whenever $\alpha_i$ is (positive or negative) hyperbolic.
\item $[\alpha] = \sum_i m_i[\alpha_i] = \Gamma \in H_1(Y;\Z)$.
\end{itemize}
We often use the product notation $\alpha = \Pi_i \gamma_i^{m_i}$, and we commonly refer to an admissible orbit set simply as an \emph{ECH generator}. The differential $\partial$ is defined using a generic \emph{symplectization-admissible} almost complex structure $J$ on the symplectization $\R \times Y$ meaning that $J$ satisfies:
\begin{itemize}
\item $J \frac{\partial}{\partial s} = R$, where $R$ is the Reeb vector field defined by $\lambda$ and $s$ is the coordinate on $\R$.
\item $J\xi = \xi$, where $\xi = \ker \lambda$ is the contact structure.
\item $d\lambda(v,J v)> 0$ for every nonzero vector $v \in \xi$.
\end{itemize}
For two ECH generators $\alpha, \beta$, the coefficient $\langle \partial \alpha, \beta \rangle$ is defined to be a $\Z_2$-count of $J$-holomorphic currents on the symplectization $\R \times Y$ with \emph{ECH index} $1$ and which converge as currents to $\alpha$ (resp. to $\beta)$ when $s$ tends to $+\infty$ (resp. to $-\infty$). We note that since $\langle \partial \alpha, \beta \rangle \neq 0$ implies the existence of a $J$-holomorphic current connecting $\alpha$ to $\beta$, it must hold $\mathcal{A}(\alpha) > \mathcal{A}(\beta)$, i.e., the differential decreases the action. In fact, if $J$ is symplectization-admissible, then $d\lambda$ is pointwise nonnegative restricted to any $J$-holomorphic curve in $\R \times Y$ and the inequality follows from Stoke's theorem. This inequality is strict because $d\lambda(w,Jw) \geq 0$ for every $w \in T(\R \times Y)$ and $d\lambda(w,Jw) = 0$ if, and only if, $w$ is in the subspace generated by $\frac{\partial}{\partial s}$ and the Reeb vector field $R$. Hence, if $\widetilde{u} \colon \Sigma \to \R \times Y$ is a connected $J$-holomorphic curve such that $\int_{\Sigma} \widetilde{u}^*d\lambda = 0$, $\widetilde{u}$ must be a \emph{trivial cylinder} $\R \times \gamma$, where $\gamma$ is a Reeb orbit on $(Y,\lambda)$. More precisely, we must have
\begin{eqnarray*}
\widetilde{u} \colon \R \times \R/T\Z &\to& \R \times Y \\ (s,t) &\mapsto& (s,\gamma(t)).
\end{eqnarray*}

\subsection{ECH index and grading}\label{sectgrading}
Given two orbit sets $\alpha = \Pi_i \alpha_i^{m_i}$ and $\beta = \Pi_j \beta_j^{n_j}$, we denote by $H_2(Y,\alpha,\beta)$ the affine space over the singular homology group $H_2(Y)$ consisting of $2$-chains $\Sigma$ in $Y$ such that
$$\partial \Sigma = \sum_i m_i\alpha_i - \sum_j n_j \beta_j.$$
Given a homology class $Z$ in $H_2(Y,\alpha,\beta)$, its ECH index is defined by the equation
$$I(Z) = c_\tau(Z) + Q_\tau(Z) + CZ_\tau^I (\alpha) - CZ_\tau^I (\beta),$$
where $\tau$ is a trivialization of the contact structure $\xi$ over the orbits appearing in $\alpha$ and $\beta$, $c_\tau(Z)$ is the \emph{relative Chern class}, $Q_\tau(Z)$ is a relative intersection number and $CZ_\tau^I(\alpha) = \sum_i \sum_{k=1}^{m_i} CZ_\tau(\alpha_i^k)$ is a sum of Conley--Zehnder indices of iterates of the orbits $\alpha_i$, and similarly for $CZ_\tau^I (\beta)$. More precisely, given a smooth map $f\colon S \to Y$ representing $Z$, $c_\tau(Z) = c_1(\xi|_{f(S)},\tau)$ is the number of zeros of a generic section of $f^*\xi$ obtained by extending a nonvanishing section of $f^*\xi|_{\partial S}$.

The \emph{relative intersection number} $Q_\tau(Z)$ is defined as $Q_\tau(Z,Z)$ where $Q_\tau(Z,Z^\prime)$ denotes the signed count of transverse intersections for suitable representatives of $Z$ and $Z^\prime$ in $(-1,1) \times Y$. We note that this number is quadratic on $Z$:
$$Q_\tau(Z+Z^\prime) = Q_\tau(Z) + 2 Q_\tau(Z,Z^\prime) + Q_\tau(Z^\prime).$$
In addition, given two null-homologous Reeb orbits $\gamma_1,\gamma_2$ on $(Y,\lambda)$, for two classes $Z_1 \in H_2(Y,\gamma_1, \emptyset)$ and $Z_2 \in H_2(Y,\gamma_2, \emptyset)$, we have $Q_\tau(Z_1,Z_2) = lk(\gamma_1,\gamma_2)$. Here $lk(\gamma_1,\gamma_2)$ denotes the \emph{linking number}
$$lk(\gamma_1, \gamma_2) := \gamma_1 \cdot S_{\gamma_2} \in \mathbb{Z},$$
where $S_{\gamma_2}$ is an embedded Seifert surface for the null-homologous oriented knot $\gamma_2$ which is transverse to $\gamma_1$. Moreover, given a knot $\gamma$ which is transverse to the contact structure $\xi$, e.g., a Reeb orbit, one defines the \emph{self-linking number}:
$$sl(\gamma, S_\gamma) = lk(\gamma, \gamma^\prime),$$
where $S_\gamma$ is a Seifert surface for $\gamma$ and $\gamma^\prime$ is a parallel copy of $\gamma$ obtained pushing $\gamma$ in the direction of a nonvanishing section of $\xi|_{S_\gamma}$. We note that this number does not depend on $S_\gamma$ when $c_1(\xi) \in H^2(Y;\Z)$ vanishes, and hence, we write $sl(\gamma)$. Following the definitions, one obtains the relation
$$sl(\gamma) = Q_\tau(Z) - c_\tau(Z),$$
whenever $Z \in H_2(Y,\gamma,\emptyset)$ and $\gamma$ is a null-homologous Reeb orbit.

With substantial work, Hutchings and Taubes verified that the differential is well defined and satisfies $\partial ^2 = \partial \ \circ \ \partial = 0$, see \cite{hutchings2007gluing,hutchings2009gluing}. The resulting homology is denoted by $ECH_*(Y,\lambda,\Gamma,J)$. Further, one can define a (relative) grading in the chain complex $ECC_*(Y,\lambda,\Gamma,J)$ in the following way. Fix an admissible orbit set $\beta$ such that $[\beta] = \Gamma$ and put $\vert \beta \vert = 0$. For any other admissible $\alpha$ in the class $\Gamma$, we define
\begin{equation}\label{gradingdef}
\vert \alpha \vert := I(Z),
\end{equation}
where $Z \in H_2(Y,\alpha,\beta)$ is an arbitrary class. By Index ambiguity formula in \cite[\S 3.4]{hutchings2014lecture}, if we choose another class $Z^\prime$ in $H_2(Y,\alpha,\beta)$,
$$I(Z) - I(Z^\prime) = \langle c_1(\xi)+2PD(\Gamma), Z-Z^\prime \rangle$$
holds. Hence, \eqref{gradingdef} is not a well defined integer. Nevertheless, it has a well defined class in $\Z_d$, where $d$ is the integer such that the subgroup
$$\{\langle c_1(\xi) + 2 PD(\Gamma), h\rangle \mid h \in H_2(Y;\Z)\} \subset \Z$$ is isomorphic to $d\Z$. In particular, if $c_1(\xi) + 2 PD(\Gamma) = 0$, equation \eqref{gradingdef} defines an actual $\Z$-grading. Moreover, for the singular homology class $\Gamma = 0$, one has the distinguished choice of picking $\beta$ as the empty set $\emptyset$. The ECH index has an additivity property which ensures that $\vert \sigma \vert \equiv \vert \alpha \vert - 1 \mod d$, for every $\sigma$ such that $\langle \partial \alpha, \sigma \rangle \neq 0$.

\subsection{U map and ECH spectrum}
There is also a degree $-2$ map
$$U \colon ECH_*(Y,\lambda,\Gamma,J) \to ECH_{*-2}(Y,\lambda,\Gamma,J),$$
coming from a map defined in the chain complex. Similarly to the differential,  for two ECH generators $\alpha, \beta$, the coefficient $\langle U \alpha, \beta \rangle$ is defined to be a $\Z_2$-count of $J$-holomorphic currents on the symplectization $\R \times Y$ with ECH index $2$, which converge as currents to $\alpha$ (resp. to $\beta)$ when $s$ tends to $+\infty$ (resp. to $-\infty$) and pass through a fixed based point $(0,y) \in \R \times Y$, where $y \in Y$ is a generic point which is not on any (closed) Reeb orbit. Extending linearly, one obtain a map defined on the whole chain complex $ECC_*(Y,\lambda,\Gamma)$ which turns out to be a chain map, and hence, descends to a well defined map on homology. Likewise we noted for the differential, if $\langle U \alpha, \beta \rangle \neq 0$, it must hold $\mathcal{A}(\alpha) > \mathcal{A}(\beta)$, i.e., the $U$ map decreases the action.

Given a real number $L>0$, we define the filtered ECH as follows. Denote by $ECC_*^L(Y,\lambda,\Gamma)$ the $\Z_2$-vector space generated by admissible orbit sets with action $<L$. Since $\partial$ decreases the action, the latter vector space is in fact a subcomplex of $ECC_*(Y,\lambda,\Gamma)$. In this case, the \emph{$L$-filtered ECH group} is defined as the homology group of this subcomplex and is denoted by $ECH_*^L(Y,\lambda,\Gamma,J)$.

As a consequence of the existence of ECH cobordism maps, one can conclude that $[\emptyset] \neq 0 \in ECH_*(Y,\lambda,0,J)$ holds whenever $\lambda$ is \emph{symplectically fillable}, see \cite[Example 1.10]{hutchings2014lecture}. The latter means that there exists a four dimensional symplectic manifold $(X,d\tilde{\lambda})$ such that $\partial X = Y$ and $\tilde{\lambda}|_Y = \lambda$. Hutchings used this fact to define the following sequence of nontrivial quantitative invariants. Let $c_0(Y,\lambda) = 0$ and define
$$c_k(Y,\lambda) = \inf \{L \mid \exists \eta \in ECH_*^L(Y,\lambda,0,J); \ U^k \eta = [\emptyset]\},$$
for each $k \in \Z_{\geq 1}$. This is well defined as long as $\lambda$ is a nondegenerate contact form. For the degenerate case, we define 
\begin{equation}\label{cklim}
c_k(Y,\lambda) = \lim_{n\to \infty} c_k(Y,f_n\lambda),
\end{equation}
where $f_n \colon Y \to \R_{>0}$ are functions on $Y$, with $f_n{\lambda}$ nondegenerate contact forms and $\lim_{n\to \infty} f_n = 1$ in the $C^0$ topology. It follows from \cite[\S 3.1]{hutchings2011quantitative} that the limit in \eqref{cklim} exists and does not depend on the sequence $f_n$. Hence, the ECH spectrum is well defined for any contact form $\lambda$ on $Y$.

Although we need some choices to define the differential and the $U$ map, Taubes proved that ECH and the $U$ map do not depend on the almost complex structure $J$ and neither on the contact form $\lambda$.

\begin{theorem}[\cite{taubes2010embedded,taubes2010embeddedV}]\label{taubes}
There is a canonical isomorphism of relatively graded modules
$$ECH_*(Y,\lambda,\Gamma,J) = \widehat{HM}^{-*}(Y,\mathfrak{s}_\xi + PD(\Gamma)).$$
Moreover, the $U$ map defined on ECH agrees with the analogous $U$ map on the Seiberg-Witten Floer cohomology.
\end{theorem}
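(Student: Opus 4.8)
The plan is to follow Taubes' strategy of deforming the Seiberg--Witten equations on $Y$ by a large multiple of the contact form and identifying the $r\to\infty$ limit with ECH data. First I would fix a Riemannian metric on $Y$ adapted to $\lambda$ (so that $\lambda$ has unit length, $R$ is metric dual to $\lambda$, and $\xi$ is the orthogonal complement) and, for the $\mathrm{spin}^c$ structure $\mathfrak{s}_\xi + PD(\Gamma)$, consider the Seiberg--Witten equations with the curvature term perturbed by $r\lambda$ for a large parameter $r>0$. The crucial analytic input is that, as $r\to\infty$, the zero set of the component $\alpha$ of the spinor of any solution converges as a current (with multiplicities) to an admissible orbit set representing $\Gamma$, i.e.\ to an ECH generator; conversely, each ECH generator arises from a unique Seiberg--Witten solution for $r$ large, constructed by a gluing/implicit function argument. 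The admissibility constraint (hyperbolic orbits occur only with multiplicity one) matches a nondegeneracy condition that rules out the obstructed monopoles, so one obtains a bijection between generators of $ECC_*(Y,\lambda,\Gamma,J)$ and of the monopole Floer chain complex $\widehat{CM}^{-*}(Y,\mathfrak{s}_\xi+PD(\Gamma))$.

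Next I would carry the same limiting analysis over to the four-dimensional equations on $\R\times Y$: as $r\to\infty$, instanton solutions of the perturbed Seiberg--Witten equations converge to $J$-holomorphic currents in the symplectization, and conversely a broken holomorphic current of ECH index one is glued to a unique Seiberg--Witten instanton. Matching these counts (mod $2$, or with coherent orientations for the $\Z$-version) shows that the bijection on generators intertwines the differentials, hence induces the claimed isomorphism on homology. The relative grading is preserved because Taubes' index theorem identifies the ECH index $I(Z)$ with the spectral-flow grading of the corresponding monopoles. For the $U$ map I would use that the Seiberg--Witten $U$ is cap product with the cohomology class Poincar\'e dual to the locus $\{\Psi(y)=0\}$ for a generic point $y\in Y$ off every Reeb orbit; under the $r\to\infty$ correspondence this translates precisely into the condition that the holomorphic current pass through $(0,y)$, which is the definition of the ECH $U$ map, so the two maps agree under the isomorphism.

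The main obstacle is entirely analytic: establishing the $r\to\infty$ compactness (no energy loss, convergence of spinor zero sets to Reeb orbits and of instantons to holomorphic currents), the two gluing theorems producing monopoles from orbit sets and from holomorphic currents, and the transversality and orientation bookkeeping needed for the counts to match exactly. This is the content of Taubes' five-paper series, and a serious proof would essentially reproduce that work; for the purposes of this paper only the isomorphism with $\widehat{HM}$ and the compatibility of the $U$ maps are needed, and these can be quoted directly.
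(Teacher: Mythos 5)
The paper does not prove this statement; it is quoted directly from Taubes' series \cite{taubes2010embedded,taubes2010embeddedV}, exactly as your final paragraph recommends. Your sketch is a faithful high-level outline of Taubes' strategy (large-$r$ perturbation by the contact form, convergence of spinor zero sets to orbit currents, instanton-to-holomorphic-current correspondence, spectral-flow versus ECH index, and the pointwise interpretation of $U$), and correctly concludes that for the purposes of this paper the isomorphism and the $U$-map compatibility should simply be cited.
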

In the previous theorem, $\widehat{HM}^{-*}(Y,\mathfrak{s}_\xi + PD(\Gamma))$ denotes the ``from" version of Seiberg-Witten Floer cohomology defined by Kronheimer and Mrowka in \cite{kronheimer2007monopoles}.

Since ECH does not depend on $\lambda$ or $J$, from now on, we write $ECH(Y,\xi,\Gamma)$.

\subsection{ECH of $\R P^3$}
The ECH of real projective three space $\R P^3$ is well known.
\begin{theorem}\label{echrp3}
Let $\R P^3$ be the real projective three space and $\xi_0$ be its standard tight contact structure. Then its ECH is given by
\begin{align*}
ECH_*(\R P^3,\xi_0,\Gamma) = \begin{cases}
\Z_2, & \text{if} \ * \in 2\Z_{\geq 0} \\
0, & \text{otherwise}
\end{cases}
\end{align*}
for each $\Gamma \in H_1(\R P^3)\cong \Z_2$.
\end{theorem}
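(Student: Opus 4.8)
The plan is to reduce to a single convenient nondegenerate contact form and compute the chain complex explicitly, using that ECH is independent of the contact form (the paragraph following Theorem~\ref{taubes}) and that $\R P^3$ carries a unique tight contact structure up to isotopy \cite{MR1786111}. Write $\R P^3 = L(2,1)$ as the quotient of $S^3 \subset \C^2$ by the free $\Z_2$-action $(z_1,z_2)\mapsto(-z_1,-z_2)$. This action preserves the Liouville form $\lambda_0$ and the ellipsoid $\partial E(a,b)$ for every $a,b>0$, so for irrational $a/b$ the restriction of $\lambda_0$ descends to a nondegenerate contact form $\lambda$ on $\R P^3$ defining $\xi_0$, whose Reeb flow is the quotient of the ellipsoid's. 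Recall that $\partial E(a,b)$ has exactly two simple Reeb orbits for irrational $a/b$; hence so does $(\R P^3,\lambda)$, namely the images $\bar e_1,\bar e_2$ of the two coordinate-axis orbits, each double covered by its lift. By Theorem~\ref{2orbits} (and also directly, by lifting), $\bar e_1$ and $\bar e_2$ are elliptic. Finally, each $\bar e_i$ is the image of a path joining $z$ to $-z$ along an axis circle of $S^3$, hence represents the generator of $H_1(\R P^3;\Z)\cong\Z_2$.

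Since $\bar e_1,\bar e_2$ are elliptic, every multiplicity is admissible, so the ECH generators are precisely the orbit sets $\bar e_1^m\bar e_2^n$ with $m,n\ge 0$, and $[\bar e_1^m\bar e_2^n] = (m+n)\bmod 2 \in H_1(\R P^3;\Z)$; thus the generators in a fixed class $\Gamma$ are those with $m+n\equiv\Gamma \pmod 2$. It remains to compute the ECH grading $|\bar e_1^m\bar e_2^n| = I(Z)$ for $Z\in H_2(\R P^3,\bar e_1^m\bar e_2^n,\beta_\Gamma)$ (with $\beta_0=\emptyset$ and $\beta_\Gamma=\bar e_1$ when $\Gamma\ne 0$) and to run the differential. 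Two routes are available: (a) apply the index formula $I(Z)=c_\tau(Z)+Q_\tau(Z)+CZ_\tau^I(\bar e_1^m\bar e_2^n)-CZ_\tau^I(\beta_\Gamma)$ directly, using that $\bar e_1^2$ and $\bar e_2^2$ bound disks descending from the bounding disks of the axis circles of $\partial E(a,b)$, together with the rotation numbers of $\bar e_1,\bar e_2$ (half of those on $\partial E(a,b)$) and the relevant linking and self-linking numbers; or (b) descend Hutchings' combinatorial description of $ECH_*(\partial E(a,b))$, in which the generators $e_1^m e_2^n$, $(m,n)\in\Z_{\ge 0}^2$, carry the gradings $0,2,4,\dots$, each once, to the $\Z_2$-quotient. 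Either way, the two facts to establish are: every generator has even grading (as $CZ_\tau^I(\bar e_1^m\bar e_2^n)$ is a sum of $m+n$ odd integers and the remaining terms contribute an even amount, or simply because ECH here agrees with the monopole Floer homology of a lens space); and, within a fixed class $\Gamma$, listing the generators $\bar e_1^m\bar e_2^n$ with $m+n\equiv\Gamma$ in order of increasing action (the actions are pairwise distinct since $a/b$ is irrational) yields exactly the gradings $0,2,4,6,\dots$, each once, which is the $\R P^3$-analogue of the lattice-point count for the ellipsoid taken over the sublattice $\{m+n\equiv\Gamma\}$.

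Granting these facts the theorem is immediate: $ECC_*(\R P^3,\lambda,\Gamma)$ is supported in even degrees and $\partial$ has degree $-1$, so $\partial=0$ and $ECH_*(\R P^3,\xi_0,\Gamma)=ECC_*(\R P^3,\lambda,\Gamma)$ is $\Z_2$ in each degree $0,2,4,\dots$ and zero otherwise, for each $\Gamma\in H_1(\R P^3;\Z)$. (One may further note that $U$ restricts to an isomorphism $ECH_{2k}\to ECH_{2k-2}$ for $k\ge 1$, though this is not needed here.) I expect the grading computation in route (a) to be the main obstacle: it is conceptually routine but must be carried out with care because $\R P^3$ is not simply connected, so one works with the square orbits $\bar e_i^2$, chooses Seifert surfaces and computes linking/self-linking numbers, and then checks that the resulting gradings exhaust $2\Z_{\ge 0}$ bijectively within each parity class. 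A shorter but less self-contained alternative is to use Theorem~\ref{taubes} together with the fact that lens spaces are $L$-spaces for monopole Floer homology, so that $\widehat{HM}^{-*}(\R P^3,\mathfrak{s})$ is a single tower for each of the two $\mathrm{spin}^c$ structures, yielding the stated answer once the grading is normalized so that the bottom generator sits in degree $0$.
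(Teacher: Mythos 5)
Your proposal is essentially the route the paper itself develops in detail: the paper proves Theorem~\ref{echrp3} via the irrational Katok metric $F_a$ on $S^2$, whose unit tangent bundle $(S_{F_a}S^2,\lambda_{F_a})$ is (up to a global scaling of $\lambda$) exactly the $\Z_2$-quotient of the irrational ellipsoid $\partial E\bigl(\tfrac{2\pi}{1+a},\tfrac{2\pi}{1-a}\bigr)$ that you propose, and the grading computation you flag as the main obstacle --- computing $Q_\tau$ by squaring and using self-linking and linking numbers of the doubled orbits, then checking the grading is a bijection onto $2\Z_{\geq 0}$ within each class $\Gamma$ --- is precisely what Lemma~\ref{gradingkatok} and Proposition~\ref{echkatok} carry out, with the parity argument forcing $\partial=0$ as you say. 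Your alternative route via Theorem~\ref{taubes} and the monopole Floer cohomology of $\R P^3$ is also one of the paper's explicitly cited proofs.
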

This result follows from the Taubes's isomorphism in Theorem \ref{taubes} together with the computation of the Seiberg-Witten Floer cohomology in \cite[\S 3.3]{kronheimer2007monopoles} and \cite[Corollary 3.4]{kronheimer2007monopoles2}, or computing the ECH visualizing this manifold as a prequantization bundle over the sphere $S^2$ and using a Morse-Bott direct limit argument as in \cite[Theorem 7.6]{nelson2020embedded} or similarly done in \cite[Proposition 4.7]{ferreira2021symplectic}. Also, one can use an irrational Katok metric as we shall explain in Section \ref{subseckatok}. Now, via the $U$ map on Seiberg-Witten Floer cohomology, we can describe the $U$ map on ECH.

\begin{theorem}\cite[Proposition 4.9]{ferreira2021symplectic} \label{Umap}
The $U$ map for $\R P^3$ with the standard tight contact structure $U \colon ECH_*(\R P^3, \xi_0, 0) \to ECH_{*-2}(\R P^3,\xi_0,0)$ is given by $U\zeta_k = \zeta_{k-1}$, where $\zeta_k$ is the generator of $ECH_{2k}(\R P_3,\xi_0,0)$, for $k\geq 1$.
\end{theorem}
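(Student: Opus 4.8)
The plan is to reduce the statement to a single yes/no question about a map $\Z_2\to\Z_2$ and then answer it by importing a known computation: the ECH formalism does the reduction, and one external input finishes the job. First I would record the content of Theorem~\ref{echrp3} in the class $\Gamma=0$: one has $ECH_{2k}(\R P^3,\xi_0,0)\cong\Z_2$, with generator $\zeta_k$, for every $k\ge 0$, and all other groups vanish. Since the $U$ map has degree $-2$, each restriction $U\colon ECH_{2k}(\R P^3,\xi_0,0)\to ECH_{2k-2}(\R P^3,\xi_0,0)$ is a $\Z_2$-linear map between one-dimensional $\Z_2$-vector spaces, hence either zero or an isomorphism; and $U\zeta_0=0$ is automatic because $ECH_{-2}=0$. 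So the theorem is equivalent to the assertion that $U\zeta_k\neq 0$ for every $k\ge 1$, i.e.\ that $U^k\zeta_k=\zeta_0$ for all $k$.

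The route I would take is through Taubes's isomorphism. By Theorem~\ref{taubes}, the pair $\bigl(ECH_*(\R P^3,\xi_0,0),U\bigr)$ is isomorphic, as a graded module over $\Z_2[U]$, to $\bigl(\widehat{HM}^{-*}(\R P^3,\mathfrak{s}_{\xi_0}),U\bigr)$, so it suffices to understand the $U$-module structure of the latter. The space $\R P^3=L(2,1)$ is a rational homology sphere, and the computation of its monopole Floer homology already used for Theorem~\ref{echrp3} (see \cite[\S 3.3]{kronheimer2007monopoles} and \cite[Corollary~3.4]{kronheimer2007monopoles2}, together with the $\Z_2[U]$-module structure the monopole groups carry by construction) shows that in each spin$^c$ structure it is a single ``tower'' as a $\Z_2[U]$-module, on which $U$ acts by the tautological degree $-2$ endomorphism --- surjective in every degree except that it kills the extreme generator. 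Matching this with the grading pattern of Theorem~\ref{echrp3} in the $\Gamma=0$ case identifies the extreme generator with $\zeta_0$, so $U$ maps $ECH_{2k}$ isomorphically onto $ECH_{2k-2}$ for every $k\ge 1$; transporting back through Taubes's isomorphism gives $U\zeta_k=\zeta_{k-1}$ for all $k\ge 1$.

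I would also point out a purely ECH-theoretic argument that sidesteps gauge theory. Recalling that $(\R P^3,\xi_0)$ is symplectically fillable, $[\emptyset]\neq 0$ in $ECH_0(\R P^3,\xi_0,0)$, hence $[\emptyset]=\zeta_0$. For any contact form $\lambda$ on $\R P^3$ defining $\xi_0$ and any $n\ge 1$, one has $c_n(\R P^3,\lambda)<\infty$ if and only if $\zeta_0$ lies in the image of $U^n$; for degree reasons this holds precisely when $U^n\colon ECH_{2n}(\R P^3,\xi_0,0)\to ECH_0(\R P^3,\xi_0,0)$ is nonzero, i.e.\ when $U\zeta_j=\zeta_{j-1}$ for $j=1,\dots,n$. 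Were $U\zeta_k=0$ for some $k\ge 1$, then $U^n$ would annihilate $ECH_{2n}$ for every $n\ge k$ (the composition factors through the zero map $U\colon ECH_{2k}\to ECH_{2k-2}$), forcing $c_n(\R P^3,\lambda)=\infty$ for all $n\ge k$; this contradicts the asymptotic volume property of the ECH spectrum, $c_n(\R P^3,\lambda)^2/n\to 2\,\mathrm{vol}(\R P^3,\lambda)<\infty$, due to Cristofaro-Gardiner, Hutchings and Ramos. Hence $U\zeta_k=\zeta_{k-1}$ for every $k\ge 1$.

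The place where I expect the real work to sit is the global input being imported --- the monopole Floer module structure of the lens space $\R P^3$ together with the agreement of the two $U$ maps in Theorem~\ref{taubes}, or else the asymptotics of ECH capacities --- whereas the ECH formalism itself contributes only the vanishing pattern of Theorem~\ref{echrp3}, which is exactly what collapses a potentially delicate computation into the binary question ``is this map zero or not?''. For the same reason I would not attempt the most hands-on proof --- a direct count, in a Morse--Bott prequantization model of $(\R P^3,\xi_0)$ as a circle bundle over $S^2$ with $\lambda_0$ the connection form, of the ECH-index-$2$ holomorphic currents through a generic point --- since the Morse--Bott perturbation and transversality bookkeeping would be far heavier than invoking either global statement above.
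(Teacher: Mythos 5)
Your first argument is exactly the route the paper cites (Theorem~\ref{Umap} is stated with a pointer to \cite[Proposition 4.9]{ferreira2021symplectic}, preceded by the remark ``via the $U$ map on Seiberg--Witten Floer cohomology we can describe the $U$ map on ECH''): reduce via Theorem~\ref{echrp3} to the yes/no question of whether $U$ kills the degree-$2k$ generator, then transport the $\Z_2[U]$-module structure of $\widehat{HM}$ of $L(2,1)$ across Taubes's isomorphism, using the fact that for a torsion spin$^c$ structure on a rational homology sphere the monopole module in each spin$^c$ class is a single tower. That reduction and transport step are correct, and you identify the right grading normalization ($[\emptyset]=\zeta_0$ at the bottom of the tower).

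Your second, ``gauge-free'' argument is also correct as a logical derivation, and I would keep it, but you should soften the claim that it sidesteps gauge theory. The implication $U\zeta_k=0\Rightarrow c_n(\lambda)=\infty$ for all $n\ge k$ is fine: the degree computation you do ($\eta$ with $U^n\eta=[\emptyset]\ne 0$ must live in degree $2n$) is correct given the vanishing pattern of Theorem~\ref{echrp3}. But the finiteness of $c_n$ packaged in the Cristofaro-Gardiner--Hutchings--Ramos asymptotics is itself established using the existence of a $U$-sequence in $\widehat{HM}$ (i.e.\ Taubes's isomorphism plus the same monopole Floer structure), so the second route does not avoid the global gauge-theoretic input --- it only relocates it into the volume theorem. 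What the second route genuinely buys you is that it applies verbatim to any $(Y,\xi,\Gamma)$ with $c_1(\xi)+2\,\mathrm{PD}(\Gamma)$ torsion, nonzero contact invariant, and ECH concentrated in a single $\Z_2$ per even degree, without re-examining the specific lens-space computation. (Also a small normalization point: the asymptotic is usually written $c_k^2/k\to 4\,\mathrm{vol}$ with $\mathrm{vol}=\tfrac12\int_Y\lambda\wedge d\lambda$; only the finiteness of the limit matters for your argument, so this is cosmetic.)
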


\section{Elliptic Reeb orbit via ECH}
Now we use the ECH structure of $\R P^3$ discussed above to prove Theorem \ref{contractibleind2} and Theorem \ref{embeddedind2}.

\subsection{Distinguished curve via $U$ map}
The first step is to find an interesting pseudoholomorphic curve in the symplectization $\R \times \R P^3$ as stated in the following result.

\begin{prop}\label{curves}
Let $\lambda$ be a nondegenerate linearly positive contact form on $\R P^3$ defining a tight contact structure $\xi=\ker \lambda$. For a generic symplectization-admissible almost complex structure $J$ on $\R \times \R P^3$, there exists at least one of the following embedded $J$-holomorphic curves in $\R \times \R P^3$:
\begin{enumerate}[label=(\alph*)]
\item A genus one surface with only one positive end at a Reeb orbit $\gamma_a$ with $CZ(\gamma_a) = 1$. \label{a}
\item A plane asymptotic to a Reeb orbit $\gamma_b$ with $CZ(\gamma_b) = 3$. \label{b}
\item A cylinder with positive ends at $\gamma_{c_1}$ and $\gamma_{c_2}$ such that $CZ(\gamma_{c_1}) = CZ(\gamma_{c_2}) = 1$. \label{c}
\end{enumerate}
\begin{proof}
First, we note that $\R P^3$ admits a unique tight contact structure modulo isotopies, see \cite[Theorem 2.1]{MR1786111}, and hence, $\xi$ is symplectically trivial and fillable\footnote{For any Riemannian metric $g$ on $S^2$, consider a disk cotangent bundle $D_g^*S^2 = \{p \in T^*S^2 \mid \Vert p \Vert_g \leq 1\}$ equipped with the restriction of the canonical symplectic form.}. Then, let $J$ be a symplectization-admissible almost complex structure on $\R \times \R P^3$ such that $ECH_*(\R P^3,\lambda,J,\Gamma)$ and the map
$$U\colon ECH_*(\R P^3,\lambda,J,\Gamma) \to ECH_{*-2}(\R P^3,\lambda,J,\Gamma)$$
are well defined. From Theorem \ref{taubes} and Theorem \ref{echrp3}, we get
\begin{align*}
ECH_*(\R P^3,\lambda,J,\Gamma) = \begin{cases}
\Z_2, & \text{if} \ * \in 2\Z_{\geq 0} \\
0, & \text{otherwise}
\end{cases}
\end{align*}
for each $\Gamma \in H_1(\R P^3;\Z)$. Let $\Gamma = 0$. From Theorem \ref{Umap}, the $U$ map is an isomorphism in all nonzero degrees. Then, following the discussion in Section \ref{sectgrading},  we can define a $\Z$ grading by setting $ \vert \alpha \vert = I(\alpha,\emptyset)$ for every orbit set $\alpha$ in class $\Gamma= 0 \in H_1(\R P^3;\Z)$. In this grading, the empty set has degree zero and, since $\xi$ is symplectically fillable, $[\emptyset] \neq 0$ is the generator of $ECH_0(\R P^3,\xi,0)$. Since the $U$ map is an isomorphism in degree $2$, there exists a class $x$ in $ECH_2(\R P^3,\xi,0)$ such that $Ux = [\emptyset]$. Let $\sum_{k=1}^n \alpha_k$ be a representative of $x$. By the definition of the $U$ map, we obtain $\sum_{k=1}^n [U\alpha_k] = [\emptyset]$. Hence, we conclude that there must exist at least one ECH index $2$ $J$-holomorphic current in $\R \times \R P^3$ with positive ends in an admissible orbit set $\alpha_{k_0}$ and no negative ends. From \cite[Proposition 3.7]{hutchings2014lecture}, this current must be an embedded curve $C$ with $\text{ind}(C) = I(C) = 2$, where $\text{ind}(C)$ denotes the Fredholm index of the curve $C$. Now let $\gamma_1, \ldots,\gamma_q$ be the positive ends of $C$. Then
\begin{eqnarray*}
2 = \text{ind}(C) &=&  -\chi(C) + 2c_\tau(C) + \sum_{n = 1}^q CZ_\tau(\gamma_n) \\ &=& 2g(C) - 2 + q + \sum_{n = 1}^q CZ_\tau(\gamma_n) \\ &\geq & 2g(C) -2 + 2q,
\end{eqnarray*}
where $\tau$ is a global trivialization of $\xi$ and we use the hypothesis $CZ_\tau(\gamma)>0$ for all Reeb orbit $\gamma$, i.e., $\lambda$ is linearly positive. Thus, $C$ has $q \leq 2$ positive ends and we get $2$ possibilities for $q$.
\begin{enumerate}
\item \textbf{Case $q=1$:} Fredholm index equation yields
$$2 = 2g(C) -2 +1 +CZ_\tau(\gamma),$$
and so, there are two possibilities here, $g(C) = 1$ and $CZ_\tau(\gamma) = 1$, or $g(C) = 0$ and $CZ_\tau(\gamma) = 3$. These are described in \ref{a}, and \ref{b}, respectively.
\item \textbf{Case $q = 2$:} In this case, the Fredholm index equation yields
$$ 2 = 2g(C) -2 + 2 + CZ_\tau (\gamma_1)+CZ_\tau(\gamma_2),$$
and hence, since $\lambda$ is linearly positive, $C$ has genus zero and positive ends at orbits $\gamma_1$ and $\gamma_2$ such that $CZ_\tau(\gamma_1) = CZ_\tau(\gamma_2) = 1$. This is the possibility in \ref{c}.
\end{enumerate}
\end{proof}
\end{prop}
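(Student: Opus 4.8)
The plan is to read the curve off the $U$-module structure of $ECH(\R P^3)$ and then constrain its topology with the Fredholm index formula together with linear positivity. For the algebraic input I would use that $\R P^3$ carries a unique tight contact structure up to isotopy (\cite[Theorem 2.1]{MR1786111}), that this structure is symplectically fillable (say by a unit disk cotangent bundle of $S^2$), and that $\xi$ is trivial as a symplectic bundle, so that I may fix a global trivialization $\tau$ of $\xi$; for such $\tau$ the relative first Chern class $c_\tau$ vanishes on every relative $2$-homology class, since the constant section of the trivialized bundle is nowhere zero. Fillability gives $[\emptyset]\neq 0\in ECH_0(\R P^3,\xi,0)$, and by Theorem \ref{echrp3}, Theorem \ref{taubes} and Theorem \ref{Umap}, with $\Gamma=0$ the grading is an honest $\Z$-grading placing $\emptyset$ in degree $0$ and $U\colon ECH_2(\R P^3,\xi,0)\to ECH_0(\R P^3,\xi,0)$ is an isomorphism.

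Next I would pick $x\in ECH_2$ with $Ux=[\emptyset]$ and represent it by a cycle $\sum_k\alpha_k$ of ECH generators. Expanding the chain-level $U$ map and using $[\emptyset]\neq 0$, some $\alpha_{k_0}$ must be joined to $\emptyset$ by an ECH-index-$2$ $J$-holomorphic current through the fixed generic base point; in particular this current has empty negative asymptotics, and hence no trivial cylinder components (a trivial cylinder over $\gamma$ would contribute $\gamma$ negatively). By the structure theorem for low-ECH-index currents under a generic symplectization-admissible $J$ (\cite[Proposition 3.7]{hutchings2014lecture}) it is then a single embedded $J$-holomorphic curve $C$ with only positive ends and $\text{ind}(C)=I(C)=2$.

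I would then write $\gamma_1,\dots,\gamma_q$ for the positive ends of $C$ and run the Fredholm index formula in the global trivialization:
$$\text{ind}(C)=-\chi(C)+2c_\tau(C)+\sum_{n=1}^q CZ_\tau(\gamma_n)=\bigl(2g(C)-2+q\bigr)+\sum_{n=1}^q CZ_\tau(\gamma_n),$$
using $c_\tau(C)=0$ and $-\chi(C)=2g(C)-2+q$. Linear positivity forces $CZ_\tau(\gamma)\geq 1$ for every Reeb orbit: for contractible orbits this is exactly the hypothesis (the global trivialization agrees with a disk trivialization because $c_1(\xi)=0$), and for an orbit $\gamma$ in the nontrivial class of $H_1(\R P^3)=\Z_2$ I would apply the hypothesis to the contractible iterate $\gamma^2$ and use that $CZ_\tau(\gamma^2)>0$ forces $CZ_\tau(\gamma)\geq 1$, via the iteration formulas for the Conley--Zehnder index in the hyperbolic and elliptic cases. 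Substituting $CZ_\tau(\gamma_n)\geq 1$ into $2=2g(C)-2+q+\sum_n CZ_\tau(\gamma_n)$ gives $2\geq 2g(C)-2+2q$, hence $g(C)+q\leq 2$ and so $q\in\{1,2\}$. If $q=1$ the identity becomes $CZ_\tau(\gamma_1)=3-2g(C)$, giving either $g(C)=1$ with $CZ_\tau(\gamma_1)=1$ (case \ref{a}) or $g(C)=0$ with $CZ_\tau(\gamma_1)=3$ (case \ref{b}); if $q=2$ it forces $g(C)=0$ and $CZ_\tau(\gamma_1)+CZ_\tau(\gamma_2)=2$, so both Conley--Zehnder indices equal $1$ (case \ref{c}).

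The step I expect to be the main obstacle is the second one: invoking the compactness and transversality package (ultimately Hutchings' structure results, built on the Hutchings--Taubes gluing analysis) carefully enough to be certain that the ECH-index-$2$ current counted by $U$ really is a single embedded curve of Fredholm index exactly $2$, with no multiply covered, disconnected, or non-embedded pieces. Once that is secured, the rest is just index bookkeeping.
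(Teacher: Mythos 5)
Your proposal follows essentially the same route as the paper: extract an ECH-index-$2$ current from $U\zeta_1=[\emptyset]$, pass to a single embedded curve via \cite[Proposition 3.7]{hutchings2014lecture}, and enumerate topological types using the Fredholm index formula with $c_\tau=0$. You even fill in a detail the paper leaves implicit, namely that linear positivity (stated only for contractible orbits) forces $CZ_\tau(\gamma)\geq 1$ for noncontractible $\gamma$ as well, by applying the hypothesis to the contractible double cover $\gamma^2$ and using the elliptic/hyperbolic iteration formulas; this is correct and needed since the ends of the curve need only be null-homologous as a set, not individually contractible.
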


\subsection{Global surfaces of section}

We shall also use an important concept in dynamical systems theory, which goes back to Poincaré and the planar circular restricted three-body problem, namely: global surfaces of section. For a smooth flow $\varphi_t$ on a smooth $3$-dimensional manifold $Y$, a \emph{global surface of section for $\varphi_t$} is a compact embedded surface $\Sigma \subset Y$ which satisfies the following properties
\begin{enumerate}[label=(\roman*)]
\item Each component of $\partial \Sigma$ is a periodic orbit of $\varphi_t$.
\item $\varphi_t$ is tranverse to $\Sigma\backslash \partial \Sigma$.
\item For every $y\in Y\backslash \partial \Sigma$, there exists $t_+>0$ and $t_-<0$ such that $\varphi_{t_+}(y)$ and $\varphi_{t_-}(y)$ belongs to $\Sigma\backslash \partial \Sigma$.
\end{enumerate}
A \emph{disk-like global surface of section} is a global surface of section $\Sigma$ which is diffeomorphic to a two-dimensional disk. Likewise, an \emph{annulus-like global surface of section} is a global surface of section which is diffeomorphic to an annulus. The study of global surfaces of section for Reeb flows in dimension $3$ has received significant attention, see e.g. \cite{hofer1995characterisation,hofer1998dynamics,hryniewicz2016elliptic,cristofaro2019torsion,hryniewicz2022genus, hryniewicz2022global, contreras2022existence,colin2022generic}.

Among the known results, we would like to state two that are relevant to what follows. The first one is due to Birkhoff and consists of the existence of an annulus-like global surface of section for geodesic flows on positively curved Riemannian spheres. Given an embedded closed geodesic on a Riemannian $2$-sphere we recall that there are two hemispheres determined by it. The \emph{Birkhoff annulus} is the set of unit vectors based at the geodesic that points towards one of these hemispheres.

\begin{theorem}[Chapter VI in \cite{birkhoff1927dynamical}]\label{birkhoffthm}
Let $(S^2,g)$ be a Riemannian sphere such that $K>0$ everywhere. For an embedded closed geodesic $c$, the Birkhoff annulus $B_c \subset S_gS^2$ is a positive\footnote{Here, positive means that the induced orientation of the boundary agrees with the orientation along the flow.} annulus-like global surface of section for the geodesic flow in the unit tangent bundle corresponding to $g$.
\end{theorem}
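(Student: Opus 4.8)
The plan is to verify the three defining conditions of a global surface of section directly for the explicitly parametrized annulus
$$B_c=\{(c(s),\cos\theta\,\dot c(s)+\sin\theta\,n(s)):s\in\R/\ell\Z,\ \theta\in[0,\pi]\}\subset S_gS^2,$$
where $c$ is parametrized by arc length, $\ell=\mathrm{length}(c)$, and $n(s)$ is the unit normal field along $c$ pointing into the closed hemisphere $D_+$, the other one being $D_-$, so that $S^2\setminus\mathrm{im}(c)=D_+^{\circ}\sqcup D_-^{\circ}$. First I would note that the displayed map is an embedding, so $B_c$ is a compact embedded annulus whose two boundary circles $\{\theta=0\}$ and $\{\theta=\pi\}$ are exactly the two lifts $(c,\dot c)$ and $(c,-\dot c)$ of the closed geodesic to closed orbits of the geodesic flow; this is condition (i), and positivity of the curvature is not used at this step.

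For interior transversality, condition (ii), I would use the footpoint projection $\pi\colon S_gS^2\to S^2$. Along $B_c$ one has $d\pi(T_{(q,v)}B_c)=\R\,\dot c(s)$, whereas $d\pi$ of the geodesic vector field $X$ at $(q,v)$ equals $v=\cos\theta\,\dot c(s)+\sin\theta\,n(s)$, which is proportional to $\dot c(s)$ only when $\sin\theta=0$. Hence $X\notin T_{(q,v)}B_c$ for $\theta\in(0,\pi)$, so the geodesic flow is transverse to $B_c\setminus\partial B_c$. Moreover the $n(s)$-component of $d\pi(X)$ equals $\sin\theta>0$ throughout the interior, so the flow pierces $B_c$ in a single well-defined direction; this co-orients $B_c$ by the flow, which both makes $B_c$ \emph{positive} in the sense of the footnote (the boundary orientation induced by the flow co-orientation agrees with the flow direction along $c^{\pm}$) and is the input used below.

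Condition (iii) is the substance. I would reduce it to the \emph{Key Lemma}: no geodesic other than $c$ (with either orientation) can remain in a single open hemisphere $D_+^{\circ}$ or $D_-^{\circ}$ for all forward time, nor for all backward time. Granting this, let $y\notin\partial B_c$ and let $\gamma$ be the underlying geodesic, so $\gamma\ne c^{\pm}$. Since a geodesic tangent to $c$ at a point coincides with $c$ (uniqueness for the geodesic initial value problem), every intersection of $\gamma$ with $c$ is transverse, so the set of intersection times is discrete, and by the Key Lemma it is unbounded above and below. On each maximal interval between consecutive intersection times $\gamma$ lies in one open hemisphere, which it enters at the left endpoint and leaves at the right endpoint, so along the sequence of crossings the type alternates between ``entering $D_+^{\circ}$'' and ``entering $D_-^{\circ}$''. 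Therefore $\gamma$ crosses $c$ into $D_+^{\circ}$ at arbitrarily large positive and at arbitrarily negative times, and at any such time $t$ the velocity of $\gamma$ points strictly into $D_+$, i.e. $\varphi_t(y)\in B_c\setminus\partial B_c$. Picking one such $t=t_+>0$ and one $t=t_-<0$ gives (iii).

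The hard part, and the only place where $K>0$ is essential, is the Key Lemma; I expect this to be the main obstacle. Here I would argue by contradiction, supposing $\gamma$ stays in $D_+^{\circ}$ for all $t\ge T$. By Gauss--Bonnet applied to the disk $D_+$, whose boundary is a geodesic, $\int_{D_+}K\,dA=2\pi$. In Fermi coordinates $(r,s)$ along $c$, with $r>0$ the signed distance into $D_+$, the metric is $dr^2+f^2\,ds^2$ with $f(0,\cdot)=1$, $f_r(0,\cdot)=0$, $f_{rr}(0,\cdot)=-K|_c<0$; hence $f_r<0$ for small $r>0$, and the geodesic equation $\ddot r=ff_r\,\dot s^2\le 0$ shows that a geodesic which enters the collar $\{0<r<\epsilon\}$ with $\dot r\le 0$ has $r$ decreasing monotonically to $0$, i.e. crosses $c$. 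Consequently a geodesic confined to $D_+^{\circ}$ is in fact confined to a compact subset $\{r\ge\epsilon\}\cap\overline{D_+}$ of $D_+^{\circ}$, so its forward orbit in $S_gS^2$ returns arbitrarily close to itself: there are times $a<b$ with $(\gamma(b),\dot\gamma(b))$ arbitrarily close to $(\gamma(a),\dot\gamma(a))$. Closing up $\gamma|_{[a,b]}$ by a short geodesic chord then yields a closed piecewise-geodesic curve in $D_+^{\circ}$ with arbitrarily small exterior angles, which bounds (after an innermost-disk reduction) a proper subdisk $D'\subsetneq D_+$ to which Gauss--Bonnet forces $\int_{D'}K$ arbitrarily close to $2\pi$; but $\int_{D'}K<\int_{D_+}K=2\pi$ strictly, since $K>0$ on the positive-area region $D_+\setminus D'$ --- a contradiction. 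This is the crux of Birkhoff's argument; the management of corners and self-intersections of the long arc, and the verbatim treatment of the time-reversed case, are routine. This establishes the Key Lemma and hence the theorem.
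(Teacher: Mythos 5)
The paper does not actually prove this theorem; it is cited directly from Chapter~VI of Birkhoff's 1927 book, so there is no internal proof to compare against and I will evaluate your argument on its own terms. Your overall scheme is sound: conditions (i) and (ii) are handled correctly, the reduction of (iii) to the Key Lemma is correct (including the observation that all $\gamma$--$c$ intersections are transverse and alternate sides), the Fermi-coordinate collar computation ($f_r(0,\cdot)=0$, $f_{rr}(0,\cdot)=-K|_c<0$, $\ddot r=ff_r\dot s^2$) is right and does show that a geodesic trapped in $D_+^\circ$ is eventually trapped in the compact set $\{r\ge\epsilon\}$, and the ``returns close to itself'' step needs only compactness of $S_gS^2$, not Poincar\'e recurrence.

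The gap is the Gauss--Bonnet step. You assert that the closed piecewise-geodesic curve $\Gamma=\gamma|_{[a,b]}\cup(\text{chord})$ has arbitrarily small exterior angles, and that after an innermost-disk reduction Gauss--Bonnet forces $\int_{D'}K$ arbitrarily close to $2\pi$. Neither claim holds. The two corners at the chord endpoints are not small: closeness of $(\gamma(a),\dot\gamma(a))$ to $(\gamma(b),\dot\gamma(b))$ controls neither the direction of the short geodesic from $\gamma(b)$ to $\gamma(a)$ relative to $\dot\gamma(b)$, nor the corresponding angle at $\gamma(a)$; generically the chord is transverse (close to orthogonal) to $\dot\gamma$, so those exterior angles are of order $\pi/2$. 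Worse, after innermost-disk reduction the boundary of $D'$ acquires corners at transverse self-intersections of $\Gamma$ whose exterior angles lie anywhere in $(0,\pi)$ and are completely uncontrolled (e.g.\ a small bigon at a near-right-angle crossing has $\sum\theta_i\approx\pi$, hence $\int_{D'}K\approx\pi$). Gauss--Bonnet only gives $\int_{D'}K=2\pi-\sum\theta_i$ with $\sum\theta_i$ positive and of arbitrary size, so no contradiction with $\int_{D'}K<\int_{D_+}K=2\pi$ is forced. Concretely, nothing in your argument rules out the scenario where $\gamma$ is trapped in $\{r\ge\epsilon\}$ forever while every innermost disk cut out of the long arc has $\int K$ bounded well below $2\pi$. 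The clean way to close the Key Lemma, and essentially what the Fermi computation already proves locally, is to observe that $r=d(\cdot,c)$ is concave on all of $D_+$ (in the barrier sense past the cut locus: the Riccati equation gives $f_{rr}=-Kf<0$, so $f_r<0$ for $r>0$, and $\mathrm{Hess}\,r=ff_r\,ds^2\le0$); then $t\mapsto r(\gamma(t))$ is a bounded concave function, which cannot be constant because the level circles $\{r=\mathrm{const}>0\}$ have nonzero geodesic curvature, so it eventually strictly decreases and your collar argument finishes the job. This extends the local concavity you already established to all of $D_+$ and replaces the Gauss--Bonnet/recurrence paragraph entirely.
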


The second one is due to Hofer, Wyzocki and Zehnder and ensures the existence of a disk-like global surface of section for Reeb flows on dynamically convex three spheres.

\begin{theorem}[Theorem 1.3 in \cite{hofer1998dynamics}]\label{hwzdyn}
Let $\lambda$ be a dynamically convex contact form on $S^3$. Then there exists a simple Reeb orbit with Conley--Zehnder index $3$ that bounds a disk-like global surface of section for the Reeb flow.
\end{theorem}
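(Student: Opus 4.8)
The plan is to realize the binding orbit together with the disk it bounds as a degenerate limit of a Bishop-type family of pseudoholomorphic disks in the symplectization of $(S^3,\lambda)$, and then to show that the remaining members of that family sweep out the complement of the binding, assembling into an open book whose pages are disk-like global surfaces of section.

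First I would put $(S^3,\ker\lambda)$ in normal form. Since $S^3$ carries a unique tight contact structure, after a diffeomorphism one has $\lambda=f\lambda_0$ with $f>0$, so $(S^3,\lambda)$ is the boundary of a star-shaped domain in $(\R^4,\omega_0)$. Fix a generic symplectization-admissible $J$ on $\R\times S^3$ which is standard near a chosen point $p$, and consider moduli spaces of finite-energy $J$-holomorphic planes $\widetilde u=(a,u)\colon\C\to\R\times S^3$ with one positive puncture asymptotic to a Reeb orbit. Start a Bishop family of small pseudoholomorphic disks near $p$ and pass to the maximal connected family through it. Because these disks carry a uniform area lower bound they cannot collapse to a point, so along the boundary of the family Hofer--Wysocki--Zehnder compactness produces, modulo bubbling and breaking, a limit configuration containing a nonconstant finite-energy plane $\widetilde u_0$ asymptotic to a Reeb orbit $P$.

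The next step, and the technical heart, is the asymptotic analysis. With respect to a global trivialization $\tau$ of $\xi$, a finite-energy plane asymptotic to a contractible orbit $\gamma$ has Fredholm index $CZ_\tau(\gamma)-1$. Dynamical convexity, i.e. $CZ(\gamma)\ge 3$ for every contractible orbit $\gamma$, rules out the dangerous degenerations in the compactness step (no non-trivial plane can be asymptotic to a $CZ\le 2$ orbit) and, restricting to the expected-dimensional component containing $\widetilde u_0$, pins down $CZ_\tau(P)=3$, the minimal value. I would then run the Hofer--Wysocki--Zehnder winding-number estimates — comparing the winding of the asymptotic eigenfunction of $\widetilde u_0$ at $P$ against the bound imposed by $CZ_\tau(P)=3$ — to conclude that $\widetilde u_0$ is somewhere injective, in fact embedded and fast, that $u_0\colon\C\to S^3$ is an embedding onto an embedded open disk disjoint from $P$, and that its closure is an embedded closed disk $D$ with $\partial D=P$; transversality of the Reeb flow to $\mathrm{int}\,D$ comes from $\int_D u_0^*d\lambda>0$ together with positivity of intersections between Reeb trajectories and the $J$-holomorphic curve. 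I expect this asymptotics/transversality package — excluding slow, broken, or multiply covered limits and extracting a single embedded fast plane asymptotic to a $CZ=3$ orbit — to be the main obstacle, and it is exactly where dynamical convexity is indispensable.

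Finally I would propagate this to a foliation. Let $\mathcal M$ denote the moduli space of embedded fast finite-energy planes asymptotic to $P$. Automatic transversality, available in this genus-zero, low-index situation, shows $\mathcal M/\R$ is a $1$-manifold; positivity of intersections shows that distinct leaves are disjoint; and the compactness argument from the previous step shows $\mathcal M/\R$ is compact without boundary, hence a circle. Its leaves together with $P$ foliate $S^3$, producing an open book decomposition with binding $P$ and disk pages. Each page is then a disk-like global surface of section: it is transverse to the flow on its interior, its boundary is the simple orbit $P$ with $CZ(P)=3$, and every Reeb trajectory must cross every page because $d\lambda>0$ on the pages forces the flow to advance monotonically through the open book. (Closer to the methods used in this paper, an $S^3$-analogue of case \ref{b} of Proposition \ref{curves} — an index-$2$ finite-energy plane asymptotic to a $CZ=3$ orbit — together with Siefring's intersection theory and the surface-of-section argument of \cite{cristofaro2019torsion} gives an alternative route, with dynamical convexity again the hypothesis that guarantees this genus-zero, one-puncture configuration occurs.)
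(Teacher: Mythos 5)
The paper does not prove this theorem; it is quoted as Theorem~1.3 of \cite{hofer1998dynamics}, and the only internal pointer is the remark following Proposition~\ref{gss2} that the nondegenerate case is recovered by running Propositions~\ref{curves} and~\ref{gss2} with $S^3$ in place of $\R P^3$ — on $S^3$ dynamical convexity ($CZ\ge 3$ for every Reeb orbit) collapses the Fredholm-index trichotomy of Proposition~\ref{curves} to case~\ref{b} alone, so the index-$2$ curve seen by the $U$ map is a plane asymptotic to a $CZ=3$ orbit, and its projection is a disk-like global surface of section. That is precisely your parenthetical alternative. Your main argument instead reconstructs the original Hofer--Wysocki--Zehnder route (Bishop family of pseudoholomorphic disks, SFT compactness producing a fast finite-energy plane, winding estimates and automatic transversality, propagation to a circle's worth of planes, open book with disk pages), which is a genuinely different and correct outline; you rightly flag the asymptotic analysis and exclusion of slow, broken, or multiply covered limits as the technical heart. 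The two routes buy different things: the HWZ argument is self-contained, needs no Taubes/Seiberg--Witten input, and treats degenerate contact forms directly, at the cost of heavy asymptotic analysis; the ECH route is much shorter once the ECH package and the surface-of-section criterion of \cite{cristofaro2019torsion} are assumed, but as the remark in the paper itself notes it only recovers the nondegenerate case. One small logical slip worth flagging: you write that the winding of the asymptotic eigenfunction is compared ``against the bound imposed by $CZ_\tau(P)=3$,'' but the argument runs the other way — fastness forces $\mathrm{wind}_\infty\le 1$, hence $CZ(P)\le 3$, while dynamical convexity supplies $CZ(P)\ge 3$; it is the conjunction of these two inequalities that pins down $CZ(P)=3$.
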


In this work, we shall say that $\Sigma$ is simply a global surface of section if it is for the Reeb flow. In \cite{cristofaro2019torsion}, a criterion is provided to assert if the projection $u$ in $Y$ of a $J$-holomorphic curve $\tilde{u} = (a,u) \colon C \to \R \times Y$ is a global surface of section. Denote by $h_+(C)$ the number of positive hyperbolic orbits that are ends of a curve $C$. Moreover, denote by $\mathcal{M}^J_C$ the component of the moduli space of $J$-holomorphic curves containing a fixed curve $C$.

\begin{prop}[Proposition 3.2 in \cite{cristofaro2019torsion}]\label{gss}
Let $(Y,\lambda)$ be a nondegenerate contact three-manifold, and let $J$ be a $\lambda$-compatible almost complex structure on $\R \times Y$. Let $C$ be an irreducible $J$-holomorphic curve in $\R \times Y$ such that
\begin{enumerate}[label=(\roman*)]
\item Every $C^\prime \in \mathcal{M}^J_C$ is embedded in $\R \times Y$.
\item $g(C) = h_+(C) = 0$ and $\text{ind}(C) = 2$.
\item $C$ does not have two positive ends, or two negative ends, at covers of the same simple Reeb orbit.
\item Let $\gamma$ be a simple Reeb orbit with rotation number $\theta \in \R / \Z$. If $C$ has a positive end at a $m$-fold cover of $\gamma$, then $\gcd(m,\lfloor m\theta \rfloor) = 1$. If $C$ has a negative end at a $m$-fold cover of $\gamma$, then $\gcd(m,\lceil m\theta \rceil) = 1$.
\item $\mathcal{M}^J_C/\R$ is compact. \\
Then $\pi_Y(C) \subset Y$ is a global surface of section for the Reeb flow.
\end{enumerate}
\end{prop}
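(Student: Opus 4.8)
The plan is to verify directly the three defining properties of a global surface of section for the set $\Sigma := \pi_Y(C) \cup \{\text{asymptotic Reeb orbits of } C\}$: that $\Sigma$ is a compact embedded surface whose boundary is a union of Reeb orbits, that the Reeb vector field $R$ is transverse to $\Sigma\setminus\partial\Sigma$, and that every Reeb trajectory through a point of $Y\setminus\partial\Sigma$ meets $\Sigma\setminus\partial\Sigma$ in both forward and backward time. I would split this into a local part (showing $\Sigma$ is an embedded surface transverse to $R$) and a global part (the return property, via the moduli space $\mathcal M^J_C$).

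\emph{Embeddedness and transversality.} Write $\widetilde u = (a,u)\colon \dot C \to \R\times Y$ for a parametrization of $C$, so $u = \pi_Y\circ\widetilde u$. In the interior, $u$ fails to be an immersion transverse to $R$ exactly at the zeros of $u^*d\lambda$, and fails to be injective exactly at pairs of points with a common image in $Y$; both phenomena carry positive local weights, and their total is bounded above by the relative adjunction formula of Hutchings together with the writhe inequality at the ends. The point of hypotheses (ii)--(iv) is to force this total to vanish: with $g(C)=0$ and $\text{ind}(C)=2$ the adjunction bound has no slack, while the gcd conditions (iv) ensure that near each puncture $u$ is already embedded --- by the asymptotic analysis of Hofer--Wysocki--Zehnder and Siefring it is an embedded spiral converging to the underlying \emph{embedded} Reeb orbit, transverse to $R$ and closing up into a smooth surface with boundary there --- so no weight can be concealed at the ends. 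Condition (iii) guarantees that distinct ends give distinct, disjoint boundary circles. Together these produce a compact embedded surface $\Sigma$ with $\partial\Sigma$ a union of Reeb orbits and $R$ transverse to $\Sigma\setminus\partial\Sigma$, i.e.\ properties (i) and (ii) of a surface of section. The hypothesis $h_+(C)=0$ enters here and below to co-orient $\Sigma$ and all nearby pages coherently, so that $R$ points transversally through each of them in the same sense.

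\emph{The return property.} Since $\text{ind}(C)=2$ and $J$ is generic, $\mathcal M^J_C$ is a $2$-manifold on which the $\R$-action is free, so $\mathcal M^J_C/\R$ is a connected $1$-manifold; by hypothesis (v) it is compact, hence a circle. By positivity of intersections of distinct somewhere-injective holomorphic curves, the surfaces $\pi_Y(C')$ for $C'\in\mathcal M^J_C$ are mutually disjoint and each embedded, sharing the common boundary $\partial\Sigma$; a Gromov-compactness argument together with (v) shows that the subset of $Y\setminus\partial\Sigma$ swept out by them is open and closed, hence all of it. Thus $\{\pi_Y(C')\}$ is an open book decomposition of $Y$ with binding $\partial\Sigma$ and pages indexed by the circle $\mathcal M^J_C/\R$. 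Since $R$ is positively transverse to every page, the circle-valued page coordinate strictly increases along the Reeb flow on $Y\setminus\partial\Sigma$; compactness of the circle of pages then forces every trajectory to re-cross the page $\Sigma$ in forward time, and symmetrically in backward time, with the crossings occurring in $\Sigma\setminus\partial\Sigma$ by transversality. This is property (iii).

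\emph{Expected main obstacle.} I expect the decisive difficulty to lie in the first part: extracting from $g(C)=0$, $\text{ind}(C)=2$ and the partition/gcd hypotheses the vanishing of the relative adjunction count, which is exactly what simultaneously forces $u$ to be embedded, transverse to $R$, and smoothly closed up along its asymptotic orbits. This demands the full ECH-index, writhe and adjunction machinery of Hutchings together with sharp asymptotic control of punctured holomorphic curves; once the open-book foliation is in place, the return argument is comparatively soft, the only delicate point being the behaviour of trajectories near the binding.
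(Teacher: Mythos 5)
Note first that the paper does not prove this proposition; it is quoted from \cite[Proposition~3.2]{cristofaro2019torsion}, so the comparison is with the argument there. Your overall architecture does match: establish that $\pi_Y(C)$ is an embedded surface with boundary the asymptotic Reeb orbits and with $R$ transverse to its interior, then use $\mathcal{M}^J_C/\R$ being a circle to produce a disjoint circle of pages foliating $Y\setminus\partial\Sigma$, and conclude the return property from the rotation of pages. Two of your justifications, however, have real gaps.

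The role you assign to $h_+(C)=0$ is not correct. Positive transversality of $R$ to $\pi_Y(C)$ away from the asymptotic orbits is automatic for any somewhere-injective non-trivial-cylinder $J$-holomorphic curve: it follows directly from $d\lambda(\cdot,J\cdot)>0$ on $\xi$, irrespective of the orbit types at the ends. What $h_+(C)=0$ actually controls is the ``trapping'' near the binding that property (iii) of a surface of section requires: near an elliptic or negative hyperbolic binding orbit the linearized Reeb flow rotates at a nonzero rate, so a nearby trajectory winds around the binding and must meet a page in bounded forward and backward time, whereas near a positive hyperbolic orbit the linearized flow has no rotation and trajectories on the stable/unstable manifold can linger near (or converge to) the binding without completing a circuit of pages. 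Second, disjointness of the pages does not follow from ``positivity of intersections'' alone: positivity only says that any actual intersection of two distinct irreducible curves contributes positively; to rule intersections out one must show that the \emph{relative} intersection number (of $C'$ with every $\R$-translate of $C''$) vanishes, which is a genuine computation in Siefring's intersection theory using the relative adjunction inequality, the ECH partition conditions, and the $\text{ind}=2$ bound. This is precisely the content of, e.g., \cite[Proposition~3.5]{hryniewicz2022genus}, which the present paper invokes for this purpose in the proof of Theorem~\ref{thmc1}; invoking only positivity leaves the foliation step unjustified.
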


Using this criterion, we obtain the following result.

\begin{prop}\label{gss2}
Under the hypotheses in Proposition \ref{curves}, suppose in addition there is no $J$-holomorphic plane in $\R \times \R P^3$ asymptotic to a Reeb orbit with Conley--Zehnder index $2$. Then,
\begin{enumerate}[label=(\alph*)]
\item The plane in Proposition \ref{curves} \ref{b} projects to a disk-like global surface of section in $\R P^3$.
\item If the ends of the cylinder in Proposition \ref{curves} \ref{c} are not at the same Reeb orbit, i.e., $\gamma_{c_1} \neq \gamma_{c_2}$, then its projection is an annulus-like global surface of section in $\R P^3$.
\end{enumerate}
\begin{proof}
It is enough to verify that these curves satisfy conditions $(i)$ to $(v)$ in Proposition \ref{gss}. In the proof of Proposition $\ref{curves}$, we saw that the curves have ends at admissible orbit sets, namely at $\alpha_{k_0}$ and $\emptyset$. Then, every curve in the component of moduli space of one of them, has ends at admissible orbit sets and ECH index $2$. So $(i)$ follows from \cite[Proposition 3.7]{hutchings2014lecture}. Since the possible ends have odd Conley--Zehnder index, they are not positive hyperbolic, and then $(ii)$ is readily verified. For the plane, $(iii)$ is clear. For the cylinder, it follows from our assumption. Now we recall that the Conley--Zehnder indices possibilites of the ends are $1$ and $3$. In any case, $m\theta \leq 3/2$, and hence, $(iv)$ also holds. Finally, let $C$ be the plane or the cylinder in discussion and suppose that $\mathcal{M}^J_C/\R$ is not compact. It follows from \cite[Lemma 5.12]{hutchings2014lecture} that there exists a sequence of $J$-holomorphic curves in $\mathcal{M}^J_C/\R$ converging to a \emph{broken holomorphic current} $(\mathcal{C}_+,\mathcal{C}_-)$ with $I(\mathcal{C}_+)=I(\mathcal{C}_-) = 1$. Moreover, $\mathcal{C}_+$ is positive asymptotic to $\alpha_{k_0}$ and $\mathcal{C}_-$ has no negative ends. By \cite[Proposition 3.7]{hutchings2014lecture}, $C_- := \mathcal{C}_-$ must be an embedded curve and $I(C_-) = \text{ind}(C_-)=1$. Let $\gamma^+_1,\ldots, \gamma^+_q$ be the positive ends of $C_-$. The Fredholm index equation yields
\begin{eqnarray*}
1 = \text{ind}(C_-) &=&  -\chi(C_-) + 2c_\tau(C_-) + \sum_{n = 1}^q CZ_\tau(\gamma^+_n) \\ &=& 2g(C_-) - 2 + q + \sum_{n = 1}^q CZ_\tau(\gamma^+_n) \\ &\geq & 2g(C_-) -2 + 2q,
\end{eqnarray*}
and so, $q=1$ and $g(C_-) = 0$. Therefore, $C_-$ must be a plane asymptotic to a Reeb orbit with Conley--Zehnder index $2$. This contradicts our hypotheses and then, $(v)$ must hold.
\end{proof}
\end{prop}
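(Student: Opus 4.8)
The plan is to apply the Cristofaro-Gardiner--Hutchings--Pomerleano criterion, Proposition~\ref{gss}, to the plane of Proposition~\ref{curves} \ref{b} and to the cylinder of Proposition~\ref{curves} \ref{c}, that is, to verify conditions $(i)$--$(v)$ for each of these two curves. Once this is done, the identification of the projection as a \emph{disk-like} (resp.\ \emph{annulus-like}) global surface of section is immediate, since a genus zero $J$-holomorphic curve with one (resp.\ two) punctures projects to a genus zero embedded surface with one (resp.\ two) boundary components. Throughout I would keep in mind that, by the proof of Proposition~\ref{curves}, each of these curves is the embedded curve underlying an ECH index $2$ holomorphic current obtained from the relation $Ux=[\emptyset]$ in $ECH_2(\R P^3,\xi,0)$; in particular it has positive ends at a fixed admissible orbit set $\alpha_{k_0}$ and no negative ends.

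Conditions $(i)$--$(iv)$ I expect to be bookkeeping. For $(i)$, every curve in the moduli component of $C$ has ends at the admissible orbit sets $\alpha_{k_0}$ and $\emptyset$ and ECH index $2$, hence is embedded by \cite[Proposition 3.7]{hutchings2014lecture} for generic $J$. For $(ii)$, $g(C)=0$ and $\mathrm{ind}(C)=2$ are given by Proposition~\ref{curves}, while $h_+(C)=0$ because every end has odd Conley--Zehnder index ($1$ or $3$) and a positive hyperbolic orbit has even index. Condition $(iii)$ is vacuous for the plane, and for the cylinder it holds precisely because of the standing hypothesis $\gamma_{c_1}\neq\gamma_{c_2}$. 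For $(iv)$, since each end has Conley--Zehnder index $1$ or $3$, the integer $\lfloor m\theta\rfloor$ attached to an $m$-fold cover end lies in $\{0,1\}$, and a short case check (elliptic versus negative hyperbolic) yields $\gcd(m,\lfloor m\theta\rfloor)=1$ in every case.

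The real content is condition $(v)$, compactness of $\mathcal{M}^J_C/\R$, and this is where the additional hypothesis --- the absence of a $J$-holomorphic plane asymptotic to a Conley--Zehnder index $2$ orbit --- enters. I would argue by contradiction: if $\mathcal{M}^J_C/\R$ failed to be compact, then by \cite[Lemma 5.12]{hutchings2014lecture} some sequence in it would converge to a broken holomorphic current $(\mathcal{C}_+,\mathcal{C}_-)$ with $I(\mathcal{C}_+)=I(\mathcal{C}_-)=1$, where $\mathcal{C}_+$ is positively asymptotic to $\alpha_{k_0}$ and $\mathcal{C}_-$ has no negative ends. By \cite[Proposition 3.7]{hutchings2014lecture}, $\mathcal{C}_-$ is then a single embedded curve $C_-$ with $I(C_-)=\mathrm{ind}(C_-)=1$; writing $\gamma_1^+,\dots,\gamma_q^+$ for its positive ends, the Fredholm index identity $1=\mathrm{ind}(C_-)=2g(C_-)-2+q+\sum_{n=1}^{q}CZ_\tau(\gamma_n^+)$ together with linear positivity ($CZ_\tau\geq 1$ for every Reeb orbit, as used in the proof of Proposition~\ref{curves}) forces $q=1$, $g(C_-)=0$ and $CZ_\tau(\gamma_1^+)=2$. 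Thus $C_-$ is a plane asymptotic to a Conley--Zehnder index $2$ orbit, contradicting the hypothesis; hence $(v)$ holds and Proposition~\ref{gss} gives the conclusion.

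The step I expect to be the genuine obstacle is exactly making $(v)$ airtight: one needs the structural input from ECH compactness that the broken limit has precisely two levels, each of ECH index $1$, that the intermediate orbit set at which the breaking occurs is admissible so that \cite[Proposition 3.7]{hutchings2014lecture} truly applies to $\mathcal{C}_-$, and that $\mathcal{C}_-$ really does have empty negative asymptotics, inherited from $C$. Granting these gluing/compactness facts, what remains is just the short index arithmetic above.
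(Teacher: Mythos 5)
Your proof follows essentially the same route as the paper's: you verify conditions $(i)$--$(v)$ of Proposition~\ref{gss} for the plane and the cylinder, with $(i)$--$(iv)$ being the same bookkeeping (Proposition~3.7 of \cite{hutchings2014lecture} for embeddedness of the moduli component, odd Conley--Zehnder indices ruling out positive hyperbolic ends, the hypothesis $\gamma_{c_1}\neq\gamma_{c_2}$ for $(iii)$, and the fact that $\lfloor m\theta\rfloor\in\{0,1\}$ for $(iv)$), and the real content being $(v)$, established by the same contradiction: an SFT/ECH broken limit $(\mathcal{C}_+,\mathcal{C}_-)$ would, by the Fredholm index computation and linear positivity, force $\mathcal{C}_-$ to be a plane asymptotic to a Conley--Zehnder index~$2$ orbit, contradicting the standing hypothesis. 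The structural inputs you flag at the end (two-level breaking with $I=1$ each, admissibility of the intermediate orbit set, and $\mathcal{C}_-$ inheriting empty negative asymptotics) are exactly what the paper supplies by citing \cite[Lemma 5.12]{hutchings2014lecture} and \cite[Proposition 3.7]{hutchings2014lecture}.
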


\begin{rmk}
One can repeat the discussion above for the case of $Y = S^3$ and combining Propositions \ref{curves} and \ref{gss2}, it recovers Theorem \ref{hwzdyn} in the nondegenerate case.
\end{rmk}

\subsection{Elliptic Reeb orbits on some $\R P^3$}

Propositions \ref{curves} and \ref{gss2} together with a characterization of the tight $S^3$ in \cite{hofer1995characterisation}, lead us to the proof of Theorem \ref{contractibleind2}.

\begin{proof}[Proof of Theorem \ref{contractibleind2}]
Since $\pi_1(\R P^3) = \Z_2$, $\lambda$ does not admit a hyperbolic Reeb orbit with Conley--Zehnder index $1$. Indeed, if it does, the double iterate of such an orbit would be a contractible orbit with Conley--Zehnder index $2$ which contradicts our hypothesis. Now, we claim that the three cases in Proposition \ref{curves} imply the existence of an elliptic orbit with Conley--Zehnder index $1$. It is clear for cases \ref{a} and \ref{c}. Suppose then \ref{b} holds, i.e., there exists an ECH index $2$ $J$-holomorphic plane in $\R \times \R P^3$ asymptotic to a Reeb orbit $\gamma$ such that $CZ(\gamma) = 3$. If $\gamma$ were simple, it would be a contractible, simple and nondegenerate Reeb orbit. Moreover, by Proposition \ref{gss2}, $\gamma$ would bound a disk-like global surface of section for the Reeb flow in $\R P^3$, namely the projection of the latter plane. In this situation, \cite[Theorem 1.4]{hofer1995characterisation} leads us to a contradiction yielding that $(\R P^3,\xi)$ is contactomorphic to the tight $(S^3,\xi_0)$. Therefore, $\gamma$ cannot be simple and must be a covering of an elliptic Reeb orbit with Conley--Zehnder index $1$.
\end{proof}

Similarly, we prove Theorem \ref{embeddedind2}.

\begin{proof}[Proof of Theorem \ref{embeddedind2}]
With the additional hypothesis of $c_1(\R P^3,\lambda) = \mathcal{A}_{min}^0(\lambda)$, we can take the orbit set $\alpha_{k_0}$ such that $\mathcal{A}(\alpha_{k_0}) = \mathcal{A}_{min}^0(\lambda)$. In particular, the moduli space of planes as in \ref{b} in Proposition \ref{curves} is still compact. Moreover, by the hypothesis on the Conley--Zehnder index $2$ Reeb orbits, the possibilities \ref{a} and \ref{c} in Proposition \ref{curves} still fit in the conclusion. Hence the proof follows in the same way of the proof of Theorem \ref{contractibleind2}. The action range claimed in the statement of Theorem \ref{embeddedind2} follows from the fact that $\mathcal{A}(\alpha_{k_0}) = \mathcal{A}_{min}^0(\lambda)$ is an element in the set $\{\mathcal{A}(\gamma_a),\mathcal{A}(\gamma_b),\mathcal{A}(\gamma_{c_1})+\mathcal{A}(\gamma_{c_2})\}$.
\end{proof}

\section{ECH of Finsler Spheres}\label{finsler}
\subsection{From Finsler to Contact Geometry}
In this section, we follow \cite{hryniewicz2013introduccao} and summarize the dictionary relating Finsler metrics on manifolds and the Hilbert contact form on the unit tangent bundle associated to this metric.

Let $N$ be a smooth manifold and $F \colon TN \to [0,+\infty)$ be a Finsler metric on $N$, that is, $F$ is a continuous map which satisfies
\begin{enumerate}[label=(\roman*)]
\item (smoothness) $F$ is smooth on $TN\backslash N$, i.e., away from the zero section.
\item (homogeneity) $F(tv) = tF(v)$, for all $v \in TN$ and $t \in \R_{>0}$.
\item (convexity) The symmetric bilinear form 
\begin{eqnarray}\label{bilinearfinsler}
g_v\colon T_qN \times T_qN &\to& \R \\ \nonumber
(u,w) &\mapsto& \frac{1}{2} \frac{\partial^2}{\partial t \partial s} F^2(v+su+tw)|_{s=t=0}
\end{eqnarray}
is positive-definite for all $v \in T_qN\backslash \{0\}$ and every $q \in N$. 
\end{enumerate}

Note that given a Riemannian metric $g$ on the manifold $N$, one can define a Finsler metric by $F(v) := \sqrt{g(v,v)}$, and hence, Finsler metrics generalizes the notion of Riemannian metrics. Moreover, a Finsler metric gives a natural identification between the tangent bundle and the cotangent bundle of the given manifold $N$. In fact, the \emph{Legendre transformation} defined by
\begin{eqnarray}\label{legendretr}
\mathcal{L}_F \colon TN\backslash N &\to& T^*N\backslash N \\ \nonumber
v &\mapsto& g_v(v,\cdot)
\end{eqnarray}
is a diffeomorphism.

We then define the unit tangent bundle of $N$ and the unit cotangent bundle of $N$ by
$$S_FN := F^{-1}(1) = \{v \in TN \mid F(v) = 1\},$$
and
$$S_F^*N := \{p \in T^*N \mid F(\mathcal{L}_F^{-1}(p)) = 1\},$$
respectively. Note that these two are odd dimensional manifolds (codimension $1$ submanifolds on the bundles $TN$ and $T^*N$, respectively). It is well known that the tautological one form $\lambda_{taut}$ on $T^*N$ restricts to a contact form on $S_F^*N$. Moreover, similarly to the definition of $\lambda_{taut}$, one defines the Hilbert form $\lambda_F$ on $TN$ by
\begin{equation}\label{hilbform}
(\lambda_F)_v(\zeta) = g_v(v,d\pi \cdot \zeta),
\end{equation}
for any $v \in TN$, $\zeta \in T_vTN$ and where $\pi \colon TN \to N$ is the natural projection. In fact, the Legendre transformation $\mathcal{L}_F$ interchanges these two forms, i.e., $\mathcal{L}_F^*\lambda_{taut} = \lambda_F$. In particular, $\lambda_F$ restricts to a contact form on the unit tangent bundle $S_FN$.

It is a simple exercise to check that the contact structure $\xi_F = \ker \lambda_F$ does not depend essentially on the metric. More precisely, given two Finsler metrics $F_1,F_2$ on $N$, there exists a contactomorphism $\phi\colon (S_{F_1}N,\xi_{F_1}) \to (S_{F_2}N,\xi_{F_2})$, meaning that $\phi$ is a diffeomorphism such that $\phi_* \xi_{F_1} = \xi_{F_2}$. In particular, for $N = S^2$, the round metric $g_0$ is such that $(S_{g_0}S^2,\xi_{g_0})$ is contactomorphic to the standard tight $(\R P^3,\xi_0)$, and hence, the contact structure defined by any Finsler metric on the sphere $S^2$ is tight.

The following result shows that the Reeb vector field for the Hilbert form $\lambda_F$ agrees with a well known vector field in differential geometry.

\begin{prop}[Teorema 4.4.10 in \cite{hryniewicz2013introduccao}] \label{flowsunit}
Let $F$ be a Finsler metric defined on a smooth manifold $N$. Then the Reeb vector field of $(S_FN,\lambda_F)$ agrees with the Geodesic vector field for $F$.
\end{prop}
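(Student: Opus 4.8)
The plan is to identify the Hilbert form $\lambda_F$ with the Poincaré--Cartan one-form of the energy Lagrangian $L := \tfrac12 F^2$ on $TN$, to identify $S_FN$ with a regular level set of the associated energy function, and to identify the geodesic vector field with the Lagrangian spray $G$, i.e.\ the unique second-order vector field on $TN\setminus N$ satisfying $\iota_G\,\omega_L = dE_L$, where $\omega_L := -d\lambda_F$ and $E_L := \sum_i v^i\partial_{v^i}L - L$. Granting these identifications, the proposition reduces to checking three things along $S_FN$: that $G$ is tangent to $S_FN$, that $G$ lies in the kernel of $d\lambda_F|_{S_FN}$, and that $\lambda_F(G)\equiv 1$. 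The first two, combined with the fact that the restriction of the symplectic form $\omega_L$ to a hypersurface has a one-dimensional kernel, show that $G$ spans the characteristic line field of $d\lambda_F|_{S_FN}$; together with the normalization $\lambda_F(G)=1$ this shows at once that $\lambda_F|_{S_FN}$ is a contact form and that $G|_{S_FN}$ is its Reeb vector field, since it satisfies the two defining equations $d\lambda_F(G,\cdot)=0$ and $\lambda_F(G)=1$.

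All three checks are instances of Euler's homogeneity theorem for $F^2$, which is homogeneous of degree $2$ in the fiber variable $v$. Since $g_v$ is by definition the fiberwise Hessian of $\tfrac12 F^2$, Euler's identity $\sum_j v^j\,\partial_{v^i}\partial_{v^j}(\tfrac12 F^2) = \partial_{v^i}(\tfrac12 F^2)$ shows $g_v(v,\partial_{q^i}) = \partial_{v^i}L$, hence in canonical coordinates $\lambda_F = \sum_i (\partial_{v^i}L)\,dq^i = \theta_L$, the Poincaré--Cartan form. Next, $\sum_i v^i\partial_{v^i}L = 2L$ gives $E_L = 2L - L = L = \tfrac12 F^2$, so $S_FN = \{F=1\} = \{E_L = \tfrac12\}$ is a regular level and, $E_L$ being a first integral of $G$, the spray $G$ is tangent to it; moreover $\iota_G\,d\lambda_F = -\iota_G\,\omega_L = -dE_L$ annihilates $TS_FN$. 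Finally, since $G$ is second-order, $dq^i(G) = v^i$, so $\lambda_F(G) = \theta_L(G) = \sum_i v^i\,\partial_{v^i}L = 2L = F^2 \equiv 1$ on $S_FN$. This completes the argument.

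The step requiring the most care is the bookkeeping of homogeneity constants --- getting $\lambda_F = \theta_L$ straight from the definition of $g_v$, and making the normalizing scalars ($F^2$ versus $2L$ versus $1$) land in the right places; using the degree-$1$ quantity $F$ where the degree-$2$ quantity $F^2$ is needed, or vice versa, is exactly the mistake to avoid. Everything else is formal. An equivalent and possibly cleaner route is to push the problem to the cotangent bundle via the Legendre diffeomorphism $\mathcal{L}_F$: since $\mathcal{L}_F^*\lambda_{taut} = \lambda_F$ and $\mathcal{L}_F$ conjugates the geodesic flow on $TN$ to the Hamiltonian flow of the dual co-Finsler Hamiltonian $H$ on $T^*N$, the claim reduces to the classical statement that the Reeb vector field of $\lambda_{taut}$ on a fiberwise star-shaped hypersurface $\{H=1\}$ is the Hamiltonian vector field $X_H$ --- which again follows from the Euler computation $\lambda_{taut}(X_H) = \sum_i p_i\,\partial_{p_i}H = H \equiv 1$ on that level set.
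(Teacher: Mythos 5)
The paper does not actually prove Proposition~\ref{flowsunit}: it is invoked purely by citation (``Teorema 4.4.10'' of the Portuguese lecture notes \cite{hryniewicz2013introduccao}), so there is no in-paper proof against which to compare. Your argument is nonetheless a complete and correct proof of the cited fact, and it follows what is essentially the standard route. You correctly observe that $g_v$ is the fiberwise Hessian of $L=\tfrac12 F^2$, so Euler's identity for the degree-$2$ function $F^2$ gives $g_v(v,\cdot)=\sum_i(\partial_{v^i}L)\,dq^i$, i.e.\ $\lambda_F=\theta_L$, the Poincaré--Cartan form; $2$-homogeneity gives $E_L=L$, so $S_FN$ is the regular level $\{E_L=\tfrac12\}$; the geodesic spray $G$ is the Lagrangian vector field with $\iota_G\omega_L=dE_L$, hence $dE_L(G)=\omega_L(G,G)=0$ shows $G$ is tangent to $S_FN$ and $\iota_G d\lambda_F=-dE_L$ kills $TS_FN$; and $\lambda_F(G)=\sum_i v^i\partial_{v^i}L=2L=F^2\equiv1$ on $S_FN$. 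Those three facts are exactly the Reeb conditions. One small presentational circularity: you define $\omega_L:=-d\lambda_F$ before establishing $\lambda_F=\theta_L$, whereas logically $\omega_L$ should be defined as $-d\theta_L$ and then identified with $-d\lambda_F$; this is cosmetic, not a gap. Your alternative route through the Legendre diffeomorphism, using the identity $\mathcal{L}_F^*\lambda_{taut}=\lambda_F$ (which the paper itself records) to reduce to the Hamiltonian/cotangent picture, is equally standard and equally correct. No gaps.
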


Thus, given a closed geodesic parametrized by arc length $c \colon I \to N$, one has a corresponding Reeb orbit $\gamma = (c,\dot{c})\colon I \to S_FN$. In fact, whenever $F$ is reversible\footnote{That is, $F(v) = F(-v) \ \forall v \in TN$.}, such a geodesic $c$ gives rise to two Reeb orbits $(c,\pm \dot{c})$ on $(S_FN,\lambda_F)$. It is simple to check that
$$\mathcal{A}(c,\dot{c}) = \int_{(c,\dot{c})} \lambda_F = \int_I F(\dot{c}) dt = Length(c).$$
Further, the linearized Poincaré map for the Reeb orbit $\gamma = (c,\dot{c})$ is conjugated to the linear Poincaré map for the geodesic $c$ defined using Jacobi fields, see  \cite[Lemma 2.3]{hryniewicz2013global}. In addition, it follows from \cite{liu2005relation} that the Conley--Zehnder index of $\gamma$ with respect to a trivialization of $\xi_F$ that extends to a disk bounding the orbit coincides with the Morse index of the closed geodesic $c$.

\subsection{First value on ECH spectrum for $1/4$-pinched metrics}
In this section we compute the first value on the ECH spectrum for the $1/4$-pinched Riemannian case, proving Theorem \ref{thmc1}. We shall use two lemmas. The first one uses Klingenberg \cite[Theorem 2.6.9]{klingenberg1995riemannian} and Toponogov \cite[Theorem 2.7.12]{klingenberg1995riemannian} estimates to guarantee that $\mathcal{A}_{min}^0(\lambda_g) = 2L$ for the $1/4$-pinched Riemannian case.

\begin{lemma}\label{a0=2Llemma}
Let $(S^2,g)$ be a Riemannian sphere such that $1/4 < K \leq 1$, where $K$ is the sectional curvature. Hence
\begin{equation}\label{a0=2L}
4\pi \leq \mathcal{A}_{min}^0(\lambda_g) = 2L < 8\pi,
\end{equation}
where $L$ is the length of a shortest geodesic for $g$.
\begin{proof}
Note that a null-homologous orbit set with minimal action must be of the form $\gamma$ such that $\mathcal{A}(\gamma) = \mathcal{A}^0_{min}(\lambda_g)$ ,$\gamma_1\gamma_2$ or $\widetilde{\gamma}^2$, where each of these latter Reeb orbits corresponds to closed geodesics with minimal length on $(S^2,g)$. This holds due to $\pi_1(\R P^3) = \Z_2$ and to the fact that a smooth curve $c \colon I \to S^2$ parametrized by arc length and which all self-intersections are transverse induces a contractible curve $(c,\dot{c}) \colon I \to S_gS^2 \cong \R P^3$ if, and only if, $c$ has an odd number of self-intersections. Hence, $\mathcal{A}_{min}^0(\lambda_g)$ must agree with the smallest element in the set $\{\mathcal{A}(\gamma),\mathcal{A}(\gamma_1)+\mathcal{A}(\gamma_2),2\mathcal{A}(\widetilde{\gamma})\} = \{\mathcal{A}(\gamma),2L\}$, where $L$ is the length of a shortest closed geodesic. Since $1/4 < K \leq 1$, it follows from Klingenberg and Toponogov comparison theorems that a closed geodesic for $g$ either is simple with length in the interval $[2\pi, 4\pi)$ or have at least two self-intersections and length $\geq 6\pi$. Therefore, if $\gamma$ is a null-homologous Reeb orbit, $\gamma$ must correspond to a closed geodesic $c_\gamma$ with at least three self-intersections and by Klingenberg's estimate\footnote{Klingenberg's estimate ensures that the injectivity radius, $\text{inj}(p)$, is at least $\pi$ for every $p$ in $S^2$, yielding that geodesic loops have length $\geq 2\pi$.}, $\mathcal{A}(\gamma) = Length(c_\gamma) \geq 8 \pi$, and hence, we obtain \eqref{a0=2L}.
\end{proof}
\end{lemma}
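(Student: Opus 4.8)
The plan is to pass to the geodesic picture via Proposition~\ref{flowsunit}, reduce $\mathcal{A}^0_{\min}(\lambda_g)$ to a short list of candidate null-homologous orbit sets, and then control the ``long'' candidates using the comparison theorems of Klingenberg and Toponogov.

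First I would record the dictionary: since $g$ is reversible, a closed geodesic $c$ of $(S^2,g)$ parametrized by arc length produces the two Reeb orbits $(c,\pm\dot c)$ of $\lambda_g$ on $S_gS^2\cong\R P^3$, each of action $\mathrm{Length}(c)$, and every Reeb orbit of $\lambda_g$ arises this way up to iteration. The combinatorial input is the classical fact that a primitive closed geodesic lifts to a \emph{contractible} Reeb orbit in $S_gS^2$ if and only if it has an odd number of transverse self-intersections --- this can be read off from the identification $\pi_1(S_gS^2)=\pi_1(\R P^3)=\Z_2$ with generator a fibre of $S_gS^2\to S^2$ (and checked on the round model) --- so that, in particular, simple closed geodesics lift to non-contractible orbits and the $m$-fold iterate of a non-contractible simple orbit is null-homologous exactly when $m$ is even.

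Next, I would argue that a null-homologous orbit set realizing $\mathcal{A}^0_{\min}(\lambda_g)$ may be taken to be one of: (i) a single contractible primitive Reeb orbit $\gamma$, necessarily coming from a non-simple closed geodesic; (ii) a product $\gamma_1\gamma_2$ of two simple non-contractible Reeb orbits; or (iii) a double iterate $\widetilde\gamma^{\,2}$ of a simple non-contractible Reeb orbit. Indeed, splitting a null-homologous orbit set into its contractible and non-contractible constituents forces an even number (with multiplicities) of the latter, and unless the set is a single contractible orbit, replacing everything by two copies of the shortest \emph{simple} closed geodesic can only lower the total action. Hence $\mathcal{A}^0_{\min}(\lambda_g)=\min\{\mathcal{A}(\gamma),\,2L\}$, where $\gamma$ ranges over contractible primitive Reeb orbits and $L$ is the length of a shortest closed geodesic. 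At this point I would invoke Klingenberg's injectivity-radius estimate \cite[Theorem~2.6.9]{klingenberg1995riemannian} (so $\mathrm{inj}\geq\pi$, hence every geodesic loop, and in particular every closed geodesic, has length $\geq 2\pi$) together with Toponogov's theorem \cite[Theorem~2.7.12]{klingenberg1995riemannian} to obtain the dichotomy that, under $1/4<K\leq1$, a closed geodesic is either simple with length in $[2\pi,4\pi)$ or has at least two self-intersections and length $\geq 6\pi$; in particular the shortest closed geodesic is simple and $L\in[2\pi,4\pi)$, so $2L\in[4\pi,8\pi)$.

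It then remains to show that case~(i) never beats $2L$, i.e.\ that every contractible primitive Reeb orbit has action $\geq 8\pi>2L$. Such an orbit comes from a non-simple closed geodesic with an odd number of self-intersections; by the dichotomy that number is $\geq 2$, hence $\geq 3$, and I would cut the geodesic at its self-intersection points into geodesic loops, each of length $\geq 2\pi$ by Klingenberg's estimate, extracting four of them so that the total length is $\geq 8\pi$. Combining this with the cheap upper bound $\mathcal{A}^0_{\min}(\lambda_g)\leq 2L$ from (ii)/(iii) gives $\mathcal{A}^0_{\min}(\lambda_g)=2L$, and the bounds of \eqref{a0=2L} follow from $L\in[2\pi,4\pi)$. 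I expect the delicate point to be exactly this last step: turning ``at least three self-intersections'' into ``length $\geq 8\pi$'' requires a careful accounting of how the self-intersection points are distributed among the sub-loops (so that three of them really do produce four loops), or else a sharper comparison-geometry input; one must also be mindful of non-generic intersection patterns (triple points and the like) and of the fact that only the mod-$2$ count of self-intersections governs (non-)contractibility.
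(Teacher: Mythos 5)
Your proposal follows essentially the same route as the paper: pass to the geodesic picture, characterize contractibility of lifts via the parity of the self-intersection number, reduce $\mathcal{A}^0_{\min}$ to $\min\{\mathcal{A}(\gamma),2L\}$, invoke the Klingenberg/Toponogov dichotomy, and then bound contractible orbits from below by decomposing the corresponding (non-simple, odd-self-intersection) geodesic into geodesic loops. The delicate point you flag at the end is exactly the one the paper leaves implicit: the paper's footnote only records that each geodesic loop has length $\geq 2\pi$ and asserts length $\geq 8\pi$ ``by Klingenberg's estimate,'' but extracting \emph{four} contiguous geodesic loops from three transverse double points is a nontrivial combinatorial claim about the Gauss code (interleaved double points can obstruct such a decomposition), so your caveat that one needs either a careful accounting of the intersection pattern or a genuinely sharper comparison input is a legitimate and well-placed observation rather than a defect of your argument alone.
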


The second Lemma will be useful to extend the computation of $c_1(S_gS^2,\lambda_g)$ to the degenerate case.

\begin{lemma}\label{substoL}
Let $\{g_n\}_{n \in \mathbb{N}}$ be a sequence of Riemannian metrics with sectional curvature $K_n>0$ on the two dimensional sphere $S^2$ converging to a positively curved Riemannian metric $g$ in the $C^0$-topology. Suppose that the sequence consisting of lengths, $L_n$, of shortest closed geodesics for $g_n$ converges to a positive real number $L$. Then $L$ is the length of a shortest closed geodesic for $g$.
\begin{proof}
By a well known Calabi-Cao result \cite[Theorem D]{calabi1992simple} (see also the Appendix due to Abbondandolo and Mazzucchelli in \cite{benedetti2022relative}), $L_n$ is the length of a simple closed geodesic $\gamma_n$ which is the Birkhoff minmax geodesic:
$$L_n = \mathrm{bir}(S^2,g_n) := \inf_{u \in \mathcal{U}} \max_{z\in [-1,1]} E_n(u(z))^{1/2}.$$
Here, for each $n\in \mathbb{N}$, $E_n$ denotes the energy functional
\begin{eqnarray*}
E_n \colon W^{1,2}(S^1,S^2) &\to& [0,\infty) \\
\zeta &\mapsto& E_n(\zeta) = \int_{S^1} \Vert \dot{\zeta} \Vert_{g_n} dt
\end{eqnarray*}
defined on the $W^{1,2}$ free loop space and $\mathcal{U} \subset C^0([-1,1],W^{1,2}(S^1,S^2))$ is the space of \emph{sweepouts} consisting of suitable one parameter families of closed curves starting and ending at point curves. Similarly, since $g$ is also positively curved, the length of a shortest geodesic for $g$ coincides with $\mathrm{bir}(S^2,g)$. The Lemma then follows from the continuity of the Birkhoff minmax value with respect to the metric. In fact, one can check that
$$L = \lim_{n \to +\infty} L_n = \lim_{n \to +\infty} \mathrm{bir}(S^2,g_n) = \mathrm{bir}(S^2,g).$$
\end{proof}
\end{lemma}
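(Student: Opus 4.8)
The plan is to reduce the statement to the continuity of the Birkhoff minmax length $\mathrm{bir}(S^2,\cdot)$ under $C^0$-perturbation of the metric, and then to quote the Calabi--Cao theorem, which in positive curvature identifies $\mathrm{bir}(S^2,h)$ with the length of a shortest closed geodesic of $h$. It is worth isolating why a more hands-on argument is not available: one would like to take shortest geodesics $\gamma_n$ for $g_n$, rescale their domains (harmless since $L_n\to L>0$), extract an Arzel\`a--Ascoli limit $\gamma$, and argue that $\gamma$ is a shortest closed $g$-geodesic. But $g_n\to g$ only in $C^0$, so the Christoffel symbols of $g_n$ need not converge, the geodesic equation does not pass to the limit, and $\gamma$ need not be a $g$-geodesic at all. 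The minmax quantity, by contrast, is manufactured from the length functional alone --- a $C^0$-continuous datum --- over the fixed sweepout space $\mathcal{U}\subset C^0([-1,1],W^{1,2}(S^1,S^2))$, which does not see the metric; this is exactly what rescues the argument.

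First I would record the two applications of Calabi--Cao that are needed: by \cite[Theorem D]{calabi1992simple} (together with the appendix of \cite{benedetti2022relative}), each positively curved $g_n$ satisfies $L_n=\mathrm{bir}(S^2,g_n)$, and the limit metric $g$, which is positively curved by hypothesis, likewise satisfies that the length of its shortest closed geodesic equals $\mathrm{bir}(S^2,g)$. Hence everything reduces to proving $\mathrm{bir}(S^2,g_n)\to\mathrm{bir}(S^2,g)$.

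For that I would use a pointwise sandwiching of the metrics. Given $\epsilon\in(0,1)$, $C^0$-convergence furnishes $N$ with $(1-\epsilon)\,g\le g_n\le(1+\epsilon)\,g$ as quadratic forms at every point of $S^2$ for all $n\ge N$; taking square roots and integrating along any loop $\zeta$ gives $(1-\epsilon)^{1/2}\ell_g(\zeta)\le\ell_{g_n}(\zeta)\le(1+\epsilon)^{1/2}\ell_g(\zeta)$, where $\ell_h$ denotes $h$-length (this is the functional $E_h(\,\cdot\,)^{1/2}$ of the statement evaluated on constant-speed parametrizations). Since $\inf\max$ is monotone in the functional and homogeneous under scaling, and since the sweepout family $\mathcal{U}$ is the same for every metric, one obtains $(1-\epsilon)^{1/2}\mathrm{bir}(S^2,g)\le\mathrm{bir}(S^2,g_n)\le(1+\epsilon)^{1/2}\mathrm{bir}(S^2,g)$ for $n\ge N$. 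Letting $n\to\infty$ and then $\epsilon\to0$ gives $\mathrm{bir}(S^2,g_n)\to\mathrm{bir}(S^2,g)$, and combining with the two Calabi--Cao identities yields $L=\lim_nL_n=\lim_n\mathrm{bir}(S^2,g_n)=\mathrm{bir}(S^2,g)$, which is the length of a shortest closed geodesic for $g$.

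The main obstacle is not in the estimates, which are elementary, but in making sure the cited input is strong enough: one must know that in positive curvature the Birkhoff minmax closed geodesic is genuinely the \emph{shortest} one --- not merely some closed geodesic --- and that this applies uniformly to all the metrics in play, the $g_n$'s and the limit $g$ alike. Granting that, the proof is just the two displayed chains of inequalities together with the metric-independence of $\mathcal{U}$.
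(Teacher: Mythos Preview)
Your proposal is correct and follows essentially the same approach as the paper: reduce to continuity of $\mathrm{bir}(S^2,\cdot)$ under $C^0$-convergence of the metric, and invoke Calabi--Cao to identify the Birkhoff minmax value with the shortest closed geodesic length on both the $g_n$ and $g$. The only difference is that you actually spell out the continuity step via the sandwiching $(1-\epsilon)g\le g_n\le(1+\epsilon)g$, whereas the paper simply asserts ``one can check that'' the limit holds.
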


In particular, the conclusion in the Lemma holds if $\{g_n\}_{n \in \mathbb{N}}$ is a sequence satisfying $K_n \geq \delta>0$ and $C^2$-converges to a Riemannian metric $g$, since in this case $K = \lim_{n \to +\infty} K_n$ is automatically positive. Now we are ready to prove Theorem \ref{thmc1}.

\begin{proof}[Proof of Theorem \ref{thmc1}]
Suppose first that $g$ is a bumpy metric. In this case, the contact form $\lambda_g$ is nondegenerate, and hence, for a generic almost complex structure $J$ we have a well defined homology $ECH_*(S_gS^2,\lambda_g,\Gamma,J)$. Since $1/4 < K \leq 1$, it follows from \cite[Theorem 4.2]{MR727711} that any shortest closed geodesic\footnote{That is, a closed geodesic with minimal length among all closed geodesics for $g$.} for $g$ is simple and has Morse index $1$. Let $\gamma$ and $\overline{\gamma}$ be the two Reeb orbits corresponding to a shortest closed geodesic traversed in both directions on $S^2$. Note that we have
$$\mathcal{A}(\gamma\overline{\gamma}) = 2L = \mathcal{A}_{min}^0(\lambda_g),$$
where the last equality follows from Lemma \ref{a0=2Llemma}.
Since $c_1(S_gS^2,\lambda_g) \geq \mathcal{A}_{min}^0(\lambda_g)$, it is enough to prove that $\gamma\overline{\gamma}$ represents an element in homology and $U([\gamma\overline{\gamma}]) = [\emptyset]$.

First, we claim that $\gamma \overline{\gamma}$ is closed, that is, $\partial (\gamma \overline{\gamma}) = 0$. Note that $\langle \partial (\gamma\overline{\gamma}),\beta \rangle = 0$ for $\beta \neq \emptyset$ because the differential decreases the action and $\gamma \overline{\gamma}$ has the minimal action among the null-homologous orbit sets. Moreover, if $\langle \partial(\gamma \overline{\gamma}),\emptyset \rangle \neq 0$, there would exist an embedded $J$-holomorphic curve $C$ in $\R \times S_gS^2$ such that $I(C)=\text{ind}(C) = 1$. In this case,
\begin{eqnarray*}
1 = \text{ind}(C) &=& 2g(C) -2 + 2 + CZ(\gamma) + CZ(\overline{\gamma}) \\ &=& 2g(C) + 1 + 1 \geq 2
\end{eqnarray*}
leads us to a contradiction. Here we used that $CZ(\gamma) = CZ(\overline{\gamma}) = 1$ agrees with the Morse index of the corresponding geodesic which is equal to $1$. This proves the claim.

To prove that $U([\gamma\overline{\gamma}]) = [\emptyset]$, let $(0,y) \in \R \times S_gS^2$ be a fixed base point such that $y \in S_gS^2$ is a generic point which is not on any (closed) Reeb orbit and such that the $U$ map $U_{y,J}\colon ECC_*(S_gS^2,\lambda_g,0,J) \to ECC_*(S_gS^2,\lambda_g,0,J)$ is well defined. Since the $U$ map decreases the action, we have $\langle U(\gamma \overline{\gamma}), \beta \rangle = 0$ for every $\beta \neq \emptyset$ again.

By Theorem \ref{birkhoffthm}, the Birkhoff annulus $B_\gamma \subset S_gS^2$ is a global surface of section with boundary $\partial B_\gamma = \gamma \cup \overline{\gamma}$. This yields the existence of an open book decomposition of $S_gS^2$ supporting $\ker \lambda_g$, where $\gamma \cup \overline{\gamma}$ is the binding and whose pages are diffeomorphic to annuli. It follows from \cite[Proposition 3.16]{hryniewicz2022genus} that there exists (possibly other) open book decomposition supporting $\ker \lambda_g$ whose pages are annulus-like global surfaces of section for the Reeb flow on $(S_gS^2,\lambda_g$) and are projections of curves in $\mathcal{M}^J(\gamma\overline{\gamma},\emptyset)$. The latter denotes the moduli space of embedded genus zero $J$-holomorphic curves in $\R \times S_gS^2$ with exactly two positive ends converging asymptotically to $\gamma$ and $\overline{\gamma}$, and no negative ends.

Note that given a curve $C \in \mathcal{M}^J(\gamma\overline{\gamma},\emptyset)$, we have
$$\text{ind}(C) = 2g(C) - 2 + 2 +CZ(\gamma) + CZ(\overline{\gamma}) = 1 + 1 = 2.$$
Since $C$ is embedded, $I(C) = \text{ind}(C)$ must hold by the \emph{Index inequality} in \cite[p. 41]{hutchings2014lecture}. We can take $C$ as being an element in $\mathcal{M}^J(\gamma\overline{\gamma},\emptyset)$ such that the projection $\pi_{S_gS^2}(C)$ is the unique page whose $y$ lies in the interior and, by translating in the $\R$ component, we can suppose that $(0,y) \in C$. Hence, $C$ is a curve counted in the coefficient $\langle U_{y,J} (\gamma \overline{\gamma}), \emptyset \rangle$.

We claim that there is no other curve counted in this coefficient. Indeed, let $C^\prime$ be a $J$-holomorphic curve counted in $\langle U_{y,J} (\gamma \overline{\gamma}), \emptyset \rangle$. By the definition of the $U$ map, $C^\prime$ is an element in $\mathcal{M}^J(\gamma\overline{\gamma},\emptyset)$ such that $I(C^\prime) = 2$ and $(0,y) \in C^\prime$. Since $1/4 < K \leq 1$, the contact form $\lambda_g$ is dynamically convex and then satisfies the hypotheses in Propositions \ref{curves} and \ref{gss2}. Thus, Proposition \ref{gss} guarantees that the projection $\pi_{S_gS^2}(C^\prime)$ is an annulus-like global surface of section. In this case, we have $y \in \pi_{S_gS^2}(C) \cap \pi_{S_gS^2}(C^\prime)$, and hence, $C^\prime$ must be equal to $C$. This equality holds because \cite[Proposition 3.5]{hryniewicz2022genus} ensures that the projection of two curves in $\mathcal{M}^J(\gamma\overline{\gamma},\emptyset)$ are either equal or disjoint. Therefore, $C$ is the unique curve counted in the coefficient $\langle U_{y,J} (\gamma \overline{\gamma}), \emptyset \rangle$, yielding $U_{y,J}(\gamma \overline{\gamma}) = \emptyset$. In particular, it follows from Proposition \ref{Umap} that $\gamma \overline{\gamma}$ represents a nonzero class in homology (the generator $\zeta_2$ of $ECH_2(S_gS^2,\lambda_g,0,J)$).

By the definition of $c_1(S_gS^2,\lambda_g)$ in the nondegenerate case, we conclude $$c_1(S_gS^2,\lambda_g) \leq \mathcal{A}(\gamma \overline{\gamma}) = \mathcal{A}^0_{min}(\lambda_g),$$ and this finishes the proof for the bumpy case.

For the general case, let $\lambda$ be the degenerate contact form on $\R P^3$ corresponding to $\lambda_g$ on $S_gS^2$, that is, $(\R P^3,\lambda)$ is strictly contactomorphic to $(S_gS^2,\lambda_g)$. Recall that by definition in the degenerate case in \eqref{cklim}, we have
$$c_1(S_gS^2,\lambda_g) = c_1(\R P^3,\lambda) = \lim_{n \to \infty} c_1(\R P^3,f_n\lambda),$$
where $f_n$ is any sequence of positive functions such that $f_n\lambda$ are nondegerate contact forms and $\lim_{n \to \infty} f_n = 1$ in the $C^0$ topology. Note that by the bumpy metrics theorem \cite[Theorem 1]{anosov1983generic}, there must exist a sequence of $1/4$-pinched bumpy metrics $\{g_n\}$ converging to $g$ in the $C^\infty$ topology. In particular, if $f_{g_n} \colon\R P^3 \to \R_{>0}$ is the function such that $(\R P^3,f_{g_n}\lambda) \cong (S_{g_n}S^2,\lambda_{g_n})$ for each $n$, we have $\lim_{n \to \infty} f_{g_n} = 1$ in the $C^0$ topology provided the convergence $g_n \to g$. Hence, it is enough to compute the limit
$$\lim_{n \to \infty} c_1(\R P^3,f_n\lambda) = \lim_{n \to \infty} c_1(S_{g_n}S^2,\lambda_{g_n}) = \lim_{n \to \infty} 2 L_n,$$
where the last equality follows from the proof above for the bumpy case and $L_n$ denotes the length of a shortest geodesic for $g_n$.

By the pinching condition, Lemma \ref{a0=2Llemma} confirms that $2\pi \leq L_n < 4\pi$, for each $n$, and thus, there exists a subsequence of $\{L_n\}_{n \in \mathbb{N}}$ converging to $L \geq 2\pi >0$. Lemma \ref{substoL} ensures that $L$ is the length of a shortest geodesic for $g$. Putting all these together, we conclude that
$$ c_1(S_gS^2,\lambda_g) = \lim_{n \to \infty} c_1(S_{g_n}S^2,\lambda_{g_n}) = \lim_{n \to \infty} 2 L_n = 2L = \mathcal{A}_{min}^0(\lambda_g),$$
using again Lemma \ref{a0=2Llemma} to obtain the last equality.
\end{proof}

\subsection{ECH of irrational Katok example}\label{subseckatok}
Now we study the ECH of irrational Katok metrics and compute its ECH spectrum. First, we follow \cite{ziller1983geometry} and summarize Katok's example. Let $g_0$ be the round metric on $S^2 \subset \R ^3$, $a \in \R$, and consider the Hamiltonian $H_a \colon T^*S^2 \to \R$ defined by
$$H_a(p) = \Vert p \Vert_{g_0}^* + a p(\partial_\theta),$$
where $\Vert p \Vert_{g_0}^*$ is the dual norm (with respect to the norm induced by $g_0$), and $\partial_\theta$ is the Killing vector field generating the rotations around $z$-axis on $S^2 \subset \R^3$. Namely, $\partial_\theta = (-y \partial_x + x \partial_y)$ in cartesian coordinates $(x,y,z) \in \R ^3$. Consider the Legendre transformation associated to $\frac{1}{2}H_a^2$:
\begin{eqnarray*}
\mathcal{L}_{\frac{1}{2}H_a^2} \colon T^*S^2 &\to& TS^2 \\ p &\mapsto& H_a(p)\left(\frac{(g^b_0)^{-1}(p)}{\Vert p \Vert^*}+a\partial_\theta\right).
\end{eqnarray*}
Here $g_0^b$ denotes the usual bundle isomorphism
\begin{eqnarray*}
g_0^b\colon TS^2 &\to& T^*S^2 \\ v &\mapsto& g_0(v,\cdot)
\end{eqnarray*}
induced by the round metric $g_0$. Moreover, it is straightforward to check that 
$$F_a := H_a \circ \mathcal{L}_{\frac{1}{2}H_a^2}$$
is a Finsler metric when $\vert \alpha \vert <1$. This metric can be interpreted as the metric obtained by perturbing the round metric $g_0$ on $S^2$ in the direction of the \emph{wind} $\partial_\theta$, meaning that distances are now computed considering a small contribution in favor of a rotation around the $z$-axis in $\R^3$. 

One can check that for $a \in \R\backslash \mathbb{Q} \cap (0,1)$, the closed geodesics for $F_a$ are exactly the closed geodesics for the round metric $g_0$ which are invariant by the rotations around $z$-axis. Hence, after the wind $\partial_\theta$ perturbation, the only closed geodesics of $g_0$ that survive are the equators. In this case, $F_a$ has exactly two closed geodesics $c_1,c_2$ with lengths $2\pi/(1+a)$ and $2\pi/(1-a)$, corresponding to the equator traversed in or opposite to the direction of the rotation, respectively. Moreover, the linear Poincaré maps $P_{c_1}, P_{c_2}$ corresponding to these geodesics are conjugated to rotations with angle $2\pi/(1+a)$ and $2\pi/(1-a)$, respectively. From now on, we fix $a \in \R\backslash \mathbb{Q} \cap (0,1)$.

\subsubsection{The chain complex}
Translating the latter facts to the contact topology side, we get  
the $3$-dimensional closed contact manifold $(S_{F_a}S^2, \lambda_{F_a})$ admitting exactly two elliptic Reeb orbits $\gamma_1, \gamma_2$ with actions $2\pi/(1+a)$ and $2\pi/(1-a)$, respectively. Further, there is a symplectic global trivialization $\tau$ of the contact structure $\xi_{F_a} := \ker \lambda_{F_a}$ on $S_{F_a}S^2$ such that
$$CZ_\tau(\gamma_1^k) = 2 \left \lfloor \frac{k}{1+a} \right \rfloor +1 \quad \text{and} \quad CZ_\tau(\gamma_2) = 2 \left \lfloor \frac{k}{1-a} \right \rfloor +1.$$
Since $\lambda_{F_a}$ is nondegenerate, the ECH chain complex $ECC_*(S_{F_a}S^2,\lambda_{F_a},\Gamma,J)$ is well defined for a generic symplectization-admissible almost complex structure on $\R \times S_{F_a}S^2$. Moreover, since the Reeb orbits $\gamma_1$ and $\gamma_2$ are elliptic, the index parity property of ECH index \cite[Proposition 1.6]{hutchings2002index} yields that the ECH index between two generators is always an even number. Thus, the differential
$$\partial \colon ECC_*(S_{F_a}S^2,\lambda_{F_a},\Gamma,J) \to ECC_*(S_{F_a}S^2,\lambda_{F_a},\Gamma,J)$$
vanishes for any $J$ and, therefore, the homology $ECH_*(S_{F_a}S^2,\lambda_{F_a},\Gamma)$ agrees with its chain complex and is generated by orbit sets $\alpha = \{(\gamma_1,m_1),(\gamma_2,m_2)\}$ such that
$$[\alpha] = m_1 [\gamma_1] + m_2 [\gamma_2] = \Gamma \in H_1(S_{F_a}S^2;\Z) \cong \Z_2.$$
This last condition is equivalent to $m_1 +m_2 \equiv \Gamma \mod 2$ identifying $\Gamma \in \Z_2$ since the projections of the Reeb orbits are simple closed geodesics on the sphere $S^2$, and so, they cannot be null-homologous on $S_{F_a}S^2$.

\subsubsection{Grading by ECH Index}
We now define an absolute $\Z$ grading on $ECH_*(S_{F_a}S^2,\lambda_{F_a},\Gamma)$. Recall that $\xi_{F_a}$ is a trivial contact structure and so $c_1(\xi) + 2 PD(\Gamma) = 0$ for each $\Gamma$, where $PD(\Gamma)$ denotes the Poincaré dual of $\Gamma$. In this case,
$$\vert \alpha \vert := I(\alpha, \emptyset) \quad \text{and} \quad \vert \beta \vert := I(\beta, \gamma_1)$$
define absolute $\Z$ gradings on $ECH_*(S_{F_a}S^2,\lambda_{F_a},\Gamma)$ for $\Gamma = \overline{0}$ and $\Gamma = \overline{1}$, respectively.

\begin{lemma}\label{gradingkatok}
The gradings defined above are given by
\begin{equation}
\vert \gamma_1^{m_1} \gamma_2^{m_2} \vert = 2 \left( \frac{m_1+m_2}{2} - \frac{m_1^2}{4} + \frac{m_1 m_2}{2} - \frac{m_2^2}{4} + \sum_{k=1}^{m_1} \left \lfloor \frac{k}{1+\alpha} \right \rfloor + \sum_{k=1}^{m_2} \left \lfloor \frac{k}{1-\alpha} \right \rfloor \right), \label{gradingnull}
\end{equation}
when $m_1+m_2 \equiv 0 \mod 2$, and
\begin{equation}
\vert \gamma_1^{n_1} \gamma_2^{n_2} \vert = 2\left(\frac{(n_1+n_2)}{2} -\frac{n_1^2}{4} + \frac{n_1n_2}{2} -\frac{n_2^2}{4} -\frac{1}{2} + \sum_{k=1}^{n_1} \left \lfloor \frac{k}{1-a} \right \rfloor + \sum_{k=1}^{n_2}\left \lfloor \frac{k}{1-a} \right \rfloor\right),
\end{equation}
when $n_1+n_2 \equiv 1 \mod 2$.
\begin{proof}
Let $\alpha = \gamma_1^{m_1} \gamma_2^{m_2} \in \Gamma = 0$, i.e., $m_1+m_2$ is even. Since $H_2(S_{F_a}S^2;\Z) \cong H_2(\R P^3;\Z) = 0$, there exists a unique class $Z \in H_2(S_{F_a}S^2,\alpha,\emptyset)$. We shall use the global trivialization $\tau$ of $\xi_{F_a}$ mentioned above, and hence, the term $c_\tau$ vanishes identically. Similarly to the proof of \cite[Proposition 4.4]{ferreira2021symplectic}, we compute
\begin{eqnarray*}
Q_\tau(Z) &=& \frac{1}{4}Q_\tau(2Z) = \frac{1}{4}(m_1^2 sl(\gamma_1^2) + 2m_1m_2 lk(\gamma_1,\gamma_2) + m_2^2 sl(\gamma_2^2)) \\ &=& \frac{1}{4}(-2m_1^2 + 4m_1m_2 - 2m_2^2) = \frac{-m_1^2}{2}+m_1m_2 + \frac{-m_2^2}{2}.
\end{eqnarray*}
Therefore,
\begin{eqnarray*}
\vert \gamma_1^{m_1} \gamma_2^{m_2} \vert &=& Q_\tau(Z) + \sum_{k=1}^{m_1}CZ_\tau(\gamma_1^k) + \sum_{k=1}^{m_2} CZ_\tau(\gamma_2^k) \\ &=& \frac{-m_1^2}{2}+m_1m_2 + \frac{-m_2^2}{2} + \sum_{k=1}^{m_1} 2 \left \lfloor \frac{k}{1+a} \right \rfloor +1 + \sum_{k=1}^{m_2} 2 \left \lfloor \frac{k}{1-a} \right \rfloor +1 \\ &=& \frac{-m_1^2}{2}+m_1m_2 + \frac{-m_2^2}{2} + m_1 + m_2 + \sum_{k=1}^{m_1} 2 \left \lfloor \frac{k}{1+a} \right \rfloor + \sum_{k=1}^{m_2} 2 \left \lfloor \frac{k}{1-a} \right \rfloor \\ &=& 2 \left( \frac{m_1+m_2}{2} - \frac{m_1^2}{4} + \frac{m_1 m_2}{2} - \frac{m_2^2}{4} + \sum_{k=1}^{m_1} \left \lfloor \frac{k}{1+\alpha} \right \rfloor + \sum_{k=1}^{m_2} \left \lfloor \frac{k}{1-\alpha} \right \rfloor \right).
\end{eqnarray*}
For $\beta = \gamma_1^{n_1}\gamma_2^{n_2} \in \Gamma = \overline{1}$, let $W$ be the unique class in $H_2(S_{F_a}S^2,\beta,\gamma_1)$. If $S_0$ is a representative of the class in $H_2(S_{F_a}S^2,\gamma_1^{2n_1-2}\gamma_2^{2n_2},\emptyset)$, we can take $S_0+\R \times \gamma_1^2$ as a representative of $2W$. Then we compute
\begin{eqnarray*}
Q_\tau(W) &=& \frac{1}{4}Q_\tau(2W) = \frac{1}{4}Q_\tau(S_0+\R \times \gamma_1^2) \\ &=& \frac{1}{4}\Big(Q_\tau(S_0) + 2Q_\tau(S_0,\R \times \gamma_1^2)\Big) \\ &=& \frac{1}{4}\Big(-2(n_1-1)^2 + 4(n_1-1)n_2 + -2 n_2^2 + -4(n_1-1) + 4n_2\Big) \\ &=& \frac{-n_1^2}{2} +n_1n_2 + -\frac{n_2^2}{2}
\end{eqnarray*}
since $Q_\tau(S_0) = (n_1-1)^2sl(\gamma_1^2) + 2(n_1-1)n_2lk(\gamma_1^2,\gamma_2^2) + n_2^2sl(\gamma_1^2)$ and
$$Q_\tau(S_0,\R \times \gamma_1^2) = (n_1-1)sl(\gamma_1^2) + n_2lk(\gamma_1^2,\gamma_2^2).$$
Thus the grading is given by
\begin{eqnarray*}
\vert \gamma_1^{n_1}\gamma_2^{n_2} \vert &=& Q_\tau(W) + \sum_{k=1}^{n_1}CZ_\tau(\gamma_1^k) + \sum_{k=1}^{n_2} CZ_\tau(\gamma_2^k) - CZ_\tau(\gamma_1) \\ &=& -\frac{n_1^2}{2} +n_1n_2  -\frac{n_2^2}{2} + \sum_{k=1}^{n_1}\left(2 \left \lfloor \frac{k}{1+a} \right \rfloor +1\right) + \sum_{k=1}^{n_2}\left(2 \left \lfloor \frac{k}{1-a} \right \rfloor +1\right) - 1\\ &=& -\frac{n_1^2}{2} +n_1n_2 -\frac{n_2^2}{2}  +n_1  +n_2 + \sum_{k=1}^{n_1}2 \left \lfloor \frac{k}{1+a} \right \rfloor + \sum_{k=1}^{n_2}2 \left \lfloor \frac{k}{1-a} \right \rfloor -1 \\ &=& 2\left(\frac{(n_1+n_2)}{2} -\frac{n_1^2}{4} + \frac{n_1n_2}{2} -\frac{n_2^2}{4} -\frac{1}{2} + \sum_{k=1}^{n_1} \left \lfloor \frac{k}{1-a} \right \rfloor + \sum_{k=1}^{n_2}\left \lfloor \frac{k}{1-a} \right \rfloor\right).
\end{eqnarray*}
\end{proof}
\end{lemma}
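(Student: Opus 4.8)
The plan is to unwind the definition $I(Z) = c_\tau(Z) + Q_\tau(Z) + CZ_\tau^I(\alpha) - CZ_\tau^I(\beta)$ of the ECH index against the reference orbit sets $\emptyset$ (for $\Gamma = \overline{0}$) and $\gamma_1$ (for $\Gamma = \overline{1}$), using the global trivialization $\tau$ of $\xi_{F_a}$ fixed above. Two simplifications are immediate. First, since $\tau$ is a global trivialization of the trivial bundle $\xi_{F_a}$, the relative Chern class $c_\tau(Z)$ vanishes for every relevant class $Z$. Second, since $H_2(S_{F_a}S^2;\Z)\cong H_2(\R P^3;\Z) = 0$, the affine spaces $H_2(S_{F_a}S^2,\alpha,\emptyset)$ and $H_2(S_{F_a}S^2,\beta,\gamma_1)$ each consist of a single class, so the grading is a genuine integer and $Q_\tau$ may be evaluated on any convenient representative. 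The Conley--Zehnder contributions are then read directly off $CZ_\tau(\gamma_1^k) = 2\lfloor k/(1+a)\rfloor + 1$ and $CZ_\tau(\gamma_2^k) = 2\lfloor k/(1-a)\rfloor + 1$, and the constant terms $+1$ in these expressions are what will assemble into the $\tfrac{m_1+m_2}{2}$-type summand of the final formula.

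The core of the argument is the computation of the relative self-intersection number $Q_\tau$. For $\alpha = \gamma_1^{m_1}\gamma_2^{m_2}$ with $m_1+m_2$ even, I would use that $Q_\tau$ is a quadratic function on the affine space of relative classes together with the doubling trick: the class $2Z$ lies over $\gamma_1^{2m_1}\gamma_2^{2m_2}$, and since each of $\gamma_1^{2},\gamma_2^{2}$ is null-homologous, $2Z$ decomposes as $m_1 W_1 + m_2 W_2$ with $W_i \in H_2(S_{F_a}S^2,\gamma_i^{2},\emptyset)$; expanding the quadratic form, $Q_\tau(2Z) = m_1^2\, sl(\gamma_1^{2}) + 2m_1m_2\, lk(\gamma_1^{2},\gamma_2^{2}) + m_2^2\, sl(\gamma_2^{2})$, whence $Q_\tau(Z) = \tfrac14 Q_\tau(2Z)$. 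The numbers $sl(\gamma_i^{2})$ and $lk(\gamma_1^{2},\gamma_2^{2})$ are the only genuinely geometric inputs; they are determined by realizing $\gamma_1,\gamma_2$ as the two lifts of a simple closed geodesic to the unit tangent bundle $S_{F_a}S^2\cong\R P^3$, exactly as in the computation of \cite[Proposition 4.4]{ferreira2021symplectic}, giving $sl(\gamma_i^{2}) = -2$ and $lk(\gamma_1^{2},\gamma_2^{2}) = 2$. Substituting and adding $\sum_{k=1}^{m_1}CZ_\tau(\gamma_1^k) + \sum_{k=1}^{m_2}CZ_\tau(\gamma_2^k)$ yields formula \eqref{gradingnull}.

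For $\Gamma = \overline{1}$ with $\beta = \gamma_1^{n_1}\gamma_2^{n_2}$, $n_1+n_2$ odd, the same scheme applies, but the reference orbit $\gamma_1$ must be absorbed: the unique class $W \in H_2(S_{F_a}S^2,\beta,\gamma_1)$ has $2W$ representable as $S_0 + (\R\times\gamma_1^{2})$, where $S_0$ lies over the now null-homologous orbit set $\gamma_1^{2n_1-2}\gamma_2^{2n_2}$ and $\R\times\gamma_1^{2}$ is the trivial cylinder. Expanding $Q_\tau(2W) = Q_\tau(S_0) + 2Q_\tau(S_0,\R\times\gamma_1^{2}) + Q_\tau(\R\times\gamma_1^{2})$, evaluating $Q_\tau(S_0)$ and the mixed term through the same self-linking and linking data (with the shifted exponent $n_1-1$ in the $\gamma_1$ factor), and then subtracting $CZ_\tau(\gamma_1)$ from the Conley--Zehnder sum, produces the second formula.

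I expect the main obstacle to be the bookkeeping in the $\Gamma = \overline{1}$ case: getting the coefficient $n_1-1$ versus $n_1$ right in the decomposition of $2W$, correctly evaluating the trivial-cylinder contributions $Q_\tau(S_0,\R\times\gamma_1^{2})$ and $Q_\tau(\R\times\gamma_1^{2})$ relative to $\tau$, and tracking the additive constants so that the final expression is integer-valued on all admissible pairs. The geometric inputs $sl(\gamma_i^{2})$ and $lk(\gamma_1^{2},\gamma_2^{2})$ are the other delicate point, but these are already available from the cited prequantization-bundle picture and need only be transcribed into the present setting.
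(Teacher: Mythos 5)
Your outline reproduces the paper's proof: same reference classes ($\emptyset$ for $\Gamma=\overline{0}$ and $\gamma_1$ for $\Gamma=\overline{1}$), same doubling trick to evaluate $Q_\tau$ via $Q_\tau(Z)=\tfrac14 Q_\tau(2Z)$, same decomposition $2W = S_0 + \R\times\gamma_1^2$ with the shifted exponent $n_1-1$, and the same geometric inputs $sl(\gamma_i^2)=-2$, $lk(\gamma_1^2,\gamma_2^2)=2$ imported from \cite[Proposition 4.4]{ferreira2021symplectic}. This is the paper's argument; carrying out the arithmetic you sketch gives exactly the stated formulas.
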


Now we are ready to compute the ECH groups for the irrational Katok metric on the sphere. 

\begin{prop}\label{echkatok}
The ECH chain complex of the unit tangent bundle of the sphere for a irrational Katok metric $F_a$ is given by
\begin{align*}
ECC_*(S_{F_a}S^2,\lambda_{F_a},\Gamma) = \begin{cases}
\Z_2, & \text{if} \ * \in 2\Z_{\geq 0} \\
0, & \text{else}
\end{cases}
\end{align*}
for each $\Gamma \in H_1(S_{F_a}S^2;\Z) \cong \Z_2$.
\begin{proof}
It is readily verified that both gradings in Lemma \ref{gradingkatok} are always positive even integers. We claim that these give bijections between the generators and $2\Z_{\geq 0}$ for the two homology classes in $H_1(S_{F_a}S^2;\Z)$, and this is enough to prove this proposition. Note that half of the grading in \eqref{gradingnull} yields the map
$$f_a \colon (m_1,m_2) \mapsto  \frac{m_1+m_2}{2} - \frac{m_1^2}{4} + \frac{m_1 m_2}{2} - \frac{m_2^2}{4} + \sum_{k=1}^{m_1} \left \lfloor \frac{k}{1+a} \right \rfloor + \sum_{k=1}^{m_2} \left \lfloor \frac{k}{1-a} \right \rfloor,$$
for $m_1,m_2$ nonnegative integers and such that $m_1+m_2$ is even. Note that
$$\left \lfloor \frac{k}{1+a} \right \rfloor = \left \lfloor k - \frac{ka}{1+a} \right \rfloor = k-\left \lfloor \frac{ka}{1+a} \right \rfloor -1,$$
and similarly
$$\left \lfloor \frac{k}{1-a} \right \rfloor = \left \lfloor k + \frac{ka}{1-a} \right \rfloor = k+\left \lfloor \frac{ka}{1-a} \right \rfloor.$$
Then, we compute
\begin{eqnarray*}
f_a(m_1,m_2) &=&  \frac{m_1+m_2}{2} - \frac{m_1^2}{4} + \frac{m_1 m_2}{2} - \frac{m_2^2}{4} + \frac{m_1^2+m_1}{2} - m_1 - \sum_{k=1}^{m_1} \left \lfloor \frac{ka}{1+a} \right \rfloor \\ &+& \frac{m_2^2+m_2}{2} + \sum_{k=1}^{m_2} \left \lfloor \frac{ka}{1-a} \right \rfloor \\ &=& \left(\frac{m_1+m_2}{2}\right)^2 + m_2 - \sum_{k=1}^{m_1} \left \lfloor \frac{ka}{1+a} \right \rfloor + \sum_{k=1}^{m_2} \left \lfloor \frac{ka}{1-a} \right \rfloor.
\end{eqnarray*}
Let $n:= (m_1+m_2)/2$ and $m:=m_2$. Under this transformation, we have
\begin{eqnarray}\label{fbijekatok}
f_a\colon D:=\{(n,m) \in \Z^2_{\geq 0} \mid m \leq 2n\} &\to& \Z_{\geq 0} \\ (n,m) &\mapsto& n^2 + m - \sum_{k=1}^{2n-m} \left \lfloor \frac{ka}{1+a} \right \rfloor + \sum_{k=1}^{m} \left \lfloor \frac{ka}{1-a} \right \rfloor. \nonumber
\end{eqnarray}
\newline
\noindent\textbf{Claim}: Let $(n,m) \in D \subset \R^2$. Then $f_a(n,m)+1$ is the number of lattice points in $D$ below the line of slope $-(1-a)/a$ passing through the point $(n,m)$.
\begin{proof}[Proof of the Claim]
Let $D_a(n,m) \subset D$ be the subset consisting in points in $D$ lying below the line of slope $-(1-a)/a$ passing through the point $(n,m)$. The number of lattice points in $D_a(n,m)$ can be computed by
\begin{equation}\label{latticepts}
\mathcal{L}(D_a(n,m)) = \mathcal{L}(T_1) + \mathcal{L}(T_2) - \mathcal{L}(T_3) + \mathcal{L}(S),
\end{equation}
where $\mathcal{L}(A) = \# (A \cap \Z^2)$ denotes the number of lattice points in a subset $A \subset \R^2$, and the subsets $T_i \subset \R^2$, for $i=1,2,3$ and $S \subset \R^2$ are defined as follows. The subset $S$ is the line segment from $(n,0)$ to $(n,m)$. The triangle $T_1$ is delimited by the $x$-axis, the line $y=2x$ and (not including) the line $x=n$. The triangle $T_2$ is delimited by (not including) the line segment $S$, the line of slope $-(1-a)/a$ passing through the point $(n,m)$ and the $x$-axis. Finally, $T_3$ is the triangle delimited by the line $y=2x$, the line of slope $-(1-a)/a$ passing through the point $(n,m)$ and (not including) the line $x=n$, see Figure \ref{figlattice}.

\begin{figure}[H]
\centering
\includegraphics[scale=0.6]{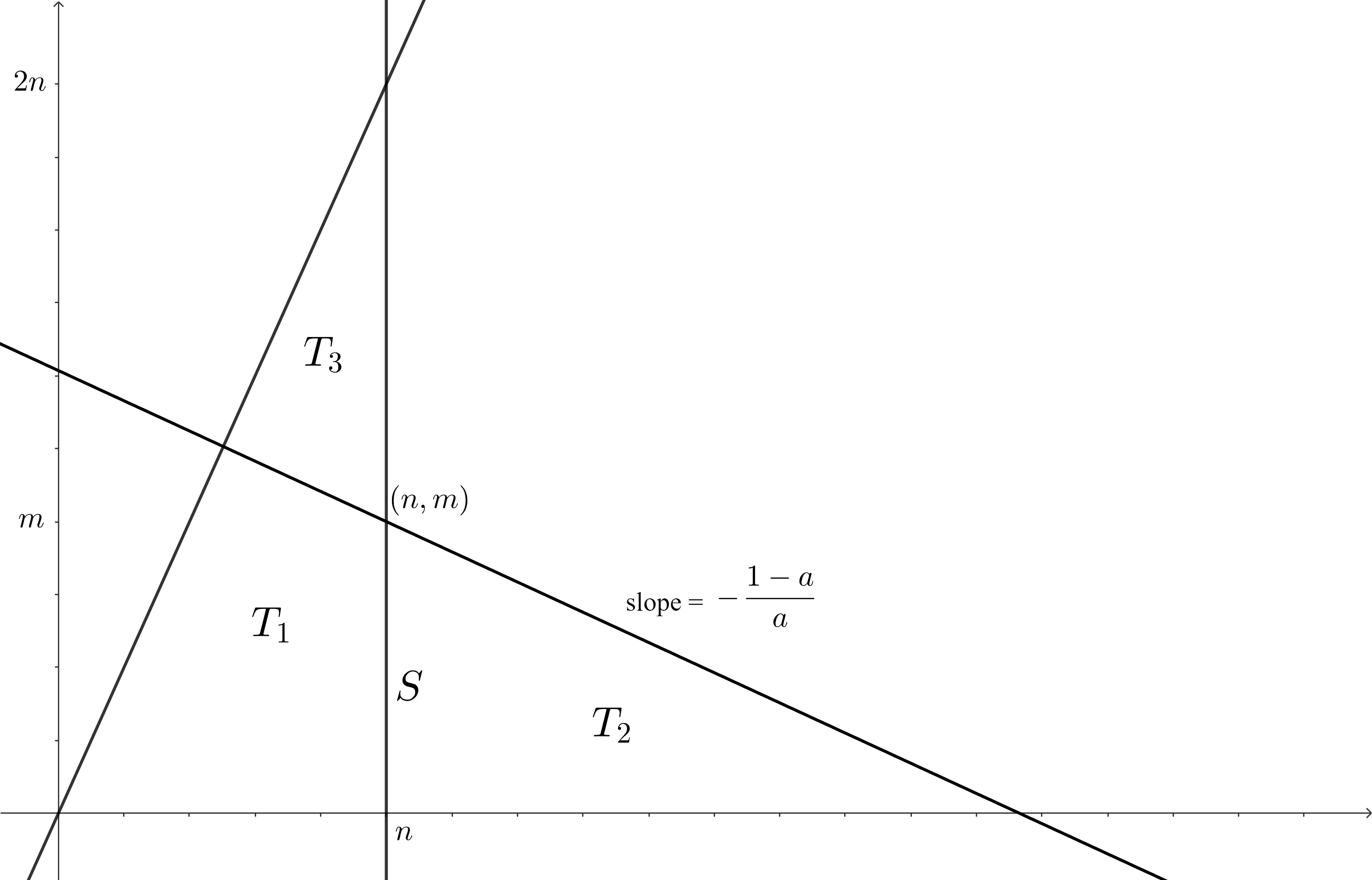}
\caption{Subsets $T_i \in \R^2$ and $S \in \R^2$.}\label{figlattice}
\end{figure}

Now we compute the number of lattice points in each of these sets. It is simple to check that
\begin{eqnarray}
\mathcal{L}(T_1) &=& 1+3+\ldots+2n-1 = n^2 \label{lattice1} \\ \mathcal{L}(S) &=& m+1 \label{latticeS} \\ \mathcal{L}(T_2) &=& \sum_{k=1}^m \left \lfloor k \frac{1}{\frac{1-a}{a}} \right \rfloor = \sum_{k=1}^m \left \lfloor \frac{ka}{1-a} \right \rfloor \label{lattice2}.
\end{eqnarray}
To compute $\mathcal{L}(T_3)$, we first apply the $SL(2,\Z)$ transformation
$\begin{bmatrix}
-1 & 0 \\ 2 & -1
\end{bmatrix}$
and then the translation by $(n,0)$ to the triangle $T_3$. So $\mathcal{L}(T_3)$ agrees with the number of lattice points in the new triangle delimited by the coordinate axis and the line with slope $-(1+a)/a$ passing through $(0,2n-m)$. Hence,
\begin{equation} \label{lattice3}
\mathcal{L}(T_3) = \sum_{k=1}^{2n-m} \left \lfloor \frac{ka}{1+a} \right \rfloor
\end{equation}
and putting \eqref{latticepts}, \eqref{lattice1}, \eqref{latticeS}, \eqref{lattice2} and \eqref{lattice3} together, we obtain $\mathcal{L}(D_a(n,m)) = f(n,m) + 1$, proving the claim.
\end{proof}
Since $a$ is irrational, the slope $-(1-a)/a$ is also irrational and hence, for each $j \in \Z_{\geq 1}$ there exists a unique $(n,m)$ such that $j=\mathcal{L}(D_a(n,m))$. Therefore, the fact $\mathcal{L}(D_a(n,m)) = f_a(n,m) + 1$ ensures that $f_a$ defined in \eqref{fbijekatok} is a bijection. Thus, the grading in \eqref{gradingnull} factors through the bijections
$$\{\gamma_{p_1}^{m_1}\gamma_{p_2}^{m_2}\mid m_1+m_2 \in 2\Z_{\geq 0}\} \xrightarrow[m=m_2]{n=(m_1+m_2)/2} \{(n,m) \in \Z^2_{\geq 0}\mid m\leq 2n \} \xrightarrow{2f} 2\Z_{\geq 0}.$$
This concludes the proof for $\Gamma = 0 \in H_1(S_{F_a}S^2;\Z)$. One can deal with the case $\Gamma = \overline{1}$ analogously.
\end{proof}
\end{prop}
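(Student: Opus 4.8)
The plan is to reduce the statement to a purely combinatorial bijection and then establish that bijection by a lattice-point count. First I would use that $(S_{F_a}S^2,\lambda_{F_a})$ has exactly two simple Reeb orbits $\gamma_1,\gamma_2$, both elliptic: since elliptic orbits may appear with arbitrary multiplicity in admissible orbit sets, the chain group $ECC_*(S_{F_a}S^2,\lambda_{F_a},\Gamma)$ is freely generated over $\Z_2$ by the orbit sets $\gamma_1^{m_1}\gamma_2^{m_2}$ with $m_1+m_2\equiv\Gamma\pmod 2$. Because the ends of any candidate differential curve are elliptic, the index parity property of the ECH index forces the ECH index between any two generators to be even, so the differential vanishes for every admissible $J$ and $ECH_*=ECC_*$. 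Thus it suffices to prove that the absolute $\Z$-grading of Lemma \ref{gradingkatok} restricts, on the generators of a fixed class $\Gamma$, to a bijection onto $2\Z_{\geq0}$.

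For $\Gamma=0$ I would simplify the half-grading $f_a(m_1,m_2)$ coming from Lemma \ref{gradingkatok}. Using the identities $\lfloor k/(1+a)\rfloor = k-\lfloor ka/(1+a)\rfloor-1$ and $\lfloor k/(1-a)\rfloor = k+\lfloor ka/(1-a)\rfloor$, the formula collapses to
\[
f_a(m_1,m_2) = \left(\tfrac{m_1+m_2}{2}\right)^2 + m_2 - \sum_{k=1}^{m_1}\left\lfloor\tfrac{ka}{1+a}\right\rfloor + \sum_{k=1}^{m_2}\left\lfloor\tfrac{ka}{1-a}\right\rfloor .
\]
The substitution $n=(m_1+m_2)/2$, $m=m_2$ identifies the generators in class $0$ with the lattice points of $D=\{(n,m)\in\Z_{\geq0}^2 : m\leq 2n\}$ and $f_a$ with the map $(n,m)\mapsto n^2+m-\sum_{k=1}^{2n-m}\lfloor ka/(1+a)\rfloor+\sum_{k=1}^{m}\lfloor ka/(1-a)\rfloor$, so the task becomes: this map is a bijection $D\to\Z_{\geq0}$.

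The heart of the argument is the claim that $f_a(n,m)+1$ equals the number of lattice points of $D$ lying on or below the line of slope $-(1-a)/a$ through $(n,m)$. I would prove it by slicing that region into the triangle $T_1$ bounded by the axes and $y=2x$ and not reaching $x=n$, whose lattice count telescopes to $1+3+\cdots+(2n-1)=n^2$; the vertical segment from $(n,0)$ to $(n,m)$, contributing $m+1$; the triangle $T_2$ below the slanted line, above the $x$-axis, to the right of that segment, with $\sum_{k=1}^m\lfloor ka/(1-a)\rfloor$ points; minus the triangle $T_3$ cut off above $y=2x$, which after the $SL(2,\Z)$ substitution $\left(\begin{smallmatrix}-1&0\\ 2&-1\end{smallmatrix}\right)$ and a translation by $(n,0)$ becomes a standard triangle under a line of slope $-(1+a)/a$, hence $\sum_{k=1}^{2n-m}\lfloor ka/(1+a)\rfloor$ points. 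Summing these yields exactly $f_a(n,m)+1$. The main obstacle is making this decomposition genuinely exact: one must track which boundary lines are included so that lattice points on $y=2x$, on $x=n$, and on the slanted line are each counted precisely once; this is the first place where the irrationality of $a$ enters, since no lattice point other than a vertex can lie on a line of slope $-(1-a)/a$.

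Once the counting identity is established, irrationality of the slope $-(1-a)/a$ forces the lines of that slope through distinct lattice points of $D$ to contain distinct numbers of points of $D$ on or below them, with every value in $\Z_{\geq1}$ attained exactly once; equivalently the assignment $(n,m)\mapsto f_a(n,m)$ is a bijection $D\to\Z_{\geq0}$, so the grading $2f_a$ is a bijection onto $2\Z_{\geq0}$, which gives the claimed form of $ECC_*$ in the class $\Gamma=0$. For $\Gamma=\bar1$ I would repeat the argument with the second grading of Lemma \ref{gradingkatok} and the corresponding substitution; the combinatorial model and the lattice count are identical up to the shift produced by the reference generator $\gamma_1$, and again produce a bijection onto $2\Z_{\geq0}$, completing the proof.
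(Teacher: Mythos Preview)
Your proposal is correct and follows essentially the same route as the paper: the same floor identities, the same substitution $n=(m_1+m_2)/2,\ m=m_2$, the same interpretation of $f_a(n,m)+1$ as a lattice-point count below a line of slope $-(1-a)/a$, the same decomposition into $T_1$, $T_2$, $T_3$, $S$, and the same $SL(2,\Z)$ matrix to handle $T_3$. Your explicit remark about tracking boundary inclusions so that each lattice point is counted once is a welcome bit of extra care, but otherwise the two arguments coincide.
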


Since the differential vanishes in the chain complex $ECC_*(S_{F_a}S^2,\lambda_{F_a},\Gamma)$, Proposition \ref{echkatok} gives also the computation of the ECH groups and, using the invariance due to Taubes, this recovers Theorem \ref{echrp3}. Now we are ready to compute the ECH spectrum of $(S_{F_a}S^2,\lambda_{F_a})$, proving Theorem \ref{katokspectrum}.

\begin{proof}[Proof of Theorem \ref{katokspectrum}]
By Proposition \ref{echkatok}, for any integer $k>0$,
$$ECH_{2k}(S_{F_a}S^2,\lambda_{F_a},0) \cong \Z_2$$
has exactly one generator. Let $\zeta_k$ be this generator. Since the $U$ map does not depend on the contact form by Theorem \ref{taubes}, then Theorem \ref{Umap} still holds in this case, i.e., $U\zeta_k = \zeta_{k-1}$, for $k\geq 1$. Moreover, a generator is an orbit set $\alpha = \gamma_1^{m_1}\gamma_2^{m_2}$, where $m_1+m_2$ is even, and as such, it has total action given by
$$\mathcal{A}(\alpha) = m_1\frac{2\pi}{1+a}+m_2\frac{2\pi}{1-a}.$$
Thus, the result follows from the fact that the $U$ map decreases the action.
\end{proof}

As a consequence, we recover the ECH spectrum for the unit cotangent bundle of the round sphere first computed in \cite[Theorem 1.4]{ferreira2021symplectic}, as follows. We have the following strict contactomorphisms
$$(S_{F_a}S^2,\lambda_{F_a}) \cong (S^*_{F_a}S^2,\lambda_{taut}) \cong (S^*S^2,\mathrm{f}_a\lambda_{taut}),$$
where $\mathrm{f}_a\lambda_{taut}$ is the contact form on the unit cotangent bundle of the round sphere $S^*S^2$ corresponding to the restriction of the tautological form on $S^*_{F_a}S^2$. Therefore, by definition of the ECH spectrum for the degenerate case in \eqref{cklim}, we obtain
\begin{align*}
c_k(S^*S^2,\lambda_{taut}) &= \lim_{n\to \infty} c_k(S^*S^2,\mathrm{f}_{a_n}\lambda_{taut}) \\ &= \lim_{n\to \infty}c_k(S_{F_{a_n}}S^2,\lambda_{F_{a_n}}) \\ &= (M_2\left(N(2\pi,2\pi)\right))_k,
\end{align*}
where we used Theorem \ref{katokspectrum} for a sequence $a_n$ of irrational numbers in $(0,1)$ converging to $0$. The same argument yields the conclusion that Theorem \ref{katokspectrum} holds for any real number $a \in [0,1)$.

\bibliographystyle{alpha}
\bibliography{biblio2}
\end{document}